\newcommand{\mathbold}{\mathbb}
\newcommand{\bF}{{\mathbold F}}
\newcommand{\bA}{{\mathbold A}}
\newcommand{\bQ}{{\mathbold Q}}
\newcommand{\bQpl}{{\bQ_+}}
\newcommand{\bZ}{{\mathbold Z}}
\newcommand{\bC}{{\mathbold C}}
\newcommand{\bR}{{\mathbold R}}
\newcommand{\bRpl}{{\bR_+}}
\newcommand{\bRplex}[1]{\bR_{+}^{#1}}
\newcommand{\bN}{{\mathbold N}}
\newcommand{\ord}{{\mathrm{ord}}}
\newcommand{\GL}{{\mathbf{GL}}}
\newcommand{\vbl}{-}
\newcommand{\tn}{\otimes}           % Tensor
\DeclareMathOperator{\tr}{tr}
\DeclareMathOperator{\diag}{diag}
\newcommand{\dfn}{:=}
\newcommand{\setof}[2]{\{#1 \;|\; #2\}}
\newcommand{\bigsetof}[2]{\big\{#1 \;|\; #2\big\}}
\newcommand{\longmap}{{\,\longrightarrow\,}}
\def\longisomap{{\,\buildrel \sim\over\longrightarrow\,}} % an isomorphism
\newcommand{\eps}{\varepsilon}
\DeclareMathOperator{\colim}{\mathrm{colim}}
\newcommand{\Gal}{{\mathrm{Gal}}}
\newcommand{\sO}{{\mathcal{O}}}
\newcommand{\bcp}{\odot}
\newcommand{\Mod}[1]{\mathsf{Mod}_{{#1}}}
\newcommand{\Alg}[1]{\mathsf{Alg}_{{#1}}}
\newcommand{\Hom}{\mathrm{Hom}}
\newcommand{\Sch}{\Lambda^{\mathrm{Sch}}}
\newcommand{\Wsch}{W^{\mathrm{Sch}}}
\newcommand{\anti}[1]{{\{#1\}}}
\newcommand{\ser}{\sigma}
\newcommand{\sergh}{\sigma_{\mathrm{gh}}}
\newcommand{\schurmap}{v}
\newcommand{\Nhat}[1]{\bN[{#1}]{\sphat}\,}
\newcommand{\CC}{\mathsf{C}}
\newcommand{\Aff}{\mathsf{Aff}}
\newcommand{\prt}{P}
\newcommand{\Forg}{\Lambda^{{\mathrm{for}}}}
\newcommand{\smcoprod}{{\,\scriptstyle\amalg\,}}
\newcommand{\id}{{\mathrm{id}}}
\newcommand{\gp}{\mathfrak{p}}
\newcommand{\lttn}[1]{\Lambda_{\bN,(p),{#1}}}
\newcommand{\displaylabelfork}[6]{{	\entrymodifiers={+!!<0pt,\fontdimen22\textfont2>}
	\def\objectstyle{\displaystyle}
\xymatrix{{#1} \ar^-{#2}[r] & {#3} \ar@<0.7ex>^-{#4}[r]\ar@<-0.7ex>_-{#5}[r] & {#6}}}}
\newcommand{\displayfork}[3]{{\displaylabelfork{#1}{}{#2}{}{}{#3}}}
\newtheoremstyle{mythm}{}{}%
  {\itshape}%   Body font
  {}%           Indent amount (empty = no indent, \parindent = para indent)
  {\bfseries}%  Thm head font
  {}%           Punctuation after thm head
  { }%          Space after thm head (\newline = linebreak)
  {\thmnumber{#2.\hspace{1.5mm}}\thmname{#1}\thmnote{ {\mdseries(#3)}}.}%  Thm head spec
\newtheoremstyle{intro}{}{}%
  {\itshape}%	Body font
  {}%         	Indent amount (empty = no indent, \parindent = para indent)
  {\bfseries}% 	Thm head font
  {}%        	Punctuation after thm head
  { }%     	Space after thm head (\newline = linebreak)
  {\thmname{#1}\thmnumber{ #2}\thmnote{ #3}.}% Thm head spec
\newtheorem{question}{Question}
\numberwithin{equation}{subsection}
\theoremstyle{mythm}
\newtheorem{theorem}[subsection]{Theorem}
\newtheorem{proposition}[subsection]{Proposition}
\newtheorem{lemma}[subsection]{Lemma}
\newtheorem{corollary}[subsection]{Corollary}
\theoremstyle{intro}
\newtheorem*{thmintro}{Theorem}
\newcommand{\wis}[1]{{\text{\em \usefont{OT1}{cmtt}{m}{n} #1}}}
\def\Spec{\wis{Spec}}
\title[Witt vectors, semirings, and total positivity]{Witt vectors, semirings, and total positivity}
\author[J.~Borger]{James Borger\thanks{Supported the Australian Research Council
under a Discovery Project (DP120103541) and a Future Fellowship (FT110100728).}}
\begin{document}
\setcounter{page}{273}

\begin{abstract} 
We extend the big and $p$-typical Witt vector functors from commutative rings to commutative
semirings. In the case of the big Witt vectors, this is a repackaging of some standard facts about
monomial and Schur positivity in the combinatorics of symmetric functions. In the $p$-typical case,
it uses positivity with respect to an apparently new basis of the $p$-typical symmetric functions.
We also give explicit descriptions of the big Witt vectors of the natural numbers and of the
nonnegative reals, the second of which is a restatement of Edrei's theorem on totally positive
power series. Finally we give some negative results on the relationship between truncated Witt vectors 
and $k$-Schur positivity, and we give ten open questions.
\end{abstract}

\begin{classification}
Primary 13F35, 13K05;
Secondary 16Y60, 05E05,	14P10.
\end{classification}

\begin{keywords}
Witt vector, semiring, symmetric function, total positivity, Schur positivity.
\end{keywords}

%\date{\today. \printtime}

\maketitle

\setcounter{tocdepth}{2}

\tableofcontents

\section*{Introduction}
Witt vector functors are certain functors from the category of rings (always commutative)
to itself. They come in different flavors, but each of the ones we will consider
sends a ring $A$
to a product $A\times A\times\cdots$ with certain addition and multiplication laws of an exotic
arithmetic nature. For example, for each prime $p$, there is the \emph{$p$-typical} Witt vector 
functor $W_{(p),n}$ of length $n\in \bN\cup\{\infty\}$.
As sets, we have $W_{(p),n}(A)=A^{n+1}$. When $n=1$, the
ring operations are defined as follows:
	\begin{align*}
		(a_0,a_1) + (b_0,b_1) &= (a_0+b_0,\ 
			a_1+b_1 -\sum_{i=1}^{p-1}\frac{1}{p}\binom{p}{i}a_0^ib_0^{p-i}) \\
		(a_0,a_1) (b_0,b_1) &= (a_0 b_0,\  a_0^p b_1 + a_1b_0^p + pa_1b_1) \\
		0 &= (0,0) \\
		1 &= (1,0).
	\end{align*}
For $n\geq 2$, the formulas in terms of coordinates are too complicated to be worth writing down.
Instead we will give some simple examples:
	\begin{align*}
		W_{(p),n}(\bZ/p\bZ) &\cong \bZ/p^{n+1}\bZ, \\
		W_{(p),n}(\bZ) &\cong 
			\bigsetof{\langle{x_0,\dots,x_n}\rangle\in\bZ^{n+1}}{x_i\equiv x_{i+1}\bmod p^{i+1}}.
	\end{align*}
In the second example, the ring operations are performed componentwise; in particular,
the coordinates are not the same as the coordinates above. For another example,
if $A$ is a $\bZ[1/p]$-algebra, then $W_{(p),n}(A)$ is isomorphic after a change of
coordinates to the usual product ring $A^{n+1}$.
This phenomenon holds for other kinds of Witt vectors---when all relevant primes 
are invertible in $A$, then the Witt vector ring of $A$ splits up as a product ring.

The $p$-typical Witt vector functors were discovered by Witt in 1937~\cite{Witt:Vectors} and have
since become a central construction in $p$-adic number theory, especially in $p$-adic Hodge theory,
such as in the construction of Fontaine's period rings~\cite{Fontaine:Corps-des-periodes-p-adiques}
and, via the de~Rham--Witt complex of Bloch--Deligne--Illusie, in crystalline
cohomology~\cite{Illusie:dRW}. Also notable, and related, is their role in the algebraic K-theory
of $p$-adic rings, following Hesselholt--Madsen~\cite{Hesselholt-Madsen:K-theory-of-local-fields}.

There is also the \emph{big} Witt vector 
functor. For most of this chapter, we will think about it from the point of view of symmetric
functions, but for the moment, what is important is that it is formed by composing
all the $p$-typical functors in an inverse limit:
	\begin{equation}
	\label{eq:big-W-factorization}
	W(A) = \lim_n W_{(p_1),\infty}(W_{(p_2),\infty}(\cdots (W_{(p_n),\infty}(A))\cdots)),
	\end{equation}
where $p_1,\dots,p_n$ are the first $n$ primes, and the transition maps are given by projections
$W_{(p),\infty}(A)\to A$ onto the first component. (A non-obvious fact is that the $p$-typical
functors commute with each other, up to canonical isomorphism; so in fact the ordering of the
primes here is unimportant.) This has an adelic flavor, and indeed it is possible to unify the
crystalline cohomologies for all primes using a (``relative'') de Rham--Witt complex for the big 
Witt vectors. However the cohomology of this complex is determined by the crystalline cohomologies 
and the comparison maps to algebraic de Rham cohomology; so the big de Rham--Witt cohomology
does not, on its own, provide any new information.

But because of this adelic flavor, it is natural to ask whether the infinite prime plays a role. The answer is
yes---big Witt vectors naturally accommodate positivity, which we will regard as $p$-adic
integrality at the infinite prime. On the other hand, we will have little to say about other aspects of the
infinite prime, such as archimedean norms or a Frobenius operator at $p=\infty$.

To explain this in more detail, we need to recall some basics of the theory of symmetric functions.
The big Witt vector functor is represented, as a set-valued 
functor, by the free polynomial ring $\Lambda_\bZ=\bZ[h_1,h_2,\dots]$. So as sets, we have
	$$
	W(A) = \Hom_{\Alg{\bZ}}(\Lambda_\bZ, A) = A\times A\times \cdots.
	$$
If we think of $\Lambda_\bZ$ as the usual ring of symmetric functions in infinitely many formal 
variables $x_1,x_2,\dots$ by writing
	$$
	h_n = \sum_{i_1\leq\dots\leq i_n} x_{i_1}\cdots x_{i_n},
	$$
then the ring operations on $W(A)$ are determined by two well-known coproducts
in the theory of symmetric functions. The addition law is determined by the coproduct
$\Delta^+$ on $\Lambda_\bZ$ for which the power-sum
symmetric functions $\psi_n=\sum_i x_i^n$ are primitive,
	$$
	\Delta^+(\psi_n) = \psi_n \tn 1 + 1\tn \psi_n,
	$$
and the multiplication law is determined 
by the coproduct for which the power sums are group-like,
	$$
	\Delta^\times(\psi_n) = (\psi_n\tn 1)(1\tn\psi_n) = \psi_n\tn\psi_n.
	$$
This is also a fruitful point of view for the $p$-typical functors: $W_{(p),\infty}$ is 
representable by the free subring $\Lambda_{\bZ,(p)}=\bZ[\theta_1,\theta_{p},\theta_{p^2},\dots]$ 
of \emph{$p$-typical} symmetric functions, where the $\theta_{n}\in\Lambda_\bZ$ are the Witt 
symmetric functions, which are defined recursively by the relations
	$$
	\psi_{n} = \sum_{d\mid n} d\theta_{d}^{n/d}.
	$$
The ring operations on the $p$-typical Witt functors are equivalent to coproducts on 
$\Lambda_{\bZ,(p)}$, and these are compatible with the two coproducts on $\Lambda_\bZ$.
In fact, $\Lambda_\bZ$ can be reconstructed from all the $\Lambda_{\bZ,(p)}$
as a kind of Euler product:
	\begin{equation}
		\label{eq:Euler-product}
	\Lambda_\bZ = \Lambda_{\bZ,(2)} \bcp \Lambda_{\bZ,(3)} \bcp \Lambda_{\bZ,(5)} \bcp \cdots,
	\end{equation}
where $\bcp$ is the operation that on representing objects corresponds to composition
on functors. This is in fact just another expression of formula~(\ref{eq:big-W-factorization}),
or alternatively of Wilkerson's theorem~\cite{Wilkerson}.

Now, the relation of all this to the infinite prime is that there is a well-known positivity
structure on $\Lambda_\bZ$. This is the subset $\Lambda_\bN$ consisting of symmetric functions that
have nonnegative coefficients when viewed as series in the formal variables $x_1,x_2,\dots$. It is
closed under addition and multiplication and contains $0$ and $1$; so it is a semiring, or more
plainly, an $\bN$-algebra. It is also well known that the coproducts $\Delta^+$ and $\Delta^\times$
above are induced by coproducts on $\Lambda_\bN$, and so one might hope to use them to extend the
big Witt construction to all $\bN$-algebras, and hence to incorporate some positivity information in
the usual theory of Witt vectors. This is indeed possible and the primary purpose of this chapter
is to write it all down in some detail. 

In fact, there is another such model over $\bN$, which is also well known.
It is the sub-$\bN$-algebra $\Sch$ consisting of symmetric functions which have
nonnegative coordinates with respect to the basis of Schur symmetric functions.

\begin{thmintro}[A]
	\label{thm:intro-big}
	The functors $\Hom_{\Alg{\bN}}(\Lambda_\bN,\vbl)$ and $\Hom_{\Alg{\bN}}(\Sch,\vbl)$ 
	extend the big Witt vector functor $W$ from $\bZ$-algebras to $\bN$-algebras. Each has a unique 
	comonad structure compatible with that on $W$.
\end{thmintro}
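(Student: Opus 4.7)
The plan is to handle the set-level extension, the $\bN$-algebra structure, and the comonad structure in sequence.

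First, both $\Lambda_\bN$ and $\Sch$ are sub-$\bN$-algebras of $\Lambda_\bZ$ whose groupifications $(\vbl)\tn_\bN\bZ$ recover $\Lambda_\bZ$: for $\Lambda_\bN$ this is immediate since every $\bZ$-combination of monomials is a difference of nonneg combinations, and for $\Sch$ it follows because the Schur functions form a $\bZ$-basis of $\Lambda_\bZ$. By the base-change adjunction, for any $\bZ$-algebra $A$ one has $\Hom_{\Alg{\bN}}(\Lambda_\bN,A) = \Hom_{\Alg{\bZ}}(\Lambda_\bZ,A) = W(A)$, and similarly for $\Sch$, so both functors restrict to $W$ on $\Alg{\bZ}$. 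To upgrade the set-valued functors to $\Alg{\bN}$-valued ones, I check that the two coproducts $\Delta^+$ and $\Delta^\times$ on $\Lambda_\bZ$ restrict to $\Lambda_\bN\tn_\bN\Lambda_\bN$ and to $\Sch\tn_\bN\Sch$. For $\Lambda_\bN$ this is straightforward: $\Delta^+$ arises from partitioning the variable set $\{x_i\}=\{y_i\}\sqcup\{z_i\}$ and $\Delta^\times$ from forming the product set $\{y_iz_j\}_{i,j}$, both of which visibly preserve monomial positivity. For $\Sch$ one invokes classical Hopf-algebraic facts: $\Delta^+(s_\lambda)$ expands in Littlewood--Richardson coefficients, and $\Delta^\times(s_\lambda)=s_\lambda(xy)=\sum g^\lambda_{\mu\nu}s_\mu(x)s_\nu(y)$ expands in Kronecker-type coefficients, both nonneg, as seen most transparently from the representation theory of general linear groups.

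Next, the comonad structure on $W$ corresponds by Yoneda in the category of birings to a biring map $\Lambda_\bZ\bcp\Lambda_\bZ\to\Lambda_\bZ$, explicitly the plethystic substitution $f\bcp g\mapsto f[g]$. To extend the comonad to the two new functors I must show this map restricts to $\Lambda_\bN\bcp_\bN\Lambda_\bN\to\Lambda_\bN$ and to $\Sch\bcp_\bN\Sch\to\Sch$. The content here is the positivity of plethysm: $f[g]$ is monomial-positive when $f,g$ are, since plethysm substitutes nonneg combinations of monomials into nonneg combinations of monomials; and $f[g]$ is Schur-positive when $f,g$ are, a classical theorem accessible via the representation theory of polynomial functors. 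The counit $\Lambda_\bZ\to\bZ$ (evaluation at $x_1=x_2=\cdots=0$ except for one coordinate) clearly restricts. With positivity in hand, the coassociativity and counit axioms descend automatically from the $\bZ$-level because the inclusions $\Lambda_\bN\hookrightarrow\Lambda_\bZ$ and $\Sch\hookrightarrow\Lambda_\bZ$ (and their $\bcp$-powers) are injective.

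Finally, uniqueness of the comonad structure is immediate from the same injectivity: any biring map $\Lambda_\bN\bcp_\bN\Lambda_\bN\to\Lambda_\bN$ compatible with the $\bZ$-level comonad is determined by its composition with $\Lambda_\bN\hookrightarrow\Lambda_\bZ$, which the compatibility forces to equal the plethystic map on $\Lambda_\bZ$. The argument for $\Sch$ is identical. The main obstacle is the Schur case of plethystic positivity: the monomial case is a formal substitution argument, but Schur-positivity of plethysm is a genuinely non-elementary input, and one must also verify that $\Sch\bcp_\bN\Sch\to\Lambda_\bZ\bcp\Lambda_\bZ$ is injective (so that the restriction of the plethystic map is well-defined as a map into $\Sch$), which is what lets the coassociativity diagram over $\bN$ be checked after composing into $\Lambda_\bZ$.
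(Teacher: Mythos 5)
Your proof is correct and follows essentially the same route as the paper: the paper reduces Theorem A to checking that $\Lambda_\bN$ and $\Sch$ are flat (indeed free) models of the composition algebra $\Lambda_\bZ$, verifying closure under $\Delta^+$, $\Delta^\times$, and plethysm via the variable-substitution descriptions for $\Lambda_\bN$ and via Littlewood--Richardson, Kronecker, and plethysm positivity for $\Sch$, with uniqueness coming from exactly the injectivity-of-tensor-and-composition-powers argument you give. The only nitpick is that the comonad counit is co-represented by the element $e=\psi_1=m_1=s_1$ lying in the model (not by a map $\Lambda_\bZ\to\bZ$), but the required fact is trivially true either way.
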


In terms of actual mathematical content, this is just a repackaging of some standard positivity
facts in the theory of symmetric functions. Thus a large part of this chapter is expository.
Its goal is to convince the reader that there is interesting mathematics where Witt vectors,
combinatorial positivity, and semiring theory meet. To this end, I have included a number of open
questions, which I hope will serve to focus readers' minds and stimulate their interest. Most of the
questions are precise, of the yes/no variety, and some are no doubt well within reach.

To give an example of something from this territory, I will report one new observation, which is
that there is also a positive model for the $p$-typical Witt vector functors.

\begin{thmintro}[B]
	\label{thm:intro-p-typical}
	There is a representable comonad on the category of $\bN$-algebras which agrees with the
	$p$-typical Witt vector functor $W_{(p),\infty}$ on $\bZ$-algebras.
\end{thmintro}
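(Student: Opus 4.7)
The strategy is to exhibit a sub-$\bN$-algebra $R$ of $\Lambda_{\bZ,(p)}$ such that $R\otimes_\bN\bZ = \Lambda_{\bZ,(p)}$ and such that the two coproducts $\Delta^+,\Delta^\times$, their counits, and the composition map realizing the comonad structure of $W_{(p),\infty}$ all restrict from $\Lambda_{\bZ,(p)}$ to $\bN$-algebra maps with source $R$ and target expressed in $R^{\otimes n}$ for the appropriate $n$. Once such an $R$ is produced, the representable functor $\Hom_{\Alg{\bN}}(R, \vbl)$ takes values in $\bN$-algebras and inherits a comonad structure, and for any $\bZ$-algebra $A$,
\[
\Hom_{\Alg{\bN}}(R, A) = \Hom_{\Alg{\bZ}}(R\otimes_\bN\bZ, A) = W_{(p),\infty}(A),
\]
so agreement with $W_{(p),\infty}$ on $\bZ$-algebras is automatic. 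The comonad axioms on $\bN$-algebras then follow from the corresponding ones on $\Lambda_{\bZ,(p)}$ because, by construction, $R$ will be free as an $\bN$-module on a $\bZ$-basis of $\Lambda_{\bZ,(p)}$, so each $R^{\otimes n}$ embeds into $\Lambda_{\bZ,(p)}^{\otimes n}$ and any diagram holding in the latter descends.

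To construct $R$, I would follow the hint in the abstract and look for a new $\bZ$-linear basis $\{b_\lambda\}$ of $\Lambda_{\bZ,(p)}$---a $p$-typical analog of the Schur basis---in which multiplication, both coproducts, their counits, and the comonad composition map all have nonnegative integer structure constants, and then set $R$ to be its $\bN$-linear span. Candidates for $\{b_\lambda\}$ can be sought by expanding small cases of $\Delta^+(\theta_{p^i})$ and $\Delta^\times(\theta_{p^i})$---computed from the defining recursion $\psi_n = \sum_{d\mid n} d\theta_d^{n/d}$ and the group-like and primitive formulas above---in various integral bases built from products of the $\theta_{p^i}$, and choosing one in which all four families of coefficients become simultaneously nonnegative.

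The main obstacle will be exhibiting this basis and proving positivity of all four families of structure constants at once. This is a genuinely new combinatorial input: the monomial basis in the $\theta_{p^i}$ fails because the coefficients of $\Delta^+(\theta_{p^i})$ expanded in it are not all nonnegative, and there is no direct transfer from the Schur basis of $\Lambda_\bZ$ since $\Lambda_{\bZ,(p)}$ is not $\bN$-spanned by the Schur functions it contains. Once the hardest case---simultaneous positivity of $\Delta^+$ and $\Delta^\times$---is under control, the counit and composition map should be handled by similar but simpler combinatorial analysis, and the base-change identity $R\otimes_\bN\bZ = \Lambda_{\bZ,(p)}$ is automatic from the definition of $R$ as the $\bN$-span of a $\bZ$-basis.
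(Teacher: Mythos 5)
Your general framework is the right one and matches the paper's formalism of flat $\bN$-models: find a sub-$\bN$-algebra $R\subseteq\Lambda_{\bZ,(p)}$, free as an $\bN$-module on a $\bZ$-basis, with all structure maps restricting. But the entire content of Theorem B lies in the step you defer---actually exhibiting the basis and proving positivity---and your proposed search direction would not find it. You suggest looking among ``integral bases built from products of the $\theta_{p^i}$,'' but no such basis can work: writing $d_p=-\theta_p=(\psi_1^p-\psi_p)/p$, one has
\[
\Delta^\times(d_p) \;=\; d_p\tn \psi_1^p + \psi_1^p\tn d_p - p\,d_p\tn d_p,
\]
so expanded purely in terms of $\psi_1$ and the $d_{p^i}$ (equivalently the $\theta_{p^i}$, up to signs) the comultiplication has a stubborn negative coefficient, and flipping signs only trades negativity between $\Delta^+$ and $\Delta^\times$. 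The indispensable idea is to adjoin the Frobenius $\psi_p$ itself as a generator---then $\Delta^\times(d_p)=d_p\tn\psi_p+\psi_p\tn d_p+p\,d_p\tn d_p$ becomes positive---and, more generally, to take as generators the plethystic iterates $\psi_p^{\circ i}\circ d_p^{\circ j}$ (not products of the $\theta_{p^i}$: the elements $d_p^{\circ j}$ and $d_{p^j}$ already differ for $j=2$).

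Concretely, the paper defines $\lttn{k}$ as the sub-$\bN$-algebra generated by $\psi_p^{\circ i}\circ d_p^{\circ j}$ with $i+j\le k$, and the two remaining difficulties are resolved as follows. First, freeness of $\lttn{k}$ as an $\bN$-module is not obvious because these generators are \emph{not} algebraically independent; they satisfy $x_{i,j}^p=x_{i+1,j}+p\,x_{i,j+1}$ (from $\psi_1^p=\psi_p+p\,d_p$ applied plethystically), and one proves that these are the only relations, yielding a basis of monomials with all exponents $<p$ in the lower layers. This also makes the multiplicative structure constants manifestly nonnegative, since overflowing an exponent is rewritten by a relation with coefficients $1$ and $p$. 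Second, positivity of $\Delta^\pm$ on all of $\lttn{k}$ is \emph{not} attacked by a direct combinatorial analysis of the basis, which is what your plan calls for; instead one checks it by hand only for the generators $\psi_1,\psi_p,d_p$ of $\lttn{1}$, and then bootstraps via the surjection of co-$\bN$-algebra objects $(\lttn{1})^{\bcp k}\twoheadrightarrow\lttn{k}$ given by plethysm, which simultaneously yields $\lttn{k}\circ\lttn{k'}\subseteq\lttn{k+k'}$ and hence the comonad (composition-algebra) structure on $\Lambda_{\bN,(p)}=\colim_k\lttn{k}$. Without these three inputs---the choice of plethystic generators including $\psi_p$, the presentation giving $\bN$-freeness, and the reduction of positivity to the case $k=1$---your outline does not constitute a proof.
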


As with theorem A, the representing object is given by a positivity condition with respect to a
$\bZ$-basis, in this case of $\Lambda_{\bZ,(p)}$. Write $d_p=-\theta_p=(\psi_1^p-\psi_p)/p$. Consider the
(finite) monomials of the form
	$$
	\prod_{i,j\geq 0} (\psi_p^{\circ i}\circ d_p^{\circ j})^{m_{ij}},
	$$
where $\circ$ denotes the plethysm operation on $\Lambda_\bZ$, and where $m_{ij}<p$. Then this 
family of monomials is a $\bZ$-basis for $\Lambda_{\bZ,(p)}$. Its $\bN$-linear span 
$\Lambda_{\bN,(p)}$ is a sub-$\bN$-algebra of $\Lambda_\bZ$, and the functor on $\bN$-algebras it
represents admits a unique comonad structure compatible with that of $W_{(p),\infty}$. To my 
knowledge, this basis of $\Lambda_{\bZ,(p)}$ has not been considered before.

In one way, the theory around theorem B is more satisfactory than that around theorem A. This is
that it also works for the $p$-typical Witt vectors of finite length. I initially hoped that bases
of $k$-Schur functions of Lapointe--Lascoux--Morse (see the book~\cite{Lam-et-al:k-Schur-book})
would allow us to define $\bN$-algebras of big Witt vectors of finite length, but this turned out
to be false. See section~\ref{sec:k-Schur} for some details on this negative result.

There is also a larger purpose to this chapter, which is to show that the formalism of (commutative)
semirings---and more broadly, scheme theory over $\bN$---is a natural and well-behaved formalism,
both in general and in its applications to Witt vectors and positivity. It has gotten almost no
attention from people working with scheme theory over $\bZ$, but it deserves to be developed
seriously---and independently of any applications, which are inevitable in my view. Let me conclude
with some words on this.

Arithmetic algebraic geometry can be regarded as the study of systems of polynomials equations
over $\bZ$. Such a system is equivalent to a presentation of a $\bZ$-algebra; so one could say that
arithmetic algebraic geometry is the study of the category of $\bZ$-algebras. Of course, arithmetic
algebraic geometers study many other objects, such as nonaffine schemes over $\bZ$, line bundles
over them, and so on, but let us consider these as objects of derived interest, as tools for
understanding polynomial equations. In fact, many such concepts are formally inevitable once we allow
ourselves the category of $\bZ$-algebras and some general category theory.

Let me recall how this works for algebraic spaces. The category of affine schemes is defined to be
the opposite of the category of rings. It has a Grothendieck topology on it, the fppf topology,
where covers are given by fppf algebras, those that are faithfully flat and finitely presentable.
The category of all algebraic spaces over $\bZ$ (a slight enlargement of the category of schemes)
is what one gets by taking the closure of the category of affine schemes under adjoining arbitrary
coproducts and quotients by fppf equivalence relations.

This is a completely formal process. (For instance, see
To\"en--Vaqui\'e~\cite{Toen-Vaquie:algebrisation,Toen-Vaquie:Under-Spec-Z}.) Given a category
$\mathsf{C}$ that looks enough like the opposite of the category of rings and a well-behaved class of
equivalence relations, we can produce \emph{an} algebraic geometry from $\mathsf{C}$ by gluing
together objects of $\mathsf{C}$ using the given equivalence relations. In particular, we can do this
with the category of $\bN$-algebras and produce a category that could be called the category of {\em
schemes over $\bN$}\index{scheme over $\bN$}. This brings positivity into algebraic geometry at a very
basic level. In arithmetic algebraic geometry today, and specifically in global class field theory,
positivity is treated in an \emph{ad hoc} manner, much it seems as integrality was before the arrival
of scheme theory in the 1950s.

On the other hand, as it appears to me, most people working on semiring theory follow a tradition
close to general algebra or even computer science. Scheme theory has had little influence. As
someone raised in that tradition, I find this unacceptable. The category of rings is the same as
the category of semirings equipped with a map from $\bZ$. In other words, one might say that
arithmetic algebraic geometry is nothing more than semiring theory over $\bZ$. One would therefore
expect an active interest in finding models over $\bN$, or the nonnegative reals, of as
many of the objects of usual algebraic geometry over $\bZ$ as possible, just as one always tries to
find models for objects of classical algebraic geometry, such as moduli spaces, over $\bZ$. Yet
such an effort seems to be nearly nonexistent. Perhaps one reason for this is that most existing
expositions of scheme theory begin by considering spectra of prime ideals, and it is less clear how
to mimic this approach over $\bN$. Or perhaps people are more interested in designing foundations
for specific applications, such as tropical algebraic geometry, rather than developing general
tools. Whatever the case, it is important to get beyond this.

So in the first two sections, I give a category-theoretic account of the very basics of semiring
theory and algebraic geometry over $\bN$. It is largely expository. I hope it will demonstrate to
people who are skeptical that the basic constructions of scheme theory extend to $\bN$, and
demonstrate to semiring theorists a point of view on their subject that emphasizes macroscopic
ideas, such as flatness, base change, and descent, more than what is common. So at least the general formalism over $\bN$ can be brought up closer to that over $\bZ$.

If arithmetic algebraic geometry provides the motivation here and semiring theory provides the
formalism, then algebraic combinatorics provides us with the positivity results. These are needed to
define Witt vectors of semirings that do not contain $-1$; they could be viewed as the analogues at
the infinite prime of the slightly subtle $p$-adic congruences needed to define the $p$-typical Witt
vectors for rings that do not contain $1/p$. But I also hope that combinatorialists will find
something fresh in our emphasis on Witt vectors rather than symmetric functions. While the two are
equivalent, they often suggest different questions. For instance, the coproduct $\Delta^\times$ on
$\Lambda_\bZ$ has gotten much less attention than $\Delta^+$. But these are just the co-operations
that induce the multiplication and addition operations on Witt vectors. Although it is not without
interest to view Witt vectors only as abelian groups, the real richness of the theory and their role
in arithmetic algebraic geometry comes when we remember their full ring structure (or even better,
their $\Lambda$-ring structure). So to a specialist in Witt vectors, ignoring $\Delta^\times$
might feel like missing the whole point. Also, aspects of symmetric functions related to the finite
primes seem under-studied in the algebraic combinatorics community. For instance, the ring
$\Lambda_{\bZ,(p)}=\bZ[\theta_1,\theta_p,\dots]$ of $p$-typical symmetric functions is, it appears
to me, nearly unknown there. (The symmetric function $\theta_p$ does appear in
Macdonald~\cite{Macdonald:SF} as $-\varphi_p$ on p.\ 120.)

I would like to thank Thomas Lam and Luc Lapointe for some helpful correspondence, especially on $k$-Schur
functions. I would also like to thank Lance Gurney, Lars Hesselholt, and especially Darij Grinberg for making
some observations that I have included. All automated computation I did for this paper was done in the software
system Sage~\cite{Sage}, especially using the algebraic combinatorics features developed by the
\texttt{Sage-Combinat} community~\cite{Sage-Combinat}.

Finally, I should mention that Connes and Consani~\cite{Connes-Consani:char-one} have developed a theory of
Witt vectors for certain algebras over the Boolean algebra $\{0,1\}$. It would be interesting to know if there
is any relation with our theory.

\setcounter{section}{0}
\section*{Conventions}
The word \emph{positive}\index{positive} will mean $>0$, and \emph{negative}\index{negative} will mean $<0$. 
For any subset 
$A$ of the field $\bR$ of real numbers, we will
write
	$$
	A_+:=\setof{x\in A}{x\geq 0}.\index{$A_+$}
	$$
The set $\bN$ of natural numbers is $\{0,1,2,\dots\}$. The ring $\bZ_p$ of $p$-adic integers
is $\lim_n \bZ/p^n\bZ$, and the field $\bQ_p$ of $p$-adic numbers is $\bZ_p[1/p]$.
	
For any category $\CC$, we will write $\CC(X,Y)=\Hom_\CC(X,Y)$ for short.

\section{Commutative algebra over $\bN$, the general theory}

The primary purpose of this section and the next one is to collect the definitions and formal
results of commutative algebra and scheme theory over $\bN$ that we will need. The reader is
encouraged to skip them at first and refer back only when necessary.

A general reference to the commutative algebra is Golan's book~\cite{Golan:book1}. While everything
here is essentially the same, there are some small differences. For instance, I have preferred to
drop the prefix \emph{semi} wherever possible and do not want to assume $0\neq 1$. I have also
followed the categorical method and used its terminology, because it gives the development a
feeling of inevitability that I think is absent from the more element-centric approaches.

\subsection{The category of $\bN$-modules}
\label{subsec:N-modules}
The category $\Mod{\bN}$\index{$\Mod{\bN}$} of $\bN$-modules is by definition the category of commutative monoids,
which we typically write additively. Thus an $\bN$-module is a set $M$ with a commutative binary
operation $+=+_M$ and an identity element $0=0_M$, and a homomorphism $M\to P$ is function $f\:M\to P$
such that $f(0_M)=0_{P}$ and $f(x+_My)=f(x)+_Pf(y)$ for all $x,y\in M$. As usual, the identity element
is unique when it exists; so we will often leave to the reader the task of specifying it.

For example, $\bN$ itself is an $\bN$-module under usual addition. It represents the
identity functor on $\Mod{\bN}$ in an evident (and unique) way.

\subsection{Submodules and monomorphisms}
A subset $P$ of an $\bN$-module $M$ is said to be a {\em sub-$\bN$-module}\index{sub-$\bN$-module} if it admits an $\bN$-module
structure making the inclusion $P\to M$ a homomorphism. Because a map of $\bN$-modules is 
%{\em injective}\index{injective map (of $\bN$-modules)} 
injective
%%%
if and only if it is a monomorphism, we will usually identify monomorphisms and submodules.

The dual statement is false---there are nonsurjective epimorphisms, for instance the usual
inclusion $\bN\to\bZ$.

\subsection{Products, coproducts}
The category has all products: $\prod_{i\in I}M_i$ is the usual product
set with identity $(\dots,0,\dots)$ and componentwise addition
	$$
	(\dots,m_i,\dots)+(\dots,m'_i,\dots) \dfn (\dots,m_i+m'_i,\dots).
	$$
It also has all coproducts: $\bigoplus_{i\in I}M_i$ is the sub-$\bN$-module of $\prod_{i\in I}M_i$
consisting of the vectors $(m_i)_{i\in I}$ such that $m_i=0$ for all but finitely many $i\in I$.

In particular, we can construct free objects: given any set $S$ and any set map $f\:S\to M$, the
morphism $\bigoplus_{s\in S}\bN\to M$ defined by $(n_s)_{s\in S}\mapsto \sum_{s\in S}
n_s f(s)$ is the unique extension of $f$ to an $\bN$-module map $\bigoplus_{i\in S}\bN\to M$.

\subsection{Internal equivalence relations, quotients, and epimorphisms}
A subset $E\subseteq M\times M$ is said to be a {\em $\Mod{\bN}$-equivalence relation}\index{$\Mod{\bN}$-equivalence relation} if it is both an
equivalence relation on $M$ and a sub-$\bN$-module of $M\times M$.

Given any homomorphism $f\:M\to M'$ of $\bN$-modules, the induced equivalence relation
$M\times_{M'} M$ is clearly a $\Mod{\bN}$-equivalence relation. Conversely, given any
$\Mod{\bN}$-equivalence relation $E$ on $M$, the set $M/E$ of equivalence classes has a unique
$\bN$-module structure such that the projection $M\to M/E$ sending $x$ to the equivalence class
$[x]$ of $x$ is a homomorphism of $\bN$-modules. In other words, the rules $[x]+[y]=[x+y]$ and
$0=[0]$ give a well-defined $\bN$-module structure on $M/E$.

\subsection{Generators and relations}
Let $R$ be a subset of $M\times M$. The $\Mod{\bN}$-equivalence relation $E$ generated by $R$ is the
minimal $\Mod{\bN}$-equivalence relation on $M$ containing $R$. It exists because any intersection of
submodules is a submodule and any intersection of equivalence relations is an equivalence relation. It
can be constructed explicitly by taking the transitive closure of the sub-$\bN$-module of $M\times M$
generated by $R$, the transpose of $R$, and the diagonal. Note the contrast with the theory of modules
over a ring, where taking the transitive closure is unnecessary. This gives the theory of modules over $\bN$ a dynamical feel which is absent when over a ring.

We can construct $\bN$-modules in terms of generators and relations by combining this with free
construction above. Clearly, much of this works in much more general categories, especially
categories of algebras, as defined below. We will use such generalizations without comment.
In the present case and others, we will often write $M/(\dots,m_i=n_i,\dots)$ for $M/E$ if
$R=\{\dots,(m_i,n_i),\dots\}$.

\subsection{$\Hom$ and $\tn$}
The set $\Hom(M,P)=\Mod{\bN}(M,P)$ of $\bN$-module homomorphisms $M\to P$ is itself an 
$\bN$-module under pointwise addition:
	\begin{equation}
	\label{eq:N-mod-str-on-Hom}
	(f+g)(x) := f(x)+_Pg(x), \quad\quad 0(x) := 0_P.
	\end{equation}
We will also use the notation $\Hom_{\bN}(M,P)$. It is functorial in $M$ (contravariant) and $P$
(covariant). For any fixed $\bN$-module $M$, the functor $\Hom(M,\vbl)$ has a left adjoint, which
we write $M\tn \vbl$, or $M\tn_{\bN} \vbl$ for clarity. In other words, $M\tn M'$ is characterized
by the property that a homomorphism $M\tn M'\to P$ is the same as set map $\langle
\vbl,\vbl\rangle\:M\times M' \to P$ which is \emph{$\bN$-bilinear}\index{bilinear morphism} in that 
it is an
$\bN$-module map in each argument if the other argument is fixed.
It follows that if we denote the image of an element $(m,m')$ under the universal bilinear map
$M\times M'\to M\tn M'$ by $m\tn m'$, then $M\tn M'$ is the commutative monoid generated by symbols
$m\tn m'$, for all $m\in M$ and $m'\in M'$, modulo all relations of the form
	\begin{alignat*}{2}
	(m_1+_M m_2)\tn m'&= m_1\tn m' + m_2\tn m',\quad\quad & 0_M\tn m' &= 0, \\
	m\tn(m'_1+_{M'}m'_2) &= m\tn m'_1 + m\tn m'_2,\quad\quad & m\tn 0_{M'} &= 0.
	\end{alignat*}
Then $\tn$ makes $\Mod{\bN}$ into a symmetric monoidal category with identity $\bN$.	

\subsection{$\bN$-algebras}
\label{subsec:N-algebras}
An {\em $\bN$-algebra}\index{$\bN$-algebra} (soon to be understood to be commutative) is defined to be a monoid in $\Mod{\bN}$
with respect to the monoidal structure $\tn$. Thus an $\bN$-algebra is a set $A$ with two
associative binary operations $+,\times$ and respective identity elements $0,1$ such that $+$ is
commutative and $\times$ distributes over $+$ and satisfies $0\times x=x\times 0=0$. We usually
write $xy=x\times y$. We will sometimes use the term \emph{semiring}\index{semiring} as a synonym 
for $\bN$-algebra.

A {\em morphism}\index{morphism!of $\bN$-algebras} $A\to B$ of $\bN$-algebras is a morphism of monoids in the monoidal category $\Mod{\bN}$.
In other words, it is a function $f\:A\to B$ which satisfies the identities 
	$$
	f(0)=0,\quad f(x+y)=f(x)+f(y),\quad f(1)=1,\quad f(xy)=f(x)f(y).
	$$
The category formed by $\bN$-algebras and their morphisms is denoted $\Alg{\bN}$\index{$\Alg{\bN}$}.

For example, the $\bN$-module $\bN$ admits a unique $\bN$-algebra structure; multiplication is usual
integer multiplication. It is the initial object in $\Alg{\bN}$. Likewise, ${0}$ with its unique
$\bN$-algebra structure is the terminal object. For any subring $A\subseteq \bR$, the subset
$A_+:=\setof{x\in A}{x\geq 0}$ is a sub-$\bN$-algebra of $A$.

The category of rings is equivalent in an evident way to the full subcategory of $\Alg{\bN}$
spanned by objects in which $1$ has an additive inverse.

\subsection{Commutativity assumption}
From now on in this paper, all $\bN$-algebras will be understood to be commutative under $\times$ unless stated otherwise. However for much of the rest of this section, this is just for convenience.

Also for the rest of this section, $A$ will denote an $\bN$-algebra.

\subsection{$A$-modules and $A$-algebras}
One defines {\em $A$-modules} and {\em $A$-algebras} in the obvious way.
An {\em $A$-module}\index{$A$-module} (also called an {\em $A$-semimodule} in the literature)
is an $\bN$-module equipped with an action of 
$A$ with respect to the monoidal structure $\tn$.
So it is an $\bN$-module $M$ equipped with an $\bN$-module map $A\tn M\to M$, written
$a\tn m\mapsto am$, such that the following identities are satisfied:
	$$
	1m = m, \quad\quad (ab)m=a(bm).
	$$
A {\em morphism}\index{morphism!of $A$-modules} of $A$-modules $M\to P$ is an $\bN$-linear map $f\:M\to P$ satisfying the identity
$f(am)=af(m)$. The category of $A$-modules is denoted $\Mod{A}$\index{$\Mod{A}$}.
We will sometimes write $\Hom_A(M,P)=\Mod{A}(M,P)$ for the set of $A$-module morphisms.
Observe when $A=\bN$, the category just defined agrees with that defined 
in~(\ref{subsec:N-modules}).

An {\em $A$-algebra}\index{$A$-algebra} is an $\bN$-algebra $B$ equipped with a morphism $i_B\:A\to B$. A {\em morphism}\index{morphism!of $A$-algebras} $B\to C$
of $A$-algebras is a morphism $f\:B\to C$ of $\bN$-algebras such that $f\circ i_B = i_C$. The
category of $A$-algebras is denoted $\Alg{A}$\index{$\Alg{A}$}. As with modules, when $A=\bN$, the category of
$\bN$-algebras as defined here agrees with that defined in~(\ref{subsec:N-algebras}).

Also observe that if $A$ is a ring, then these definitions of $A$-module and $A$-algebra agree
with the usual ones in commutative algebra. In particular, a $\bZ$-module is the same as an
abelian group, and a $\bZ$-algebra is the same as a ring.

\subsection{$\Hom_A$ and $\tn_A$}
The set $\Hom_A(M,P)$ has a natural $A$-module structure given by pointwise operations. In other
words, it is a sub-$\bN$-module of $\Hom_{\bN}(M,P)$, and its $A$-module structure is given by the
identity $(af)(x)=a f(x)$. Of course, this uses the commutativity of multiplication on $A$. 

For any fixed $A$-module $M$, the functor $\Hom_A(M,\vbl)$ has a left adjoint, which we write
$M\tn_A \vbl$:
	$$
	\Hom_A(M\tn_A M',N) = \Hom_A(M',\Hom_A(M,N)).
	$$
(Again, when $A=\bN$ this agrees with the functor $M\tn\vbl$ defined above.)
As above, an {\em $A$-linear map}\index{$A$-linear map} $M\tn_A M'\to P$ is the same a set map
$\langle\vbl,\vbl\rangle\:M\times M'\to P$ which is \emph{$A$-bilinear}\index{$A$-bilinear map} in 
the sense that it is an $A$-module map in each argument when the other argument is held fixed.
Thus $M\tn_A M'$ equals the quotient of $M\tn_{\bN}M'$  by all relations of the form 
	\begin{equation}
		(am)\tn m'=m\tn(am'),
	\end{equation}
with its $A$-module structure given by $a(m\tn m')=am\tn m' = m\tn am'$.

\subsection{Limits and colimits of $A$-modules and $A$-algebras}
The category $\Mod{A}$ has all limits and colimits. Limits, coproducts, and filtered colimits can be
constructed as when $A$ is a ring, but coequalizers might be less familiar. Given a pair of maps
$f,g\:M\to P$ in $\Mod{A}$, the {\em coequalizer}\index{coequalizer} can be constructed as the
quotient of $P$ by the $\Mod{A}$-equivalence relation generated by the subset
$\setof{(f(x),g(x))}{x\in M}\subseteq P\times P$.

\subsection{Warning: kernels and cokernels}

There are reasonable notions of kernel and cokernel, but we will not need them. The {\em kernel}\index{kernel} of a map
$f\:M\to N$ of $A$-modules is defined to be the pull-back $M\times_N (0) \to M$, and the cokernel
is the push-out $N\to N\oplus_M (0)$. Many familiar properties of kernels and cokernels from
abelian categories fail to hold for modules over semirings. For instance the sum map
$\bN\oplus\bN\to\bN$ has trivial kernel and cokernel, but it is not an isomorphism. So kernels and
cokernels play a less prominent role here than they do in abelian categories. Equalizers and
coequalizers are more useful.

\subsection{Base change, induced and co-induced modules}
Let $B$ be an $A$-algebra, and let $M$ be an $A$-module. Then $B\tn_A M$ and $\Hom_{A}(B,M)$ are
$B$-modules in the evident ways. These constructions give the left and right adjoints of the
forgetful functor $U$ from $B$-modules to $A$-modules. They are called the $B$-modules
\emph{induced}\index{induced module} and \emph{coinduced}\index{coinduced module} by $A$. It is also clear that the forgetful functor $U$ is both
monadic and comonadic.

\subsection{Limits and colimits of $A$-algebras}
Like $\Mod{A}$, the category $\Alg{A}$ also has all limits and colimits.
Limits, coproducts, and filtered colimits can again be constructed as when
$A$ is a ring. In particular, coproducts are tensor products:
	$$
	B \smcoprod C = B\tn_A C.
	$$
More generally, the {\em coproduct}\index{coproduct} of any family $(B_i)_{i\in I}$ is the tensor product of all $B_i$
over $A$. And as with modules, the {\em coequalizer}\index{coequalizer} of two $A$-algebra morphisms $f,g\:
B\rightrightarrows C$ is $C/R$, where $R$ is the equivalence relation internal to $\Alg{A}$ on $C$
generated by the relation $\setof{(f(x),g(x))\in C\times C}{x\in B}$.

\subsection{Base change for algebras}
For any $A$-algebra $B$, the forgetful functor $\Alg{B}\to\Alg{A}$ has a left adjoint. It sends
$C$ to $B\tn_A C$, where the $B$-algebra structure is given by the map $b\mapsto b\tn 1$.

\subsection{Flat modules and algebras}
Let $M$ be an $A$-module.
Because the functor $M\tn_A\vbl\:\Mod{A}\to\Mod{A}$ has a right adjoint, it preserves all colimits.
Since finite products and coproducts agree, it also preserves finite products. If it preserves
equalizers, we say $M$ is \emph{flat}\index{flat!module}. In this case, $M\tn_A\vbl$ preserves all finite limits.

Observe that while not all monomorphisms are equalizers, it is nevertheless true that tensoring 
with a flat module preserves monomorphisms. Indeed $f\:N\to P$ is a monomorphism if and only
if the diagonal map $N\to N\times_P N$ is an isomorphism, and this property is
preserved by tensoring with a flat module.

Flatness is preserved under base change. An $A$-algebra is said to be {\em
flat}\index{flat!$A$-algebra} if it is flat when regarded as an $A$-module. If $A\to B$ and $B\to C$
are flat, then so is the composition $A\to C$. More generally, if $B$ is a flat $A$-algebra, and $M$
is a flat $B$-module, then $M$ is flat as an $A$-module.

\subsection{Examples of flat modules}
Any free module is flat. Any filtered colimit of flat modules is flat. We will see
in~(\ref{pro:flatness-is-local}) below that flatness is a flat-local property. So for example a
module is flat if it is flat-locally free.

It is a theorem of Govorov and Lazard that any flat module over a ring can be represented as a
filtered colimit of free modules. This continues to hold for modules over any $\bN$-algebra. As
over rings, this is tantamount to an equational criterion for flatness for modules over
$\bN$-algebras, but now we must consider all relations of the form $\sum_i a_ix_i = \sum_i b_i x_i$
instead of just those of the form $\sum_i a_i x_i=0$, as one usually does over rings. See
Katsov~\cite{Katsov:flat-semimodules}.

If $S$ is a multiplicative subset of $A$, let $A[1/S]$\index{$A[1/S]$} denote the initial $A$-algebra in which
every element of $S$ becomes multiplicatively invertible. Then $A[1/S]$ is flat because it can be
represented as $\colim_{s\in S}A$, where for all $s,t\in S$ there is a transition map $A\to A$ from
position $s$ to position $st$ given by multiplication by $t$.

But it is completely different if we adjoin additive rather than multiplicative inverses. We will
see in~(\ref{cor:flat-over-zerosumfree}) below that $\bZ$ is not a flat $\bN$-module. In fact, $0$
is the only $\bZ$-module that is flat over $\bN$. It is also the only $\bR$-vector space that is
flat over $\bRpl$.

\section{The flat topology over $\bN$}

The purpose of this section is to give some idea of scheme theory over $\bN$. It is the point of
view I prefer for the mathematics of this chapter, but I will not use it in a serious way.

Scheme theory and the flat topology over $\bN$ were apparently first considered in
To\"en--Vaqui\'e~\cite{Toen-Vaquie:Under-Spec-Z}. Lorscheid has considered a different but related
approach~\cite{Lorscheid:Blueprints} (or see his chapter in this volume). In recent years, positivity
structures in algebraic geometry have appeared in some interesting applications, although in an
\emph{ad hoc} way. For example, let us mention the work of
Lusztig~\cite{Lusztig:survey-total-positivity},
Fock--Goncharov~\cite{Fock-Goncharov:higher-Teichmueller}, and
Rietsch~\cite{Rietsch:Toeplitz-matrices}.

\subsection{Flat covers}
Let us say that a family $(B_i)_{i\in I}$ of flat $A$-algebras is 
\emph{faithful}\index{faithful family (of flat algebras)}
if the family of base change functors
	\begin{equation}
		\label{eq:fpqc-base-change-functor}
	\Mod{A}\longmap \prod_{i\in I}\Mod{B_i}, \quad M\mapsto (B_i\tn_A M)_{i\in I}
	\end{equation}
reflects isomorphisms, that is, a map $M\to N$ of $A$-modules is an isomorphism if (and only
if) for every $i\in I$ the induced map $B_i\tn_AM \to B_i\tn_A N$ is. Let us say that a family of
flat $A$-algebras is an \emph{fpqc cover}\index{fpqc!cover} if it has a finite subfamily that is faithful.

\subsection{The fpqc topology}
For any $\bN$-algebra $K$, let $\Aff_K$\index{$\Aff_K$} denote the opposite of the category of $K$-algebras. 
For any $K$-algebra $A$, write $\Spec(A)$ for the corresponding object in $\Aff_K$. The fpqc covers
form a pretopology on $\Aff_K$, in the usual way. See To\"en--Vaqui\'e, proposition
2.4~\cite{Toen-Vaquie:Under-Spec-Z}. The resulting topology is called the \emph{fpqc topology}\index{fpqc!topology} or,
less formally, the \emph{flat topology}\index{flat!topology}.

One might also like to define an \emph{fppf topology}\index{fppf topology} topology by requiring
that each $B_i$ be finitely presented as an $A$-algebra. The following question is then natural:

\begin{question}
	Let $(B_i)_{i\in I}$ be a faithful family of flat $A$-algebras. If each $B_i$ is finitely
	presented as an $A$-algebra, is there a finite faithful subfamily?
\end{question}

When $K$ is $\bZ$, it is a fundamental fact from scheme theory that the answer is yes. To prove it
one combines quasi-compactness in the Zariski topology with the fact that faithfully flat morphisms
of finite presentation have open image.

\subsection{Faithfully flat descent}
Let $(B_i)_{i\in I}$ be a faithful family of flat $A$-algebras. Then the family of base change
functors~(\ref{eq:fpqc-base-change-functor}) is comonadic. As usual, this is just an application of
Beck's theorem in category theory. (See Borceux~\cite{Borceux:Handbook.v2}, theorem 4.4.4, p.\ 212.
See also theorem 2.5 of To\"en--Vaqui\'e~\cite{Toen-Vaquie:Under-Spec-Z}.) Thus the fibered
category of modules satisfies effective descent in the comonadic sense. If the family $(B_i)_{i\in
I}$ is finite, or more generally an fpqc cover, then the comonadic approach to descent agrees with
the Grothendieck's original one. So in either sense, in the fpqc topology, the fibered category of
modules satisfies effective descent, or it is a stack.

Thus descent allows us to recover the category of $A$-modules from that of modules over the cover.
As usual, this allows us to recover $A$ itself:

\begin{proposition}
	\label{pro:algebra-determined-by-module-category}
	Let $A$ be an $\bN$-module, and let $E$ denote the $\bN$-algebra (possibly noncommutative, a 
	priori) of natural endomorphisms of the identity functor on $\Mod{A}$. Then the canonical
	map $A\to E$ is an isomorphism.
\end{proposition}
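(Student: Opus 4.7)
The plan is to write down the canonical map explicitly and then argue that it is both injective and surjective by a Yoneda-style evaluation at the free rank-one module $A$. Concretely, to an element $a\in A$ we associate the natural transformation $\mu_a\:\id\Rightarrow\id$ whose component at an $A$-module $M$ is multiplication by $a$, that is, $(\mu_a)_M(m)=am$. Naturality of $\mu_a$ with respect to an $A$-linear map $f\:M\to N$ is exactly the identity $f(am)=af(m)$, and the fact that $(\mu_a)_M$ is itself $A$-linear uses the commutativity of $A$, which has been standing since~(\ref{subsec:N-algebras}).

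First I would check that $a\mapsto \mu_a$ is a homomorphism of $\bN$-algebras into $E$. The addition on $E$ is pointwise addition of natural transformations (using the $\bN$-module structure on $\Hom$ recorded in~(\ref{eq:N-mod-str-on-Hom})) and the multiplication on $E$ is vertical composition; so $\mu_{a+b}=\mu_a+\mu_b$ and $\mu_{ab}=\mu_a\circ\mu_b$ are immediate from the $A$-module axioms, as are $\mu_0=0$ and $\mu_1=\id$.

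Next, injectivity: if $\mu_a=\mu_b$, then evaluating the $A$-component on $1\in A$ gives $a=b$. For surjectivity, let $\eta\in E$ and set $a:=\eta_A(1)\in A$. For any $A$-module $M$ and any $m\in M$, the assignment $x\mapsto xm$ defines an $A$-linear map $\varphi_m\:A\to M$ with $\varphi_m(1)=m$. Naturality of $\eta$ applied to $\varphi_m$ yields the commutative square
	$$
	\eta_M\circ\varphi_m = \varphi_m\circ\eta_A,
	$$
which evaluated at $1$ gives $\eta_M(m)=\varphi_m(a)=am=(\mu_a)_M(m)$. Hence $\eta=\mu_a$, and the canonical map is a bijection; as a bonus, this shows $E$ is automatically commutative.

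I do not expect a real obstacle here: the argument is essentially Yoneda for the forgetful functor $\Mod{A}\to\mathsf{Set}$, and the only point that uses anything specific to $\bN$-algebras (as opposed to rings) is the remark that $(\mu_a)_M$ is $A$-linear, which goes through unchanged because the module axioms $1m=m$ and $(ab)m=a(bm)$ together with commutativity of $A$ are all that is needed.
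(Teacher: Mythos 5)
Your proposal is correct and its core argument — setting $a=\eta_A(1)$ and using naturality against the map $A\to M$ sending $1\mapsto m$ to show $\eta_M$ is multiplication by $a$ — is exactly the paper's proof. The extra verifications you include (that $a\mapsto\mu_a$ is an $\bN$-algebra homomorphism and is injective) are routine and left implicit in the paper.
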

\begin{proof}
	Let $\varphi$ be such a natural endomorphism. Set $a=\varphi_A(1)$. Then
	for any $M\in\Mod{A}$, the map $\varphi_M\:M\to M$ is multiplication by $a$. Indeed,
	for any $m\in M$, consider the map $f\:A\to M$ determined by $f(1)=m$. Then we have
	$\varphi_M(m)=\varphi_M(f(1)) = f(\varphi_A(1)) = f(a) = am$.
\end{proof}

\begin{proposition}
	\label{pro:flatness-is-local}
	Let $B$ be an $\bN$-algebra, and let $(C_i)_{i\in I}$ be a faithful family of flat $B$-algebras.
	\begin{enumerate}
		\item For any finite diagram $(M_j)_{j\in J}$ of $B$-modules, a map
			$M\to\lim_j M_j$ is an isomorphism if and only if each
			map $C_i\tn_B M\to\lim_j C_i\tn_B M_j$ is an isomorphism.
		\item Suppose $B$ is an $A$-algebra, for some given $\bN$-algebra $A$. Then $B$ is flat over $A$
			 if and only if each $C_i$ is.
		\item A $B$-module $N$ is flat if and only if each $C_i\tn_B N$ is flat as a $C_i$-module.
	\end{enumerate}
\end{proposition}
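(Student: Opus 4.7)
The plan is to establish (1) first as the workhorse, and then derive (2) and (3) from it by standard tensor-product manipulations. Throughout I use two pieces of input that are already available: the defining reflection-of-isomorphisms property of a faithful family, and the fact, from the flatness subsection, that tensoring with a flat module preserves finite limits.

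For (1), the forward direction is a tautology: if $M\to\lim_jM_j$ is an isomorphism, then so is $C_i\tn_B M\to C_i\tn_B \lim_jM_j$, and flatness of $C_i$ over $B$ lets us identify the target with $\lim_jC_i\tn_BM_j$ because the diagram is finite. For the converse, reverse this identification: the hypothesis gives that $C_i\tn_BM\to C_i\tn_B\lim_jM_j$ is an isomorphism for every $i$, and the faithfulness of the family $(C_i)_{i\in I}$ then lets us conclude that $M\to\lim_jM_j$ is itself an isomorphism. No comonadicity is needed here; only the reflection property and flatness of each $C_i$.

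For (2), flatness of $A\to B$ and of each $B\to C_i$ pass through composition, giving the ``only if'' direction by the fact (already recorded in the flatness subsection) that a composition of flat maps is flat. For the ``if'' direction, I would take a finite diagram $(M_j)$ in $\Mod{A}$ and test whether $B\tn_A\lim_jM_j\to\lim_jB\tn_AM_j$ is an isomorphism. By (1), applied to the $B$-modules $B\tn_AM_j$, it suffices to see that each $C_i\tn_B(B\tn_A\lim_jM_j)\to C_i\tn_B\lim_j(B\tn_AM_j)$ is an isomorphism; but associativity of the tensor product rewrites the left side as $C_i\tn_A\lim_jM_j$, while flatness of $C_i$ over $B$ together with the tensor-product identification rewrites the right side as $\lim_jC_i\tn_AM_j$, and the resulting map is an isomorphism by hypothesis that $C_i$ is flat over $A$.

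Part (3) follows the same pattern applied to $N$ in place of $B$. The ``only if'' direction uses that the functor $(C_i\tn_BN)\tn_{C_i}\vbl$ on $\Mod{C_i}$ is naturally isomorphic to $N\tn_B\vbl$ composed with the forgetful functor $\Mod{C_i}\to\Mod{B}$; since the forgetful functor preserves all limits and $N\tn_B\vbl$ preserves finite limits, so does the composition. For the ``if'' direction, I would apply (1) to the diagram $(N\tn_BP_j)$ in $\Mod{B}$ arising from an arbitrary finite diagram $(P_j)$ in $\Mod{B}$, and then rewrite the resulting tensor products using flatness of each $C_i$ over $B$ to land inside $\Mod{C_i}$, at which point flatness of $C_i\tn_BN$ over $C_i$ finishes the argument. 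I expect the only real bookkeeping obstacle to be making sure the various associativity-and-flatness rewrites of $C_i\tn_B(N\tn_B\lim_jP_j)$ and $C_i\tn_B\lim_j(N\tn_BP_j)$ are carried out coherently; this is routine but needs the finiteness of the diagram at just the right step.
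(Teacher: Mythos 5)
Your proof is correct and follows essentially the same route as the paper's: part (1) by combining the reflection-of-isomorphisms property of a faithful family with the fact that each flat $C_i\tn_B\vbl$ commutes with finite limits, and parts (2) and (3) by reducing to (1) via associativity of tensor products, with the converse directions handled by stability of flatness under composition and base change respectively. The only difference is that you spell out some identifications the paper leaves implicit; nothing is missing.
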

\begin{proof}
(1): First, because the family $(C_i)_{i\in I}$ is faithful, the map $M\to\lim_j M_j$ is an isomorphism if and only if each map $C_i\tn_B M\to C_i\tn_B \lim_j M_j$ is. Second, because each $C_i$ is flat, we have 
$C_i\tn_B\lim_j M_j = \lim_j (C_i\tn_B M_j)$. Combining these two statements proves (1).

(2): Consider a finite diagram $(M_j)_{j\in J}$ of $A$-modules. Suppose each $C_i$ is flat over $A$. 
Then the induced maps $C_i\tn_A \lim_j M_j \to \lim_j (C_i\tn_A M_j)$ are isomorphisms, and hence
so are the maps $$C_i\tn_B B\tn_A \lim_j M_j \to \lim_j (C_i\tn_B B \tn_A M_j).$$
Therefore by part (1), the map $B\tn_A \lim_j M_j \to \lim_j(B\tn_A M_j)$ is an isomorphism, and hence
$B$ is flat over $A$.

The converse holds because flatness is stable under composition.

(3): Suppose each $C_i\tn_B N$ is flat over $C_i$, and hence over $B$.
Then for any finite diagram $(M_j)_{j\in J}$ of $B$-modules, the maps 
$$C_i\tn_B N\tn_B \lim_j M_j\to \lim_j (C_i\tn_B N\tn_B M_j)$$ are isomorphisms. Therefore by part (1), the map
$N\tn_B\lim_j M_j\to \lim_j (N\tn_B M_j)$ is an isomorphism, and so $N$ is flat over $B$.

The converse holds because flatness is stable under base change.
\end{proof}

\subsection{Algebraic geometry over $\bN$}
We can then define the basic objects of algebraic geometry over any $\bN$-algebra $K$ in a formal
way, as in To\"en--Vaqui\'e~\cite{Toen-Vaquie:Under-Spec-Z}. A map $f\:\Spec(B) \to\Spec(A)$ is
\emph{Zariski open}\index{Zariski open morphism}
if the corresponding map $A\to B$ is a flat epimorphism of
finite presentation. One then defines {\em $K$-schemes}\index{$K$-scheme} by gluing together {\em affine $K$-schemes}\index{affine $K$-scheme} along
Zariski open maps. In To\"en--Vaqui\'e, all this takes place in the category of sheaves of sets on
$\Aff_K$ in the Zariski topology.

Presumably one could define a category of algebraic spaces over $\bN$ by adjoining quotients of
fppf or \'etale equivalence relations, under some meaning of these terms, as for example in
To\"en--Vaqui\'e, section 2.2~\cite{Toen-Vaquie:algebrisation}. But there are subtleties whether one uses
fppf maps, as in the question above, or \'etale maps, where it is not clear that the first
definition that comes to mind is the best one. So some care seems to be needed before we can have
complete confidence in the definition. In any case, we will certainly not need this generality.

\subsection{Extending fibered categories to nonaffine schemes}
Because we have faithfully flat descent for modules, we can define the category $\Mod{X}$ of
$X$-modules (i.e., quasi-coherent sheaves) over any $K$-scheme $X$. More generally, any fibered
category over $\Aff_K$ for which we have effective descent extends uniquely (up to some appropriate
notion of equivalence) to such a fibered category over the category of $K$-schemes. For example,
flat modules have this property by~(\ref{pro:flatness-is-local})(3). So we can make sense of a flat
module over any $\bN$-scheme.

Similarly, using~(\ref{pro:flatness-is-local})(2), there is a unique way of defining flatness for
morphisms $X\to Y$ of $K$-schemes that is stable under base change on $Y$ and fpqc-local on $X$.

We will conclude this section with some examples of flat-local constructions 
and properties.

\subsection{Additively idempotent elements}

An element $m$ of an $A$-module $M$ is \emph{additively idempotent}\index{additively idempotent 
element} if $2m=m$. The set $I(M)$
of such elements is therefore the equalizer
	$$
	\displaylabelfork{I(M)}{}{M}{x\mapsto x}{x\mapsto 2x}{M.}
	$$
Thus $I$ has a flat-local nature. Indeed, for any flat $A$-algebra $B$, the induced map
$B\tn_A I(M)\to I(B\tn_A M)$ is an isomorphism of $B$-modules, and
if $(B_i)_{i\in J}$ is an fpqc cover of $A$, then the induced map
	$$
	\displayfork{I(M)}{\prod_i I(B_i\tn_A M)}{\prod_{j,j'}I(B_j\tn_A B_{j'}\tn_A M)}
	$$
is an equalizer diagram.
It follows that, given an $\bN$-scheme $X$, this defines an $X$-module $I(M)$ for any $X$-module 
$M$. Thus the functor $I$ prolongs to a morphism of fibered categories $I\:\Mod{X}\to\Mod{X}$.

\subsection{Cancellative modules}
\label{subsec:cancellative-modules}

An $A$-module $M$ is \emph{additively cancellative}\index{cancellative module!additively} if the 
implication
	$$
	x+y=x+z \quad\Rightarrow\quad y=z
	$$
holds in $M$. This is equivalent to the following being an equalizer diagram:
	$$
%%%	\entrymodifiers={+!!<0pt,\fontdimen22\textfont2>}
%%%	\def\objectstyle{\displaystyle}
	\xymatrix@C=60pt{{M^2} \ar^-{(x,y)\mapsto (x,y,y)}[r] & {M^3} 
		\ar@<0.7ex>^-{(x,y,z)\mapsto x+y}[r]\ar@<-0.7ex>_-{(x,y,z)\mapsto x+z}[r] & {M.}}
	$$
Therefore, by~(\ref{pro:flatness-is-local}), additive cancellativity is a flat-local property.

Multiplicative cancellation works similarly. For any $a\in A$, let us say that $M$ is
\emph{$a$-cancellative}\index{cancellative module!$a$-cancellative} if the implication $$ax=ay \quad\Rightarrow\quad x=y$$ holds in $M$.
This is equivalent to the following being an equalizer diagram:
	$$
%%%	\entrymodifiers={+!!<0pt,\fontdimen22\textfont2>}
%%%	\def\objectstyle{\displaystyle}
	\xymatrix@C=50pt{{M} \ar^-{x\mapsto (x,x)}[r] & {M^2} 
		\ar@<0.7ex>^-{(x,y)\mapsto ax}[r]\ar@<-0.7ex>_-{(x,y)\mapsto ay}[r] & {M}}
	$$
Then $a$-cancellativity is also a flat-local property.

\subsection{Strong and subtractive morphisms}
A morphism $M\to N$ of $A$-modules induces two diagrams:
	$$
	\xymatrix{
	M\times M \ar^-{+}[r]\ar[d] & M\ar[d] & & M\times M \ar^-{+}[r]\ar[d] & M\ar[d] \\
	N\times N \ar^-{+}[r] & N & & M\times N \ar^-{+}[r] & N.
	}
	$$
We say it is \emph{strong}\index{strong morphism} if the first diagram
is Cartesian, and \emph{subtractive}\index{subtractive morphism}
if the second is. So a submodule $M\subseteq N$ is subtractive if and only if it closed under differences that
exist in $N$. Both properties are flat-local.

\subsection{Additively invertible elements}
For any $\bN$-algebra $A$ and any $A$-module $M$, consider the subset of additively invertible elements:
	\begin{equation}
		V(M):=\setof{x\in M}{\exists y\in M,\ x+y=0}.		
	\end{equation}
Then $V(M)$ is a sub-$A$-module of $M$.
The resulting functor is the right adjoint of the forgetful functor $\Mod{\bZ\tn_\bN A}\to\Mod{A}$,
and so we have
	$$
	V(M)=\Hom_A(\bZ\tn_\bN A,M).
	$$ 
It can also be expressed as an equalizer:
	\begin{equation}
%%%		\entrymodifiers={+!!<0pt,\fontdimen22\textfont2>}
%%%		\def\objectstyle{\displaystyle}
		\xymatrix@C=40pt{
		{V(M)} \ar^-{x\mapsto (x,-x)}[r] 
			& {M^2} \ar@<0.7ex>^-{0}[r]\ar@<-0.7ex>_-{+}[r] 
			& M
		}
	\end{equation}
Therefore $V$ has a local nature, and so we can define an $X$-module $V(M)$ for any module $M$
over any $\bN$-scheme $X$. In fact, since $V(M)$ is a group under addition, $V(M)$ is an
$(X\times\Spec(\bZ))$-module, and so $V$ can be viewed as a morphism $\Mod{X}\to
f_*(\Mod{X\times\Spec(\bZ)})$ of fibered categories, where $f$ denotes the canonical projection
$X\times\Spec(\bZ)\to X$.

Golan~\cite{Golan:book1} says $M$ is \emph{zerosumfree}\index{zerosumfree module} if $V(M)=0$, or
equivalently if $(0)$ is a strong submodule. The remarks above imply that being zerosumfree is a
flat-local property.

\begin{corollary}
	\label{cor:flat-over-zerosumfree}
	Let $A$ be a zerosumfree $\bN$-algebra, and let $M$ be a flat $A$-algebra.
	Then $M$ is zerosumfree. In particular, the zero module $(0)$ is the only flat $A$-module which is a 
	group under addition, and the map $A\to\bZ\tn_\bN A$ is flat if and only if 
	$\bZ\tn_\bN A=0$. 
\end{corollary}
\begin{proof}
	Since $M$ is flat, we have $V(M)=M\tn_A V(A)=M\tn_A 0=0$. 
\end{proof}

It follows formally that the only $\bZ$-scheme that is flat over a given zerosumfree scheme is the empty
scheme. Thus we would expect some subtleties in the spirit of derived functors when passing from algebraic
geometry over $\bN$ to that over $\bZ$, or from $\bRpl$ to $\bR$.

\section{Plethystic algebra for $\bN$-algebras}
\label{sec:plethystic-algebra}
Let $K,K',L$ be $\bN$-algebras.

\subsection{Models of co-$\CC$ objects in $\Alg{K}$}
\label{subsec:models-of-comodules}
Let $\CC$ be a category of the kind considered in universal algebra. Thus an object of $\CC$
is a set with a family of multinary operations satisfying some universal identities. For example,
$\CC$ could be the categories of groups, monoids, $L$-algebras, $L$-modules, Lie algebras over 
$L$, loops, heaps, and so on.

Let us say that a covariant functor $\Alg{K}\to\CC$ is {\em representable}\index{representable functor} if its underlying set-valued
functor is. Let us call the object representing such a functor a \emph{co-$\CC$
object}\index{co-$\CC$ object} in $\Alg{K}$. For example, a co-group structure on a $K$-algebra $A$
is the same as a group scheme structure on $\Spec(A)$ over $\Spec(K)$. Likewise, a co-$L$-algebra
structure on $A$ is the same as a $L$-algebra scheme structure on $\Spec(A)$ over $\Spec(K)$.

\subsection{Co-$L$-algebra objects in $\Alg{K}$}
Unpacking this further, we see
a co-$L$-algebra object of $\Alg{K}$ is a $K$-algebra $P$ together with $K$-algebra maps
	\begin{align}
		\begin{split}
		\Delta^+\: P&\longmap P\tn_K P  \\
		\Delta^{\times}\:P&\longmap P\tn_K P
		\end{split}
	\end{align}
subject to the condition that for all $A\in\Alg{K}$, the set $\Hom_K(P,A)$ equipped with the binary 
operations $+$ and $\times$ induced by $\Delta^+$ and $\Delta^{\times}$ is an $\bN$-algebra,	
plus an $\bN$-algebra map
	\begin{equation}
		\beta\: L\longmap\Alg{K}(P,K).
	\end{equation}
These properties can of course be expressed in terms of $P$ itself, without quantifying over any
variable algebras, as above with $A$. For example, $\Delta^+$ and $\Delta^\times$ must be
cocommutative and coassociative, and $\Delta^\times$ should codistribute over $\Delta^+$, and so on.

Similarly, the (unique) elements $0$ and $1$ in the $\bN$-algebras $\Hom_K(P,A)$
correspond to $K$-algebra morphisms
	\begin{equation}
	\label{eq:co-units}
		\eps^+\: P\to K, \quad\quad \eps^\times\:P\to K.
	\end{equation}
We will need these later.

We will often use the term \emph{$K$-$L$-bialgebra}\index{$K$-$L$-bialgebra} instead of 
co-$L$-algebra object of
$\Alg{K}$. This is not meant to suggest any relation to the usual meaning of the term
\emph{bialgebra} in the theory of Hopf algebras.
(Every $K$-$L$-bialgebra has two bialgebra structures in the usual
sense---$\Delta^+$ and $\Delta^\times$---but this is just a coincidence of terminology.)

Let $\Alg{K,L}$\index{$\Alg{K,L}$} denote the category of $K$-$L$-bialgebras. A {\em morphism}\index{morphism!of $K$-$L$-bialgebras} $P\to P'$ of $\Alg{K,L}$ is a
$K$-algebra map compatible with the co-operations $\Delta^+$, $\Delta^\times$, and $\beta$. In
other words, the induced natural transformation of set-valued functors must prolong to a natural
transformation of $L$-algebra-valued functors.

\subsection{Plethystic algebra} 
\label{subsec:plethystic-algebra}

In the case where $K$ and $L$ are $\bZ$-algebras, the theory of $K$-$L$-bialgebras was initiated in
Tall--Wraith~\cite{Tall-Wraith} and developed further in Borger--Wieland~\cite{Borger-Wieland:PA}.
It is clear how to extend the general theory developed there to $\bN$-algebras. In almost all
cases, the relevant words from \cite{Borger-Wieland:PA} work as written; at some places, obvious
changes are needed. The reader can also refer to
Stacey--Whitehouse~\cite{Stacey-Whitehouse:TW-monoids}, where the general theory is written down
for general universal-algebraic categories. (Also, see
Bergman--Hausknecht~\cite{Bergman-Hausknecht} for many fascinating case studies taken from
different categories, such as Lie algebras, monoids, groups, possibly noncommutative rings, and many
more.)

Let us list some of the main ideas we will need.
\begin{enumerate}
	\item The {\em composition product}\index{composition!product} is a functor 
		$\vbl\bcp_{L}\vbl\:\Alg{K,L}\times\Alg{L}\to\Alg{K}$. It is characterized by the
		adjunction
			$$
			\Alg{K}(P\bcp_L A,B) = \Alg{L}(A,\Alg{K}(P,B)).
			$$
	\item It has an extension to a functor 
		$\vbl\bcp_{L}\vbl\:\Alg{K,L}\times\Alg{L,K'}\to\Alg{K,K'}$
	\item This gives a monoidal structure (not generally symmetric) on the category $\Alg{K,K}$ of 
		$K$-$K$-bialgebras.	The unit object is $K[e]$, the one representing the identity functor.
	\item A \emph{composition $K$-algebra}\index{composition!algebra}\footnote{It is called a biring triple in 
		\cite{Tall-Wraith},
		a plethory in \cite{Borger-Wieland:PA}, a Tall-Wraith monad object in 
		\cite{Bergman-Hausknecht}, and a Tall-Wraith monoid in \cite{Stacey-Whitehouse:TW-monoids}.
		The term \emph{composition algebra} is both plain and descriptive; 
		so I thought to try it out here. It does however have the drawback in that it already
		exists in the literature with other meanings.} is defined to be a monoid with respect to 
		this monoidal structure. The operation is 
		denoted $\circ$ and the identity is denoted $e$.
	\item An \emph{action}\index{action!of composition algebra}
		of a composition $K$-algebra $P$ on a $K$-algebra $A$ is
		defined to be an action of the monoid object $P$, or equivalently
		of the monad $P\bcp_K\vbl$. We will write $f(a)$ for the image of
		$f\bcp a$ under the action map $P\bcp_K A\to A$. A 
		\emph{$P$-equivariant $K$-algebra}\index{$P$-equivariant algebra} 
		is a $K$-algebra equipped with an action of $P$. When $K=\bN$, we will also use the term 
		\emph{$P$-semiring}\index{semiring!$P$-semiring}.
	\item For any $K$-$L$-bialgebra $P$, we will call the functor it represents its
		\emph{Witt vector}\index{Witt!vector} functor: $W_P=\Alg{K}(P,\vbl)$.
		It takes $K$-algebras to $L$-algebras. When $K=L$,
		a composition structure on $P$ is then equivalent 
		to a comonad structure on $W_P$. When $P$ has a composition structure, then
		$W_P(A)$ has a natural action of $P$, and in this way $W_P$ can be viewed as the
		right adjoint of the forgetful functor from $P$-equivariant $K$-algebras to $K$-algebras.
\end{enumerate}

\subsection{Example: composition algebras and endomorphisms}
An element $\psi$ of a composition $K$-algebra $P$ is 
\emph{$K$-algebra-like}\index{algebra-like element} 
if for all 
$K$-algebras $A$ with an action of $P$, 
the self map $x\mapsto \psi(x)$ of $A$ is a $K$-algebra map. This is equivalent to requiring
	\begin{equation} \label{eq:algebra-like-element}
	\Delta^+(\psi)=\psi\tn 1 + 1\tn \psi, \quad\quad \Delta^\times(\psi)=\psi\tn\psi,
	\quad\quad \beta(c)(\psi)=c,
	\end{equation}
for all $c\in K$. (For comparison, one could say $d\in P$ is $K$-derivation-like if it acts as a
$K$-linear derivation on any $K$-algebra. This can also be expressed directly by saying $d$ is
primitive under $\Delta^+$ and it satisfies the Leibniz rule $\Delta^\times(d)=d\tn e+ e\tn d$ and
the $K$-linearity identity $\beta(c)(d)=0$.)

Now let $G$ be a monoid. Let $P$ be the $K$-algebra freely generated (as
an algebra) by the symbols $\psi_g$, for all $g\in G$. Then $P$ has a unique composition structure such that
each $\psi_g$ is $K$-algebra-like and we have
	$$
	\psi_g\circ \psi_h = \psi_{gh}
	$$
for all $g,h\in G$. Then an action of $P$ on an algebra $A$, in the sense of (5) of \ref{subsec:plethystic-algebra}, is the same as an action of $G$
on $A$, in the usual sense of a monoid map $G\to\Alg{K}(A,A)$. 

In this case, the Witt functor is simply $W_P(A)=A^G$, where $A^G$ has the usual product
algebra structure.

\subsection{Models of co-$\CC$ objects in $\Alg{K}$}
\label{subsec:models-of-co-objects}

Let $\CC$ be a category of the kind considered above.
Let $K\to K'$ be an $\bN$-algebra map, and let $P'$ be a co-$\CC$ object in $\Alg{K'}$. Then a
\emph{model}\index{model} for $P'$ over $K$ (or a \emph{$K$-model}\index{$K$-model}) is a co-$\CC$ object $P$ in $\Alg{K}$ 
together with an isomorphism
$K'\tn_K P \to P'$ of co-$\CC$ objects of $\Alg{K'}$. Then for any $K'$-algebra $A'$, we have
	$$
	W_{P'}(A') = \Alg{K'}(P',A') \longisomap \Alg{K}(P,A') = W_P(A').
	$$
So the Witt vector functor of $P$ extends $W_{P'}$ from $\Alg{K'}$ to $\Alg{K}$.
Conversely, any such extension to a representable functor comes from a unique model of $P'$.

\subsection{Flat models of co-$\CC$-objects in $\Alg{K}$}
\label{subsec:flat-models}
We will be especially interested in finding $K$-models of $P'$ that are flat (over $K$).
Of course, these can exist only when $P'$ is flat over $K'$, but this will be the case in all our examples. 
Further we will only consider the case where the structure map $K\to K'$ is injective.

Under these assumptions, the composition $P\to K'\tn_K P\to P'$ is injective. Conversely, if a subset
$P\subset P'$ admits a flat model structure, then it does so in a unique way. Indeed, since the
induced maps $P^{\tn_K n}\to P'^{\tn_{K'}n}$ are injective, each co-operation $\Delta$ on $P'$
restricts to at most one on $P$. One might then say that being a flat model (when
$K\subseteq K'$) is a property of a given subset of $P'$, rather than a structure on it.

The case where $\CC=\Alg{L}$ will be of particular interest to us. Then
a flat model is just a subset $P\subseteq P'$ such that the following properties hold:
\begin{enumerate}
	\item $P$ is a flat sub-$K$-algebra of $P'$,
	\item the induced map $K'\tn_{K}P \to P'$ is a bijection,
	\item $\Delta^+(P)\subseteq P\tn_{K} P$ and $\eps^+(P)\subseteq K$,
	\item $\Delta^{\times}(P)\subseteq P\tn_{K} P$ and $\eps^\times(P)\subseteq K$,
	\item $\beta(L)\subseteq \Alg{K}(P,K)$, where $\Alg{K}(P,K)$ is regarded as a subset
		of $\Alg{K'}(P',K')$.
\end{enumerate}
When $L=\bN$, the last condition is always satisfied. (Alternatively, the conditions on the
co-units $\eps^+$ and $\eps^\times$ are also redundant but not in the absence of (5).)

For other categories $\CC$, it is usually clear how to modify these conditions. For example, if
$\CC=\Mod{\bN}$, one would drop (4) and (5). 

\subsection{Models of composition algebras}
\label{subsec:models-of-composition-algebras}
When $P'$ is a composition $K'$-algebra, we will usually want to descend the composition structure
as well. Then a $K$-model of $P'$ (as a composition algebra) is a composition $K$-algebra $P$
together with an action of $P$ on $K'$ and an isomorphism $K'\tn_K P\to P'$ of composition
$K'$-algebras. Giving such a model is equivalent to extending $W_{P'}$ to a representable comonad on $\Alg{K}$.

A flat model (when $K\subseteq K'$) is then just a flat sub-$K$-algebra $P\subseteq P'$ satisfying 
conditions (1)--(5) above plus
\begin{enumerate}
	\item[(6)] $P\circ P\subseteq P$ and $e\in P$.
\end{enumerate}
So again, if $P$ admits such a structure, it does so in a unique way.

\section{The composition structure on symmetric functions over $\bN$}

The purpose of this section is to give two different $\bN$-models of $\Lambda_\bZ$, the composition
ring of symmetric functions. Since $\Lambda_\bZ$ represents the usual big Witt vector functor,
these give extensions of the big Witt vector functor to $\bN$-algebras.

Our treatment is broadly similar to Macdonald's~\cite{Macdonald:SF}. He discusses the co-additive
structure in example 25 of I.5, the co-multiplicative structure in example 20 of I.7, and plethysm
in I.8.

Let $K$ be an $\bN$-algebra.

\subsection{Conventions on partitions}
\label{subsec:partition-conventions}

We will follow those of Macdonald, p. 1~\cite{Macdonald:SF}.
So, a {\em partition}\index{partition} is an element 
	$$
	\lambda=(\lambda_1,\lambda_2,\dots)\in\bN\oplus\bN\oplus\cdots
	$$
such that $\lambda_1\geq \lambda_2\geq\cdots$. As is customary, we will allow ourselves to
omit any number of zeros, brackets, and commas and to use exponents to 
represent repetition. So for example we have $(3,2,2,1,0,\dots)=32^210=32^21$ and $(0,\dots)=0$.

The \emph{length}\index{length (of a partition)} of $\lambda$ is the smallest $i\geq 0$ such that
$\lambda_{i+1}=0$. The \emph{weight}\index{weight (of a partition)} of $\lambda$ is $\sum_i
\lambda_i$ and is denoted $|\lambda|$. We also say $\lambda$ is a partition of its weight.

\subsection{$\Psi_K$}
Let $\Psi_K$ denote the composition $K$-algebra associated to the multiplicative monoid  
of positive integers. So $\Psi_K = K[\psi_1,\psi_2,\dots]$, where each $\psi_n$ is
$K$-algebra-like and we have $\psi_m\circ \psi_n = \psi_{mn}$.

We will be interested in (flat) models of $\Psi_\bQ$ over smaller subalgebras, especially $\bQpl$,
$\bZ$ and $\bN$. There are the obvious models $\Psi_\bQpl$, $\Psi_\bZ$, and $\Psi_\bN$, but we will
be more interested in larger ones.

\subsection{Symmetric functions}

Let $\Lambda_K$ denote the usual $K$-algebra of symmetric functions in infinitely many
variables $x_1,x_2,\dots$ with coefficients in $K$. (See Macdonald, p.\ 19~\cite{Macdonald:SF}.) 
More precisely, $\Lambda_K$ is the set
of formal series $f(x_1,x_2,\dots)$ such that the terms of $f$ have bounded degree and for all $n$,
the series $f(x_1,\dots,x_n,0,0,\dots)$ is a polynomial which is invariant under permuting
the variables $x_1,\dots,x_n$. 

It is clear that $\Lambda_K$ is freely generated as a $K$-module by
the \emph{monomial}\index{monomial symmetric function}\index{symmetric function!monomial} symmetric functions $m_\lambda$, where $\lambda$ ranges over all partitions and
where
	$$
	m_\lambda = \sum_{\alpha}x_1^{\alpha_1}x_2^{\alpha_2}\cdots
	$$
where $\alpha$ runs over all permutations of $\lambda$ in $\bN^\infty$. In particular, we have 
$K\tn_{\bN}\Lambda_\bN=\Lambda_K$. When $K=\bN$, this is the unique basis of $\Lambda_\bN$, up to 
unique isomorphism on the index set.

It is well known that when $K$ is a ring, $\Lambda_K$ is freely generated as a $K$-algebra by 
the \emph{complete}\index{complete symmetric function}\index{symmetric function!complete} symmetric functions $h_1,h_2,\dots$, where
	$$
	h_n := \sum_{i_1\leq\cdots\leq i_n}x_{i_1}\cdots x_{i_n} = \sum_{|\lambda|=n}m_\lambda.
	$$
Alternatively, if we write
	$$
	\psi_n = x_1^n+x_2^n+\cdots,
	$$
then the induced map
	\begin{equation} \label{map:coghost}
		\Psi_K = K[\psi_1,\psi_2,\dots] \longmap \Lambda_K
	\end{equation}
is an injection when $K$ is a flat $\bZ$-algebra and is a bijection when $K$ is a $\bQ$-algebra. In
particular, $\Lambda_\bN$ and $\Lambda_\bZ$ are models for $\Psi_\bQ=\Lambda_\bQ$. The elements $\psi_n$ have
several names: the \emph{Adams}\index{Adams symmetric function}\index{symmetric function!Adams,
Frobenius, power sum}, \emph{Frobenius}\index{Frobenius!symmetric function}, and
\emph{power-sum}\index{power-sum symmetric function} symmetric functions.

\subsection{Remark: $\Lambda_\bN$ is not free as an $\bN$-algebra}
\label{subsec:Lambda-is-not-a-free-algebra}

Indeed, all $m_\lambda$ are indecomposable additively and, one checks, multiplicatively---except $m_0$, which
is invertible. Therefore any generating set of $\Lambda_\bN$ as an $\bN$-algebra must contain all the
$m_\lambda$ but $m_0$. But they are not algebraically independent, because any monomial in them is a linear
combination of the others. For example $m_{1}^2=m_{2}+2m_{1,1}$.

\subsection{Elementary and Witt symmetric functions}
\label{subsec:elem-and-Witt-symm-functions}
At times, we will use other families of symmetric functions, such as 
the \emph{elementary}\index{elementary symmetric function}\index{symmetric function!elementary} symmetric functions
	$$
	e_n = \sum_{i_1<\cdots<i_n}x_{i_1}\cdots x_{i_n}.
	$$
and the less well-known \emph{Witt}\index{Witt!symmetric function}\index{symmetric function!Witt} symmetric functions $\theta_1,\theta_2,\dots$, which are 
determined by the relations
	\begin{equation}
	\label{eq:Frobenius-Witt-relation}
		\psi_n = \sum_{d\mid n}d\theta_d^{n/d},
	\end{equation}
for all $n\geq 1$.
Probably the most concise way of relating them all is with generating functions
in $1+t\Lambda_\bQ[[t]]$:
	\begin{equation}
	\label{eq:generating-function-identity}
		\prod_{d\geq 1} \big(1-\theta_d t^d\big)^{-1} 
			= \exp\big(\sum_{j\geq 1}\frac{\psi_j}{j}t^j\big)
			= \sum_{i\geq 0} h_i t^i
			= \big(\sum_{i\geq 0} e_i (-t)^i\big)^{-1}.
	\end{equation}
Indeed, one can check that each expression equals $\prod_j (1-x_j t)^{-1}$. 

One notable consequence of (\ref{eq:generating-function-identity}) is that the $\theta_d$ generate $\Lambda_\bZ$ freely as a ring:
	$$
	\Lambda_{\bZ} = \bZ[\theta_1,\theta_2,\dots].
	$$
Another is that we have
	\begin{equation}
	\label{eq:complete-symms-are-psi-positive}
		\bQpl[h_1,h_2,\dots] \subseteq \Psi_\bQpl.
	\end{equation}
Note that such a containment is a special property of the complete symmetric functions $h_n$; it is not shared by the elementary symmetric functions $e_n$.

The Witt symmetric functions are rarely encountered outside the literature on Witt vectors. The
following are some decompositions in more common bases:
\begin{align*}
	\theta_1 &= \psi_1 = m_1 = e_1 = h_1 = s_1 \\
	\theta_2 &= (-\psi_1^2+\psi_2)/2 = -m_{1^2} = -e_2 = -h_1^2+h_2 = -s_{1^2} \\
	\theta_3 &= (-\psi_1^3+\psi_3)/3 = -2m_{1^3}-m_{21} = -e_2e_1+e_3 = -h_2h_1+h_3 = -s_{21} \\
	\theta_4 &= (-3\psi_1^4 + 2\psi_2\psi_1^2 - \psi_2^2 + 2\psi_4)/8 
		= -9m_{1^4} - 4m_{21^2} - 2m_{2^2} - m_{31}\\
	 	&= -e_2e_1^2 + e_3e_1 - e_4 = -h_1^4 + 2h_2h_1^2 - h_2^2 - h_3h_1 + h_4 \\
		&= -s_{1^4} - s_{21^2} - s_{2^2} - s_{31} \\
	\theta_5 &= (-\psi_1^5+\psi_5)/5 
		= -24m_{1^5} - 12m_{21^3} - 6m_{2^21} - 4m_{31^2} - 2m_{32} - m_{41} \\
		&= -e_2e_1^3 + e_2^2e_1 + e_3e_1^2 - e_3e_2 - e_4e_1 + e_5 \\
		&= -h_2h_1^3 + h_2^2h_1 + h_3h_1^2 - h_3h_2 - h_4h_1 + h_5 \\
		&= -s_{21^3} - s_{2^21} - s_{31^2} - s_{32} - s_{41} \\
	\theta_6
	 	&= (-13\psi_1^6 - 9\psi_2\psi_1^4 + 9\psi_2^2\psi_1^2 - 3\psi_2^3 + 8\psi_3\psi_1^3
		-4\psi_3^2 + 12\psi_6)/72 \\
		&=	-130m_{1^6} -68m_{21^4}-35m_{2^21^2}-18m_{2^3} - 24m_{31^3} - 12m_{321} - 4m_{3^2} \\
		& \quad - 6m_{41^2} - 3m_{42} - m_{51} \\
		&= -e_2e_1^4 + e_2^2e_1^2 + e_3e_1^3 - e_3e_2e_1 - e_4e_1^2 + e_4e_2 + e_5e_1 - e_6 \\
		&= -h_2^2h_1^2 - h_3h_1^3 + 3h_3h_2h_1 - h_3^2 + h_4h_1^2 - h_4h_2 - h_5h_1 + h_6 \\
		&= -s_{2^21^2} - s_{2^3} - 2s_{31^3} - 3s_{321} - s_{3^2} - 2s_{41^2} - 2s_{42} - s_{51}
\end{align*}
Observe that the coefficients in some bases are noticeably smaller than in others. It would be interesting to make this precise.

\begin{proposition}\label{pro:lambda-structure-maps}
	Consider the composition algebra structure on $\Lambda_\bQ$ induced by
	the isomorphism $\Psi_\bQ\to\Lambda_\bQ$ of (\ref{map:coghost}).
	Then the structure maps on $\Lambda_\bQ$ satisfy the following:
	\begin{align*}
		\Delta^+\: f(\dots,x_i,\dots)&\mapsto f(\dots,x_i\tn 1,1\tn x_i,\dots) \\
		\Delta^{\times}\: f(\dots,x_i,\dots)&\mapsto f(\dots,x_i\tn x_j,\dots) \\
		\eps^+\: f(\dots,x_i,\dots) &\mapsto f(0,0,\dots) \\
		\eps^\times\: f(\dots,x_i,\dots) &\mapsto f(1,0,0,\dots) \\
		f\circ g &= f(y_1,y_2,\dots),
	\end{align*}
	for all $f\in\Lambda_\bQ$ and $g\in\Lambda_\bN$, where
	$g=y_1+y_2+\cdots$ with each $y_j$ a monomial in the variables $x_i$ with coefficient $1$. 	
\end{proposition}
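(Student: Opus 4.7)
The plan is to exploit that $\Psi_\bQ = \bQ[\psi_1,\psi_2,\dots]$ is freely generated as a $\bQ$-algebra, so any $\bQ$-algebra map out of $\Lambda_\bQ \cong \Psi_\bQ$ is determined by its values on the generators $\psi_n = \sum_i x_i^n$. Each of the co-operations $\Delta^+$, $\Delta^\times$, $\eps^+$, $\eps^\times$ is by definition a $\bQ$-algebra homomorphism, and plethysm $f \mapsto f\circ g$ is a $\bQ$-algebra homomorphism in $f$ with $g$ fixed. So in all five cases it suffices to verify the displayed formula for $f=\psi_n$, check that each right-hand side defines a $\bQ$-algebra map in $f$, and then invoke uniqueness.

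Concretely, I would first observe that the substitutions on the right-hand sides make sense: since each symmetric function $f$ has bounded degree and is invariant under permutation of its variables, expressions like $f(\dots,x_i\tn 1,1\tn x_i,\dots)$ and $f(\dots,x_i\tn x_j,\dots)$ are well defined after sorting the infinite family of inputs in any order, and each yields a $\bQ$-algebra map in $f$. Then I would verify on $\psi_n$: for instance the co-addition formula gives $\sum_i(x_i\tn 1)^n + \sum_i(1\tn x_i)^n = \psi_n\tn 1 + 1\tn\psi_n$, while the co-multiplication formula gives $\sum_{i,j}(x_i\tn x_j)^n = \psi_n\tn\psi_n$, and the two co-units give $0$ and $1$ respectively. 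By the definition of the composition algebra structure on $\Psi_\bQ$ via the multiplicative monoid of positive integers (so that each $\psi_n$ is $\bQ$-algebra-like with $\psi_m\circ\psi_n=\psi_{mn}$) these are exactly the required values, as recorded in~(\ref{eq:algebra-like-element}).

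For the plethysm formula, the input $g\in\Lambda_\bN$ has a well-defined expansion $g = y_1 + y_2 + \cdots$ as a (locally finite) sum of distinct monomials in the $x_i$ with coefficient $1$, so $f(y_1,y_2,\dots)$ is a legitimate substitution producing an element of the appropriate algebra, and it is $\bQ$-algebra linear in $f$. Hence it is enough to check agreement for $f=\psi_n$, where $\psi_n(y_1,y_2,\dots)=\sum_j y_j^n$. Taking $g=\psi_m=\sum_i x_i^m$ gives $\sum_i(x_i^m)^n = \psi_{mn}$, matching $\psi_n\circ\psi_m=\psi_{nm}$. Since $\Psi_\bN = \bN[\psi_1,\psi_2,\dots]$ generates $\Lambda_\bN$ additively after tensoring with $\bQ$, and since both sides of the plethysm formula are additive in $g$, this already pins down $f\circ g$ for all $g\in\Lambda_\bN$ once we know it agrees on each $\psi_m$.

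The one place that requires a little care, and which I would flag as the main obstacle, is justifying that these substitution prescriptions are genuinely well defined on $\Lambda_\bQ$ (not just on $\bQ[[x_1,x_2,\dots]]$) and that the infinite substitutions land in the correct tensor products or polynomial rings rather than completions. This is where the ``bounded degree'' condition on symmetric functions plays its role: each $f\in\Lambda_\bQ$ is a finite $\bQ$-linear combination of products of $\psi_n$'s, so after reducing to the $\psi_n$ the computations are finite sums, and the formulas extend uniquely by $\bQ$-algebra linearity. Once this is set up, the verification on generators reduces to the algebra-like identities~(\ref{eq:algebra-like-element}) for the $\psi_n$, and the proposition follows.
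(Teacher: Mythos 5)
Your overall strategy is the same as the paper's: every co-operation and the map $f\mapsto f\circ g$ (for fixed $g$) is a $\bQ$-algebra map, so one reduces to the generators $\psi_n$ of $\Psi_\bQ=\Lambda_\bQ$ and checks the formulas there against~(\ref{eq:algebra-like-element}) and the law $\psi_m\circ\psi_n=\psi_{mn}$. The verifications for $\Delta^+$, $\Delta^\times$, $\eps^+$, $\eps^\times$ are fine, as is the reduction to $f=\psi_n$ in the plethysm formula.

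There is, however, a genuine gap in the final reduction in the $g$-variable. You write that since $\Psi_\bN$ generates $\Lambda_\bN$ additively after tensoring with $\bQ$ and both sides of the plethysm formula are additive in $g$, agreement on each $g=\psi_m$ pins down $f\circ g$ for all $g$. That is not enough: the elements $\psi_1,\psi_2,\dots$ do not span $\Lambda_\bQ$ additively --- only the \emph{monomials} $\psi_{m_1}\psi_{m_2}\cdots$ do --- so additivity in $g$ together with the values at the $\psi_m$ determines $f\circ g$ only on the $\bQ$-linear span of the $\psi_m$, a proper subspace. What you need, and what the paper uses, is that for $f=\psi_n$ the map $g\mapsto\psi_n\circ g$ is a full $\bQ$-algebra homomorphism (this is exactly the algebra-like property of $\psi_n$), and likewise that the substitution rule $g=\sum_j y_j\mapsto\sum_j y_j^n$ is multiplicative in $g$ (if $g=\sum_j y_j$ and $h=\sum_k z_k$ then $gh=\sum_{j,k}y_jz_k$ and $\sum_{j,k}(y_jz_k)^n=\bigl(\sum_j y_j^n\bigr)\bigl(\sum_k z_k^n\bigr)$). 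With multiplicativity in hand, agreement on the algebra generators $\psi_m$ does propagate to all of $\Lambda_\bQ$, and the argument closes. So the fix is short, but as written the step would fail.
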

\begin{proof}
	The maps $\Delta^+$, $\Delta^\times$, $\eps^+$, $\eps^\times$ displayed above
	are manifestly $\bQ$-algebra maps. Therefore to show they agree with the corresponding
	structure maps,
	it is enough to consider elements $f$ that run over a set of generators, such as the $\psi_n$.
	In this case, one can calculate the image of $\psi_n$ under the maps displayed above and 
	observe that it agrees with the image of $\psi_n$ under the structure maps, by 
	(\ref{eq:algebra-like-element}).
	
	Similarly, for any fixed $g\in\Lambda_\bN$, the map $f\mapsto f\circ g$ displayed above is
	a $\bQ$-algebra map. So it is again enough to assume $f=\psi_m$. 
	Then $\psi_m\circ g\mapsto g$ is an $\bN$-algebra map $\Lambda_\bN\to\Lambda_\bQ$,
	and  to show this map
	agrees with the rule displayed above, it is enough to do so after
	allowing coefficients in $\bQ$, since $\Lambda_\bN$ is free. Then the $\psi_n$ are generators,
	and so it is enough to assume $g=\psi_n$.
	We are left to check $\psi_m\circ\psi_n=\psi_{mn}$, which is indeed
	the composition law on $\Psi_\bQ$.
\end{proof}

\begin{corollary}\label{cor:models-of-lambda-Q}
	$\Lambda_\bN$ and $\Lambda_\bZ$ are models of the composition algebra 
	$\Lambda_\bQ$ in a unique way.
\end{corollary}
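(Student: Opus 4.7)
The plan is to invoke the criterion for flat models of composition algebras spelled out in subsection \ref{subsec:models-of-composition-algebras}: a flat sub-$K$-algebra $P\subseteq\Lambda_\bQ$ (for $K\in\{\bN,\bZ\}$) inherits a unique composition-algebra structure from $\Lambda_\bQ$ provided it is closed under the co-operations $\Delta^+,\Delta^\times,\eps^+,\eps^\times$, contains the composition identity $e$, and satisfies $P\circ P\subseteq P$. The uniqueness claim in the corollary is thereby automatic, since the text has already recorded that for flat submodels over a subring, the model structure is a property, not a structure.

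First I would check the underlying module-theoretic conditions. Both $\Lambda_\bN$ and $\Lambda_\bZ$ are free as $\bN$- resp.\ $\bZ$-modules on the monomial basis $\{m_\lambda\}$, hence flat; and the identities $\bQ\tn_\bN\Lambda_\bN=\Lambda_\bQ=\bQ\tn_\bZ\Lambda_\bZ$ recorded in the text give the base-change condition. Closure under $\Delta^+,\Delta^\times,\eps^+,\eps^\times$ is then read off Proposition~\ref{pro:lambda-structure-maps}: each is a substitution of the $x_i$ by variables (or zero/one) with coefficient $1$, and such a substitution manifestly sends a series with nonnegative integer (resp.\ integer) coefficients to one of the same type. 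The identity $e=\psi_1$ lies in $\Lambda_\bN\subseteq\Lambda_\bZ$.

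The main obstacle is closure under plethysm. Proposition~\ref{pro:lambda-structure-maps} gives the formula $f\circ g=f(y_1,y_2,\dots)$ only when $g\in\Lambda_\bN$ is written as a sum of monomials with coefficient $1$. This is already enough to conclude $\Lambda_\bN\circ\Lambda_\bN\subseteq\Lambda_\bN$ and $\Lambda_\bZ\circ\Lambda_\bN\subseteq\Lambda_\bZ$, since the substitution formula preserves nonnegativity and integrality of coefficients. But for $\Lambda_\bZ\circ\Lambda_\bZ\subseteq\Lambda_\bZ$, one must handle $g\in\Lambda_\bZ\setminus\Lambda_\bN$. My approach would be to reduce to the generators: since $\Lambda_\bZ=\bZ[h_1,h_2,\dots]$ and $\circ$ is a $\bZ$-algebra map in its first argument, it suffices to show $h_n\circ g\in\Lambda_\bZ$ for every $g\in\Lambda_\bZ$. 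Writing $g=g_+-g_-$ with $g_\pm\in\Lambda_\bN$ and using the additive coproduct $\Delta^+(h_n)=\sum_{i+j=n}h_i\tn h_j$, one obtains
\[
h_n\circ(g_+-g_-)=\sum_{i+j=n}(h_i\circ g_+)\cdot\bigl(h_j\circ(-g_-)\bigr);
\]
combining this with the standard $\lambda$-ring identity $h_j\circ(-x)=(-1)^j\,e_j\circ x$ reduces everything to plethysms of the form $h_i\circ g_+$ and $e_j\circ g_-$ with both arguments in $\Lambda_\bN$, which lie in $\Lambda_\bN\subseteq\Lambda_\bZ$ by the already-verified case. This closes the proof for $\Lambda_\bZ$.

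Finally, the uniqueness of the composition-algebra structure on each of $\Lambda_\bN$ and $\Lambda_\bZ$ compatible with that of $\Lambda_\bQ$ is, as noted, a direct consequence of the general rigidity principle for flat models over a subalgebra stated in subsection~\ref{subsec:flat-models}: the inclusion $P^{\tn_K n}\hookrightarrow P'^{\,\tn_{K'}n}$ forces any compatible co-operation on $P$ to be unique.
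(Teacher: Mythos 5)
Your proof is correct, and it follows the paper's overall strategy (reduce to the checklist (1)--(6) of \ref{subsec:flat-models} and \ref{subsec:models-of-composition-algebras}, with uniqueness coming from the rigidity of flat submodels); the only step with real content is closure of $\Lambda_\bZ$ under plethysm, and there you take a genuinely different route. The paper argues that since $\Lambda_\bN$ is already a composition $\bN$-algebra and $\Lambda_\bZ=\bZ\tn_\bN\Lambda_\bN$, the general base-change formalism reduces everything to showing that $\bZ$ is a $\Lambda_\bN$-semiring, i.e.\ that $f(-1)\in\bZ$ for all $f\in\Lambda_\bZ$; this is then checked on the generators $h_n$ via $h_1(-1)=-1$ and $h_n(-1)=0$ for $n\geq 2$. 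You instead verify the closure directly: reduce to $f=h_n$, split $g=g_+-g_-$, expand with $\Delta^+(h_n)=\sum_{i+j=n}h_i\tn h_j$, and dispose of the negative part with the negation rule $h_j\circ(-x)=(-1)^j\,e_j\circ x$ (which you should note follows from the identity $\big(\sum_i h_it^i\big)\big(\sum_i e_i(-t)^i\big)=1$ of (\ref{eq:generating-function-identity}) applied under the algebra map $f\mapsto f\circ x$). The two arguments are essentially dual to one another --- your negation rule evaluated at $x=e$ recovers exactly the values $h_j(-1)$ the paper computes --- but yours is more hands-on and self-contained, while the paper's leverages the plethystic base-change machinery and so is shorter once that machinery is granted. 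Both are complete; no gap.
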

\begin{proof}
	Since both $\Lambda_\bN$ and $\Lambda_\bZ$ are free, it is enough to check properties (1)--(6)
	of~(\ref{subsec:flat-models}) and~(\ref{subsec:models-of-composition-algebras}).
	It is immediate from (\ref{pro:lambda-structure-maps}) that all the structure maps preserve
	$\Lambda_\bN$, which finishes the proof for it. 
	
	It is also clear that all
	the structure maps preserve $\Lambda_\bZ$, with the exception of the one expressing that
	$\Lambda_\bZ$ is closed
	under composition. We will now show this.
	Because $\Lambda_\bN$ is a composition algebra and we have 
	$\Lambda_\bZ = \bZ\tn_\bN\Lambda_\bN$, it is enough to show $f(\bZ)\subseteq \bZ$ for all 
	$f\in\Lambda_\bZ$, and hence $f(-1)\in\bZ$ for all such $f$.
	To do this, it is enough to restrict to the case where $f$ ranges over
	a set of algebra generators, such as the $h_n$. But since $\psi_n(-1)=-1$, the 
	identity~(\ref{eq:generating-function-identity}) implies $h_n(-1)=0$ for $n\geq 2$ and 
	$h_1(-1)=-1$. So we have $h_n(-1)\in\bZ$ for all $n$.
\end{proof}

\subsection{Explicit description of a $\Lambda_\bN$-action}
\label{subsec:explicit-lambda_N-actions}

For the convenience of the reader, let us spell out what it means for the composition algebra
$\Lambda_\bN$ to act on an $\bN$-algebra $A$ in elementary terms.\index{semiring!$\Lambda_\bN$-semiring}\index{action!$\Lambda_\bN$-action}

For each partition $\lambda$, there is a set map
	$$
	m_\lambda\:A \longmap A
	$$
such that the following identities hold
	\begin{align*}
	m_0(x) &= 1 \\
	(m_\lambda m_\mu)(x) &= m_\lambda(x) m_\mu(x) \\
	m_\lambda(0) &= \eps^+(m_\lambda) \\
	m_\lambda(x+y) &= \Delta^+(m_\lambda)(x,y) 
		:= \sum_{\mu,\nu} q^\lambda_{\mu\nu} m_\mu(x)m_\nu(y)\\
	m_\lambda(1) &= \eps^\times(m_\lambda) \\
	m_\lambda(xy) &= \Delta^\times(m_\lambda)(x,y)
	 	:= \sum_{\mu,\nu} r^\lambda_{\mu\nu} m_\mu(x)m_\nu(y) \\
	m_1(x) &= x \\
	m_\lambda(m_\mu(x)) &= (m_\lambda\circ m_\mu)(x).
	\end{align*}
The notation here requires some explanation. The expression $(m_\lambda m_\mu)(x)$, or more generally $f(x)$, is defined by
	$$
	f(x) = \sum_\nu f_\nu m_\nu(x) \in A,
	$$
where the numbers $f_\nu\in\bN$ are defined by the equation $f=\sum_\nu f_\nu m_\nu$;
and $q^\lambda_{\mu\nu}$ and $r^\lambda_{\mu\nu}$ are the structure coefficients in $\bN$
for the two coproducts:
	\begin{align*}
		\Delta^+(m_\lambda) &= \sum_{\mu,\nu} q^\lambda_{\mu\nu} m_\mu\tn m_\nu\\
		\Delta^\times(m_\lambda) &= \sum_{\mu,\nu} r^\lambda_{\mu\nu} m_\mu\tn m_\nu. 
	\end{align*}

\subsection{Example: the toric $\Lambda_\bN$-structure on monoid algebras}
Let $A$ be a commutative monoid, written multiplicatively.
The monoid algebra $\bN[A]$ is the set of finite formal sums $\sum_{i=1}^n[a_i]$,
where each $a_i\in A$, and multiplication is the linear map satisfying the law $[a][b]=[ab]$.
Then for any $f\in\Lambda_\bN$ define
	\begin{equation}
	\label{eq:lambda-action-on-monoid-algebra}
	f(\sum_{i=1}^n [a_i]) := f([a_1],\dots,[a_n],0,0,\dots).
	\end{equation}
The right-hand side denotes the substitution $x_i=[a_i]$ into the symmetric function $f$.
To show this law defines an action of $\Lambda_\bN$ on $\bN[A]$, it is enough to observe
that~(\ref{eq:lambda-action-on-monoid-algebra}) is the restriction of an action of
a larger composition algebra on a larger $\bN$-algebra. By~(\ref{pro:lambda-structure-maps}), 
it is the restriction of the action of $\Lambda_\bQ$ on 
$\bQ[A]:=\bQ\tn_\bN \bN[A]$ determined by $\psi_n([a])=[a^n]$ for all $a\in A$, $n\geq 1$.

We call the $\Lambda_\bN$-structure on $\bN[A]$ the \emph{toric}\index{toric!
$\Lambda_\bN$-structure} $\Lambda_\bN$-structure because $\Spec(\bZ[A])$ is a toric variety and the
$\Lambda_\bN$-structure extends in a canonical way to any nonaffine toric variety, once this concept
is defined. (See~\cite{Borger:LRFOE}, for example.)

\subsection{Example: the Chebyshev line}

When $A=\bZ$ above, we have $\bN[A]=\bN[x^{\pm 1}]$. Let $B$ denote the subalgebra of $\bN[A]$
spanned by $1,x+x^{-1},x^2+x^{-2},\dots$. Then $B$ is the set of invariants under the involution
$\psi_{-1}$ of $\bN[x^{\pm 1}]$ defined by $x\mapsto x^{-1}$. Since $\psi_{-1}$ is a
$\Lambda_\bN$-morphism and since the category of $\Lambda_\bN$-algebras has all limits, $B$ is a
sub-$\Lambda_\bN$-semiring. It is a model over $\bN$ of the $\Lambda_\bZ$-ring $\bZ[x+x^{-1}]$
called the {\em Chebyshev line}\index{Chebyshev line} in~\cite{Borger:LRFOE}, but it is not isomorphic to the $\bN$-algebra
$\bN[y]$. In fact, it is not even finitely generated as an $\bN$-algebra.

\subsection{Flatness for $\Lambda_\bN$-semirings}
In~\cite{Borger-deSmit:Integral-models}, De Smit
and I proved some classification results about reduced $\Lambda$-rings that are finite flat over
$\bZ$. It would be interesting to know if there are similar results over $\bN$.
As D.\ Grinberg pointed out to me, it is not hard to construct
non-toric $\Lambda_\bN$-semirings which are flat and
finitely presented over $\bN$. One example is the sub-$\bN$-algebra
of $\psi_{-1}$-invariants of the monoid algebra $\bN[x]/(x^3=1)$. It is isomorphic to
$\bN[y]/(y^2=y+2)$, via the map $y\mapsto x+x^2$.

\subsection{Example: convergent exponential monoid algebras}

Let $B$ be a submonoid of $\bC$ under addition, and let $\Nhat{e^B}$ denote the set of formal series
$\sum_{i=1}^\infty [e^{b_i}],$ where $b_i\in B$, such that the complex series $\sum_i e^{b_i}$
converges absolutely. (Of course, $e$ now denotes the base of the natural logarithm.) We identify
series that are the same up to a permutation of the terms. More formally, $\Nhat{e^B}$ is the set
of elements $(\dots,n_b,\dots)\in\bN^B$ such that the sums $\sum_{b\in S}n_b |e^b|$ are bounded as
$S$ ranges over all finite subsets of $B$. (Also, note that the notation is slightly abusive in
that $\Nhat{e^B}$ depends on $B$ itself and not just its image under the exponential map.) It is a
sub-$\bN$-module of $\bN^B$ and has a multiplication defined by
	$$
	\Big(\sum_{i} [e^{b_i}]\Big)\Big(\sum_{j} [e^{c_j}]\Big)
	= \Big(\sum_{i,j} [e^{b_i+c_j}]\Big),
	$$
under which it becomes an $\bN$-algebra. It is the unique multiplication extending that
on the monoid algebra $\bN[B]$ which is continuous, in some suitable sense.

For integers $n\geq 1$, define
	$$
	\psi_n\Big(\sum_i[e^{b_i}]\Big) = \sum_i[e^{nb_i}].
	$$
This is easily seen to be an element of $\Nhat{e^B}$, and hence the $\psi_n$ form a commuting
family of $\bN$-algebra endomorphisms of $\Nhat{e^B}$.
The induced endomorphisms of $\bQ\tn_\bN\Nhat{e^B}$ prolong to a unique action of $\Lambda_\bQ$.
It follows that for any symmetric function $f\in\Lambda_\bQ$, we have
	\begin{equation}
	\label{eq:lambda-action-on-convergents}
	f\big(\sum_i [a_i]\big) = f([a_1],[a_2],\dots) \in \bQ\tn_\bN\bN^B.
	\end{equation}
Indeed, it is true when $f=\psi_n$; then by the multiplication law above, it
is true when $f$ is a monomial in the $\psi_n$, and hence for any $f\in\Psi_\bQ=\Lambda_\bQ$.
It follows that for any $f\in\Lambda_\bN$, the element $f(\sum_i[a_i])$ lies in $\bN^B\cap
(\bQ\tn_\bN\Nhat{e^B})=\Nhat{e^B}.$ So $\Nhat{e^B}$ inherits an action of $\Lambda_\bN$ from
$\bQ\tn_\bN\Nhat{e^B}$.

Observe that if $B$ is closed under multiplication by any real $t\geq 1$, then we have operators $\psi_t$, for all $t\geq 1$:
	$$
	\psi_t\big(\sum_i [e^{b_i}]\big) = \sum_i [e^{tb_i}].
	$$
Indeed the series $\sum_i e^{tb_i}$ is absolutely convergent if $\sum_i e^{b_i}$ is.
Thus $\Spec(A)$ has a flow interpolating the Frobenius operators. For further remarks, see~(\ref{subsec:positivity-philosophy}).

\subsection{Example: convergent monoid algebras}
\label{subsec:convergent-monoid-alg}
Now let $A$ be a submonoid of the positive real numbers under multiplication, and let
$\log A=\setof{b\in\bR}{e^b\in A}$. Then define
	$$
	\Nhat{A} = \Nhat{e^{\log A}}.
	$$

\subsection{Remark: non-models for $\Lambda_\bZ$ over $\bN$}
\label{subsec:non-models}
The $\bN$-algebra $\bN[e_1,\dots]$ is a model for $\Lambda_\bZ$ as a co-$\bN$-module object
because the expression $$\Delta^+(e_n)=\sum_{i+j=n} e_i\tn e_j$$ has no negative coefficients. But it is not a 
model as a co-$\bN$-algebra object because the analogous expression for $\Delta^\times$ does at places have negative coefficients:
	$$
	\Delta^\times(e_2) = e_2\tn e_1^2 + e_1^2\tn e_2 - 2e_2\tn e_2.
	$$

The complete symmetric functions
	$$
	h_n = \sum_{i_1\leq\cdots\leq i_n}x_{i_1}\cdots x_{i_n}
	$$
behave similarly. The $\bN$-algebra $\bN[h_1,\dots]$ is a model for $\Lambda_\bZ$ over $\bN$ 
as a co-$\bN$-module object, because $\Delta^+(h_n)=\sum_{i+j=n}h_i\tn h_j$, but not
as a co-$\bN$-algebra object, because 
$\Delta^{\times}(h_2)=h_1^2\tn h_1^2-h_1^2\tn h_2 - h_2\tn h_1^2 + 2h_2\tn h_2$.

For Witt symmetric functions, we have
	\begin{align*}
		\Delta^+(\theta_p) &= \theta_p\tn 1 + 1\tn\theta_p - 
			\sum_{i=1}^{p-1}\frac{1}{p}\binom{p}{i}\theta_1^i \tn \theta_1^{p-i} \\
		\Delta^\times(\theta_p) &= \theta_p\tn\theta_1^p + \theta_1^p\tn\theta_p + 
			p\theta_p\tn\theta_p,	
	\end{align*}
for any prime $p$. So $\bN[\theta_1,\dots]$ is not even a model as a 
co-$\bN$-module object. Using $-\theta_p$ instead of $\theta_p$, removes the sign from 
the first formula but adds one in the second. We will see in section~\ref{sec:p-typical-model}
that it is possible to circumvent this problem, at least if we care only about a single prime.

\section{The Schur model for $\Lambda_\bZ$ over $\bN$}

\subsection{Schur functions and $\Sch$}
For any partition $\lambda=(\lambda_1,\dots,\lambda_r)$, put
	\begin{equation}
		s_\lambda := \det(h_{\lambda_i-i+j}) \in \Lambda_\bZ,
	\end{equation}
where as usual $h_0=1$ and $h_n=0$ for $n<0$. For example, 
	\begin{equation}
		s_{321}=\det\left(
		\begin{array}{lll}
			h_3 & h_4 & h_5 \\
			h_1 & h_2 & h_3 \\
			0 & 1 & h_1
		\end{array}
		\right).
	\end{equation}
Such symmetric functions are called \emph{Schur functions}\index{Schur function}. They have simple
interpretations in terms of representation theory of general linear and symmetric groups, and
several of the results we use below are usually proved using such an interpretation. But it is
enough for us just to cite the results, and so we will ignore this interpretation. This is
discussed more in~(\ref{subsec:representation-theory}) below.

Write
	\begin{equation}
		\Sch := \bigoplus_\lambda \bN s_\lambda,
	\end{equation}
where $\lambda$ runs over all partitions. The Schur functions are well known to form a $\bZ$-linear
basis for $\Lambda_\bZ$. (See p.\ 41 (3.3) of Macdonald's book \cite{Macdonald:SF}.) Therefore
$\Sch$ is an $\bN$-model for $\Lambda_\bZ$ as a module, by way of the evident inclusion
$\Sch\to\Lambda_\bZ$.

\begin{proposition}
\label{pro:Schur-is-a-model}
$\Sch$ is an $\bN$-model	of $\Lambda_\bZ$ as a composition algebra, in a unique way. 
We also have $\Sch\subseteq\Lambda_\bN$.
\end{proposition}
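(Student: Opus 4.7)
The plan is to verify the six conditions of (\ref{subsec:flat-models}) and (\ref{subsec:models-of-composition-algebras}) for $\Sch$ as a candidate flat submodel of $\Lambda_\bZ$. Once this is done, uniqueness will be automatic: the remark in (\ref{subsec:flat-models}), augmented by condition (6) as in (\ref{subsec:models-of-composition-algebras}), says that being a flat model with composition (when $K\subseteq K'$) is a property of the subset, not extra structure.

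I would begin with the bookkeeping that does not require Schur positivity. Flatness is automatic because $\Sch$ is free as an $\bN$-module, and the bijectivity of $\bZ\tn_\bN\Sch\longisomap\Lambda_\bZ$ is the classical fact that the Schur functions are a $\bZ$-basis of $\Lambda_\bZ$. Condition (5) is vacuous because $L=\bN$. The unit of composition satisfies $e=h_1=s_1\in\Sch$, which supplies half of (6). The co-units act on Schur functions by $\eps^+(s_\lambda)=\delta_{\lambda,0}$ and $\eps^\times(s_\lambda)$ equals $1$ if $\lambda$ has at most one row and $0$ otherwise; both values are in $\bN$.

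What remains is $\bN$-positivity of multiplication, of $\Delta^+$, of $\Delta^\times$, and of $\circ$ on $\Sch$. For the first, I would invoke the Littlewood--Richardson rule $s_\mu s_\nu=\sum c^{\lambda}_{\mu\nu}s_\lambda$ with $c^\lambda_{\mu\nu}\in\bN$; the companion expansion $\Delta^+(s_\lambda)=\sum c^\lambda_{\mu\nu} s_\mu\tn s_\nu$ is dual under the Hall inner product and hence has the same coefficients. For $\Delta^\times$, Proposition \ref{pro:lambda-structure-maps} identifies it with the substitution $\{x_i\}\mapsto\{x_i\tn x_j\}$, and the classical decomposition of the Schur functor $S^\lambda(V\tn W)$ into irreducible $GL(V)\times GL(W)$-modules gives $s_\lambda(xy)=\sum N^\lambda_{\mu\nu}s_\mu(x)s_\nu(y)$ with $N^\lambda_{\mu\nu}\in\bN$. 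For plethysm, $s_\mu\circ s_\lambda$ is the character of $V\mapsto S^\mu(S^\lambda V)$ and is therefore Schur-positive. Finally, the inclusion $\Sch\subseteq\Lambda_\bN$ is the tableau formula $s_\lambda=\sum_T x^T$ summed over semistandard Young tableaux $T$ of shape $\lambda$.

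The main obstacle is not an original computation but the fact that every one of the five positivity facts above is a nontrivial theorem of algebraic combinatorics/representation theory, and the proof of the proposition consists essentially of citing them and assembling them into the framework of (\ref{subsec:models-of-composition-algebras}). In the exposition I would cite Macdonald's book for each input and otherwise let the categorical formalism do the work.
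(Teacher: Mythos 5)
Your proposal is correct and follows essentially the same route as the paper: reduce to conditions (1)--(6) of (\ref{subsec:flat-models}) and (\ref{subsec:models-of-composition-algebras}) using freeness of $\Sch$, then cite the standard positivity theorems (Kostka, Littlewood--Richardson, Kronecker, and plethysm coefficients all lying in $\bN$). The only cosmetic difference is that you compute $\eps^+$ and $\eps^\times$ on Schur functions directly, whereas the paper deduces their containment in $\bN$ from $\Sch\subseteq\Lambda_\bN$.
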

\begin{proof}
	Since $\Sch$ is free, it is enough to check properties (1)--(6)
	of~(\ref{subsec:flat-models}) and~(\ref{subsec:models-of-composition-algebras}).
	These all reduce to standard facts about Schur functions, for which
	we will refer to chapter 1 of Macdonald's book \cite{Macdonald:SF}.

	We have $\Sch\subseteq \Lambda_\bN$ because of the equality
		\begin{equation}
		\label{eq:Kostka}
			s_\lambda = \sum_\mu K_{\lambda\mu}m_\mu
		\end{equation}
	where the $K_{\lambda\mu}$ are the Kostka numbers, which are in $\bN$, by 
	Macdonald, p.\ 101 (6.4).

	The fact that $\Sch$ is a sub-$\bN$-algebra follows from the equations
	$$
	1=s_0, \quad\quad s_{\lambda}s_{\mu} = \sum_\nu c^{\nu}_{\lambda\mu}s_\nu,
	$$
	where the $c^{\nu}_{\lambda\mu}$ are the Littlewood--Richardson coefficients, which
	are in $\bN$, by Macdonald, pp.\ 113--114, (7.5) and (7.3).

	By Macdonald, p.\ 72 (5.9) and p.\ 119 (7.9), we have
		\begin{align}
			\label{eq:LW-coeffs}
			\Delta^+(s_\lambda) &= \sum_{\mu\nu}c^{\lambda}_{\mu\nu}s_\mu \tn s_\nu \\
			\label{eq:Kronecker-coeffs}
			\Delta^\times(s_\lambda) &= \sum_{\mu\nu}\gamma^{\lambda}_{\mu\nu}s_\mu \tn s_\nu,
		\end{align}
	where the $c^{\nu}_{\lambda\mu}$ are again the Littlewood--Richardson coefficients and
	the $\gamma^{\lambda}_{\mu\nu}$ are the Kronecker coefficients, which
	are in $\bN$ by pp.\ 114--115 of Macdonald. It follows that $\Sch$ is closed under the 
	co-addition	and comultiplication maps.
	
	The containments $\eps^+(\Sch),\eps^{\times}(\Sch)\subseteq\bN$ follow from 
	$\Sch\subseteq\Lambda_\bN$.

	Finally, we have
		$$
		s_{\lambda}\circ s_{\mu} = \sum_\nu a^{\nu}_{\lambda\mu}s_\nu,
		$$
	where $a^{\nu}_{\lambda\mu}\in\bN$, by Macdonald, p.\ 136 (8.10). 
	So we have $s_\lambda\circ s_{\mu}\in\Sch$.
	It follows that $\Sch$ is closed under composition, since $\Sch$ is an $\bN$-$\bN$-bialgebra.
	Last, it contains the compositional identity element $e$ because $e=s_1$.
\end{proof}

\subsection{Remark} 

The three models of $\Psi_\bQ$ over $\bN$ we have seen are $\Psi_\bN$, $\Lambda_\bN$, and $\Sch$.
The largest of the three is $\Lambda_\bN$. The other two are incomparable, even over $\bQpl$.
Indeed, we have $s_{11}=(\psi_1^2-\psi_2)/2$ and $\psi_2=s_1^2-2s_{11}$.

\subsection{Remark: $\Sch$ is not free as an $\bN$-algebra}
\label{subsec:Sch-is-not-a-free-algebra}

As in~(\ref{subsec:Lambda-is-not-a-free-algebra}), any generating set of $\Sch$ contains all Schur
functions that cannot be written as monomials with coefficient $1$ in the other Schur functions.
This is true of $s_1=h_1$, $s_2=h_2$, and $s_{11}=h_1^2-h_2$ because they are irreducible in
$\Lambda_\bZ=\bZ[h_1,h_2,\dots]$. But they are not algebraically independent: $s_1^2=s_2+s_{11}$.

\subsection{Remark: explicit description of a $\Sch$-action}
\label{subsec:explicit-Sch-action}
This is the same as in~(\ref{subsec:explicit-lambda_N-actions}), but with operators $s_\lambda$.
\index{semiring!$\Sch$-semiring}\index{action!$\Sch$-action}
Now the identities are the following:
	\begin{align*}
	s_0(x) &= 1 \\
	(s_\lambda s_\mu)(x) &= s_\lambda(x) s_\mu(x) \\
	s_\lambda(0) &= \eps^+(s_\lambda) \\
	s_\lambda(x+y) &= \Delta^+(s_\lambda)(x,y) 
		:= \sum_{\mu,\nu} c^\lambda_{\mu\nu} s_\mu(x)s_\nu(y)\\
	s_\lambda(1) &= \eps^\times(s_\lambda) \\
	s_\lambda(xy) &= \Delta^\times(s_\lambda)(x,y)
	 	:= \sum_{\mu,\nu} \gamma^\lambda_{\mu\nu} s_\mu(x)s_\nu(y) \\
	s_1(x) &= x \\
	s_\lambda(s_\mu(x)) &= (s_\lambda\circ s_\mu)(x) 
		:= \sum_{\nu} a^\nu_{\lambda\mu} s_\mu(x)s_\nu(y),
	\end{align*}
where $c^{\lambda}_{\mu\nu}$,  $\gamma^\lambda_{\mu\nu}$, and $a^\lambda_{\mu\nu}$ are the 
Littlewood--Richardson, Kronecker, and Schur plethysm coefficients, as in the proof 
of~(\ref{pro:Schur-is-a-model}).

In these terms, a $\Lambda_\bN$-action gives rise to a $\Sch$-action by the 
formula~(\ref{eq:Kostka}).

\subsection{Example} 

Since we have $\Sch\subseteq\Lambda_\bN$, any $\Lambda_\bN$-semiring 
is a $\Sch$-semiring. For example, in a monoid $\bN$-algebra $\bN[M]$ with the toric 
$\Lambda_\bN$-structure, we have
	\begin{equation}
	\label{eq:schur-of-line-element}
	s_\lambda([m]) = \left\{ 
		\begin{array}{ll} 
			[m]^l & \text{if }\lambda=(l) \\ 0 & \text{otherwise} 
		\end{array} 
		\right.
	\end{equation}

\subsection{Example} 
\label{subsec:anti-teich-schur-semiring}
Consider the monoid $\bZ$-algebra $\bZ[x]/(x^2-1)$ with the $\Lambda_\bZ$-structure
defined by $\psi_n(x)=x^n$, and let $A$ denote the sub-$\bN$-algebra generated
by $\eta=-x$. So we have a presentation 
	$$
	A=\bN[\eta]/(\eta^2=1).
	$$
Then $A$ is a sub-$\Sch$-semiring. Indeed, for any partition $\lambda$ of $r$, we have
	$$
	s_\lambda(-x) = (-1)^r s_{\lambda'}(x) =  \left\{ 
		\begin{array}{ll} (-x)^r & \text{if }\lambda=(1^r) \\ 0 & \text{otherwise} \end{array} 
		\right.
	$$
by (\ref{eq:schur-of-line-element}), where $\lambda'$ is the partition conjugate to $\lambda$.

\subsection{Frobenius lifts and $p$-derivations}
For each integer $n\geq 1$, we have $\psi_n = m_n \in \Lambda_\bN.$ Therefore every
$\Lambda_\bN$-semiring $A$ has a natural endomorphism $\psi_n$. It is called the {\em Adams}\index{Adams operator} or {\em Frobenius}\index{Frobenius!operator}
{\em operator} . When $n$ is a prime $p$, the induced map on $\bF_p\tn_\bN A$ is the Frobenius map
$x\mapsto x^p$. To see this, one can reduce to the case of rings, or one can simply observe that we
have
	$$
	d_p := \frac{1}{p}(\psi_1^p - \psi_p) \in \Lambda_\bN
	$$
and hence $\psi_p(x)+pd_p(x)=x^p$.
The operator $-d_p=\theta_p$ is a $p$-derivation in the language of Buium, p.\ 
31~\cite{Buium:Arithmetic-diff-equ}. 

It is different for $\Sch$-semirings. Since $\psi_n\not\in\Sch$ for $n\geq 2$, we do not
generally have Frobenius lifts. However we do have the operators $d_n:=-\theta_n$ for $n\geq 2$.
Indeed Scharf--Thibon~\cite{Scharf-Thibon} and Doran~\cite{Doran:Reutenauer} proved a conjecture of
Reutenauer~\cite{Reutenauer:Witt} that
	$$
	-\theta_n\in\Sch \quad \text{for}\quad n\geq 2.
	$$

Of course, for $n=1$ we have the opposite: $+\theta_1\in\Sch$. This irksome exception suggests the
perspective here might not be the most fundamental one. For example, it is a standard result in the
theory of $p$-typical Witt vectors (tantamount to the Cartier--Dieudonn\'e--Dwork lemma) that the
ring $\bZ[d_{p^n}| n\geq 0]$ of $p$-typical symmetric functions agrees with $\bZ[d_p^{\circ
n}\mid n\geq 0]$, the composition $\bZ$-algebra generated by the iterated $p$-derivations. The
families of generators $d_{p^n}$ and $d_p^{\circ n}$ agree for $n\leq 1$ but not for $n=2$.

\begin{question}
Do the symmetric functions $d_{p^n}-d_p^{\circ n}$ lie in $\Sch$? More generally, do
$d_{p^{m+n}}-d_{p^m}\circ d_{p^n}$? If not, do they lie in $\Lambda_\bN$?
\end{question}

If the answers are yes, it would suggest that the operators $d_p^{\circ n}$ are more fundamental
from the point of view of positivity than the $d_{p^n}$. In other words, up to some signs, the
iterated $p$-derivations generate a larger sub-$\bN$-algebra than the Witt symmetric functions.
Computations have shown that the answer to the stronger question above is yes when $p=2$, $m+n\leq
3$; $p=3$, $m+n=2$; and $p=5$, $m+n=2$.

The obvious generalization $d_{rs}-d_r\circ d_s\in\Sch$ is not always true. Computations
have shown it is true for $(r,s)=(2,3),(2,5),(3,5),(5,3)$ but false for $(3,2)$ and $(5,2)$.

\subsection{Remark: the necessity of nonlinear operators}

Observe that neither $\Lambda_\bN$ nor $\Sch$ is generated by linear operators, even after base
change to $\bRpl$. Indeed, because we have $m_{11}=e_2=s_{11}$, any generating set of
$\Lambda_\bRpl$ or $\bRpl\tn_\bN\Sch$ would have to contain a nonzero multiple of $e_2$, which is
nonadditive element. So an action of $\Lambda_\bZ$ or $\Sch$ on an $\bN$-algebra cannot be expressed
entirely in terms of additive operators.

It appears to be the case that over $\bZ$ or $\bZ_p$, any composition algebra that cannot be
generated by linear operators can trace its origin to the $d_p$ operators---in other words, to
lifting Frobenius maps to characteristic $0$. There are probably theorems to this effect. (For
example, Buium~\cite{Buium:arithmetic-analogues} classifies ring scheme structures on the plane
$\bA^2_{\bZ_p}$, which is perhaps the first test case, and he is able to prove such a result
there.) Could it be that the existence of nonlinear composition algebras over $\bRpl$ is due to a
similarly identifiable phenomenon? It seems optimistic to hope that the answer is yes, but if it
were, the importance would be so great that the question should not be dismissed.

\subsection{Remark: composition algebras over number fields}
\label{subsec:lambda-over-number-fields}

The composition ring $\Lambda_\bZ$ has analogues over rings of integers in general number fields.
We will not really use them in this chapter, but they will appear in a several remarks and open
questions.

Let $K$ be a number field, let $\sO_K$ denote its ring of integers, and let $E$ denote a family of
maximal ideals of $\sO_K$. Let $\Lambda_{\sO_K,E}$ denote the composition $\sO_K$-algebra
characterized by the property that an action on any flat $\sO_K$-algebra $A$ is the same as a
commuting family of $\sO_K$-algebra endomorphisms $(\psi_\gp)_{\gp\in E}$ such that each $\psi_\gp$
reduces to the Frobenius map $x\mapsto x^{[\sO_K:\gp]}$ on $A/\gp A$. This construction and the
associated Witt vector functor $W_{\sO_K,E}$ are discussed in much more detail in section 1 of my
paper~\cite{Borger:BGWV-I}. When $K=\bQ$, it reduces to a special case of the construction
of~(\ref{subsec:truncation-sets}), as is explained there, but otherwise there is no overlap between
the two.

\subsection{Remark: representation theory and K-theory}
\label{subsec:representation-theory}

The standard way of looking at $\Sch$ is from the point of view of polynomial functors, or equivalently
representations of $\GL_n$. (See Macdonald~\cite{Macdonald:SF}, ch.\ 1, app.\ A.) Let $\mathcal{F}$ denote the
category of polynomial functors from the category of finite-dimensional vector spaces over, say, $\bC$ to
itself. Then for any polynomial functor $F$, the polynomial $\tr(F(\diag(x_1,\dots,x_n)))$ in
$\bZ[x_1,\dots,x_n]$ is symmetric, and as $n$ varies, these polynomials define a compatible sequence and hence
an element of the inverse limit $\Lambda_\bZ$. This defines a group homomorphism $\chi\:K(\mathcal{F})\to
\Lambda_\bZ$, where $K(\mathcal{F})$ denotes the Grothendieck group of $\mathcal{F}$, and one proves this map
is bijection.

Under this bijection the irreducible polynomial functors correspond to the Schur functions. Indeed, given a
partition $\lambda$ of $n$, let $V_\lambda$ be the corresponding irreducible $\bQ$-linear representation of
$\mathbf{S}_n$. Then the functor $S_\lambda(E):=\Hom_{\bQ[\mathbf{S}_n]}(V_\lambda,E^{\tn n})$ is a polynomial
functor. Further, the $S_\lambda$ are precisely the irreducible polynomial functors, and each $S_\lambda$
corresponds to the Schur function $s_\lambda$ under $\chi$. Therefore $\chi$ induces an isomorphism
$K_+(\mathcal{F})\to\Sch$ of $\bN$-modules, where $K_+(\vbl)$ denotes the subset of the Grothendieck
group consisting of effective classes, rather than just virtual ones. Then each of the structure
maps in the composition-algebra structure on $\Sch$ corresponds to something transparent on $\mathcal{F}$: the
operations $+$, $\times$, and plethysm on $\Sch$ correspond to $\oplus$, $\otimes$, and composition on
$\mathcal{F}$. The co-operation $\Delta^+$ corresponds to the rule that sends a polynomial functor $F$ to
the polynomial functor $F(E\oplus E')$ in two variables $E$ and $E'$; similarly $\Delta^+$ corresponds to
$F(E\otimes E')$. The positivity properties of $\Sch$ used in the proof of (\ref{pro:Schur-is-a-model}) below
then hold because the (co-)operations on $\Sch$ correspond to (co-)operations on $\mathcal{F}$, and hence they
preserve effectivity.

This point of view is also a good way of approaching the $\lambda$-ring structure on Grothendieck groups. Given
an amenable linear tensor category $\CC$, we can define functors $S_\lambda\:\CC\to\CC$ by
$S_\lambda(E)=\Hom_{\bQ[\mathbf{S}_n]}(V_\lambda,E^{\tn n})$. This defines an action of the monoid
$K_+(\mathcal{F})=\Sch$ under composition on $K_+(\CC)$, and this is essentially by definition an action of the
composition $\bN$-algebra $\Sch$ on the $\bN$-algebra $K_+(\CC)$. This, in turn, induces an action of
$\Lambda_\bZ$ on $K(\CC)$, which is the usual $\lambda$-ring structure on Grothendieck groups.

While this is a more conceptual way of thinking about positivity properties on $\Sch$, the deeper meaning of
the connection with Witt vectors still eludes me. From an arithmetic point of view, it is not clear why one
would should consider the Witt vector functor associated to the composition algebra made by assembling all
this representation-theoretic data into the algebraic gadget $\Lambda_\bZ$. Indeed, the composition algebras
$\Lambda_{\sO_K,E}$ of~(\ref{subsec:lambda-over-number-fields}) and the corresponding Witt functors
$W_{\sO_K,E}$ also have arithmetic interest (for instance in the theory of complex multiplication when $K$ is
an imaginary quadratic field), but no representation theoretic interpretation of $\Lambda_{\sO_K,E}$ is known.
In fact, no interpretation in terms of something similar to symmetric functions is known.

At the time of this writing, there is still a tension between the following three observations: (1)
the connection between Frobenius lifts and the representation theory of $\GL_n$ appears to be a
coincidence, merely an instance of a fundamental algebraic object arising in two unrelated
contexts; (2) to define $\Sch$ and $\Wsch$ one needs the positivity results established with
representation theory; (3) $\Sch$ and $\Wsch$ have some arithmetic interest, as in
section~\ref{sec:total-positivity} below. The situation with $\Lambda_\bN$ and $W$ is perhaps less
mysterious---they seem more important from the arithmetic point of view and also require no
nontrivial positivity results from representation theory. Perhaps the resolution will be that
$\Sch$ and $\Wsch$ are of arithmetic interest only through their relation to $\Lambda_\bN$ and $W$.

Whatever the case, this is why I have ignored K-theory and used the representation theory as a
black box. However this point of view does suggest the following question:

\begin{question}
	\label{ques:p-poly-func}
Let $B_p$ denote the subset of $\Lambda_\bZ$ consisting of the characters of polynomial functors over an 
infinite field of characteristic $p>0$. Is $B_p$ a model for $\Lambda_\bZ$ as a composition algebra?
\end{question}

The structure of $B_p$ is apparently much subtler than that of its analogue $\Sch$ in characteristic $0$.
Nevertheless I expect the answer to this question to be \emph{yes} for formal reasons, as with $\Sch$ above.
It would be interesting to make a more detailed study of the $B_p$ from the point of view of plethystic algebra.

\section{Witt vectors of $\bN$-algebras}

For the classical theory of Witt vectors for rings, one can see Bergman's lecture 26 in Mumford's
book~\cite{Mumford:Lectures-on-curves}, or chapter III of Hazewinkel's book~\cite{Hazewinkel:book},
or \S1 of chapter IX of Bourbaki~\cite{Bourbaki:CommAlg} and especially the exercises there. One
can also see Witt's original writings,~\cite{Witt:Vectors} and pp.\ 157--163 of
\cite{Witt:CollectedPapers}.

\subsection{$W$ and $\Wsch$}
For any $\bN$-algebra $A$, define the {\em $\bN$-algebra of Witt vectors}\index{$\bN$-algebra!of Witt vectors} with entries in $A$ by
\index{Witt!vector}\index{Schur--Witt vector}
	$$\index{$W(A)$}
	W(A) := \Alg{\bN}(\Lambda_\bN,A),
	$$
and define the 
{\em $\bN$-algebra of Schur--Witt vectors} with entries in $A$ by
	$$
	\Wsch(A) := \Alg{\bN}(\Sch,A).
	$$
The $\bN$-algebra structures are inherited from the $\bN$-$\bN$-bialgebra structure on $\Lambda_\bN$ and 
$\Sch$, as explained in section~\ref{sec:plethystic-algebra}.
Since both $\Lambda_\bN$ and $\Sch$ are models of $\Lambda_\bZ$ over $\bN$, both Witt functors agree up to canonical isomorphism with the usual big Witt vector functor on rings. 

Since $\Lambda_\bN$ is a composition algebra, $W(A)$ has a natural $\Lambda_\bN$-action. 
It is defined, for $a\in W(A)$ and $f\in\Lambda_\bN$, by
	\begin{equation}
		f(a)\:g\mapsto a(g\circ f)
	\end{equation}
for all $g\in\Lambda_\bN$. Equivalently, the comonad structure map $W(A)\to W(W(A))$ is given by
$a\mapsto[g\mapsto a(g\circ f)]$. The analogous statements hold for $\Wsch$ and $\Sch$.

Finally, the two functors $W$ and $\Wsch$ are related: the inclusion 
$\Sch\subseteq\Lambda_\bN$ of composition algebras induces a map 
	$$
	W(A)\longmap \Wsch(A)
	$$
of $\Sch$-semirings which is natural in $A$.

\subsection{The ghost map and similar ones}
Consider the diagram
	$$
	\xymatrix{
	\Sch \ar@{>->}[r] & \Lambda_\bN \\
	\bN[h_1,h_2,\dots] \ar@{>->}[u]\ar@{>-->}[r] & \Psi_\bN \ar@{>->}[u]
	}
	$$
of sub-$\bN$-algebras of $\Lambda_\bZ$, where the bottom map is dashed to indicate that
it is defined only after base change to $\bQpl$.
If we apply the functor $\Alg{\bN}(\vbl,A)$ to this diagram, we get
the following diagram of Witt vectors:
	\begin{equation} 
		\label{diag:ghost-and-series-maps}
		\begin{split}
	\xymatrix{
	\Wsch(A) \ar_-{\ser}[d] & W(A) \ar_-{\schurmap}[l]\ar^-{w}[d] \\
	1+tA[[t]] &  A^\infty \ar@{-->}_-{\sergh}[l],
	}
	\end{split}
	\end{equation} 
where in the bottom row we have made use of the following identifications
	\begin{align*}
		\Alg{\bN}(\bN[h_1,\dots],A)&\longisomap 1+tA[[t]], \quad 
			a\mapsto \sum_i a(h_i)t^i \\
		\Alg{\bN}(\Psi_\bN,A) &\longisomap A^\infty, \quad 
			a\mapsto \langle{a(\psi_1),a(\psi_2),\dots\rangle}.
	\end{align*}
The dashed arrow indicates that $A$ needs to be a $\bQpl$-algebra for $\sergh$ to be defined.
Recall from~(\ref{subsec:non-models}) that $1+tA[[t]]$ is in general only a commutative 
monoid, under usual power-series multiplication; but the other three are $\bN$-algebras. 
\begin{proposition}
\begin{enumerate}
	\item $\schurmap$ is a morphism of $\Sch$-semirings, and $w$ is a morphism of
	 	$\Psi_\bN$-semirings, and 
		$\ser$ and $\sergh$ are morphisms of
		$\bN$-modules: $\sigma_*(x+y)=\sigma_*(x)\sigma_*(y)$, $\sigma_*(0)=1$.
	\item If $A$ is a $\bZ$-algebra, then $\schurmap$ and $\ser$ are bijections.
		If $A$ is contained in a $\bZ$-algebra, then $\schurmap$ and $\ser$ are injections.
	\item If $A$ is a $\bQ$-algebra, then $w$ and $\sergh$ are bijections.
		If $A$ is contained in a $\bQ$-algebra, then $w$ and $\sergh$ are injections.
\end{enumerate}
\end{proposition}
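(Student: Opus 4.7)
The whole proposition follows formally from applying the contravariant functor $\Alg{\bN}(\vbl, A)$ to the square of sub-$\bN$-algebras of $\Lambda_\bZ$ displayed just before~(\ref{diag:ghost-and-series-maps}). The strategy is to identify, for each inclusion, which kind of co-object structure it preserves, and then to use base change to $\bZ$ or $\bQ$ to deduce the bijectivity and injectivity claims.

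For (1): the inclusion $\Sch\hookrightarrow\Lambda_\bN$ is a morphism of composition $\bN$-algebras by~(\ref{pro:Schur-is-a-model}), so $\schurmap$ is equivariant for the $\Sch$-action on $\Wsch(A)$ and the $\Sch$-action on $W(A)$ obtained by restricting the $\Lambda_\bN$-action. Similarly $\Psi_\bN\hookrightarrow\Lambda_\bN$ is a morphism of composition algebras by construction (each $\psi_n$ is algebra-like with $\psi_m\circ\psi_n=\psi_{mn}$ by~(\ref{pro:lambda-structure-maps})), giving $\Psi_\bN$-equivariance of $w$. For $\ser$ and $\sergh$: by~(\ref{subsec:non-models}), $\bN[h_1,h_2,\ldots]$ is a model of $\Lambda_\bZ$ only as a co-$\bN$-module object, not as a co-$\bN$-algebra object. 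The identities $\Delta^+(h_n)=\sum_{i+j=n}h_i\tn h_j$ and $\eps^+(h_n)=0$ for $n\geq 1$ translate, under $a\mapsto\sum_i a(h_i)t^i$, precisely to the rules $\ser(x+y)=\ser(x)\ser(y)$ and $\ser(0)=1$. The argument for $\sergh$ is identical, working over $\bQpl$ via $\bN[h_1,\ldots]\hookrightarrow\Psi_\bN$.

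For (2) and (3): if $A$ is a $\bZ$-algebra, the adjunction $\Alg{\bN}(P, A) = \Alg{\bZ}(\bZ\tn_\bN P, A)$ combined with~(\ref{cor:models-of-lambda-Q}) identifies both $W(A)$ and $\Wsch(A)$ with $\Alg{\bZ}(\Lambda_\bZ, A)$ in a way that makes $\schurmap$ the identity; likewise $\bZ\tn_\bN\bN[h_1,\ldots]=\bZ[h_1,\ldots]=\Lambda_\bZ$ makes $\ser$ bijective. For (3), tensoring over $\bQ$ and invoking the isomorphism~(\ref{map:coghost}) makes both $\bQ\tn_\bN\Psi_\bN\to\bQ\tn_\bN\Lambda_\bN$ and $\bQ[h_1,\ldots]\to\Psi_\bQ$ into isomorphisms (each identified with $\Lambda_\bQ$), yielding the bijectivity of $w$ and $\sergh$. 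For the injectivity statements, an embedding $A\hookrightarrow A'$ into a $\bZ$- (resp.\ $\bQ$-) algebra induces injections at every corner of the relevant naturality square, since $\Alg{\bN}(P,\vbl)$ preserves monomorphisms; the bottom edge is bijective by what we have just shown, so the top edge must be injective as well.

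The main obstacle is purely bookkeeping: one must verify that the identifications $\Alg{\bN}(\bN[h_1,\ldots], A)\cong 1+tA[[t]]$ and $\Alg{\bN}(\Psi_\bN, A)\cong A^\infty$ transport the co-operations to precisely the multiplicative monoid structure on $1+tA[[t]]$ and the product $\bN$-algebra structure on $A^\infty$, and that~(\ref{diag:ghost-and-series-maps}) genuinely commutes under these descriptions. Once this translation is in place---and it is essentially forced by~(\ref{pro:lambda-structure-maps}) and the explicit formulas for $\Delta^+$ and $\Delta^\times$ on the $h_i$ and $\psi_i$---the rest of the proof is a mechanical application of the adjunction above and the naturality of $\Hom$.
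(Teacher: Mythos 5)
Your proposal is correct and follows essentially the same route as the paper: part (1) from the fact that $\Sch\to\Lambda_\bN$ and $\Psi_\bN\to\Lambda_\bN$ are morphisms of composition algebras together with the formulas for $\Delta^+(h_n)$ and $\eps^+(h_n)$, and parts (2)--(3) from the fact that all the representing objects are models of $\Lambda_\bZ$ (resp.\ $\Psi_\bQ$) over $\bN$. The paper's proof is just a terser statement of the same argument; your extra detail on the naturality square for the injectivity claims is a correct filling-in of what the paper leaves implicit.
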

\begin{proof}
	(1): The first statement holds because the inclusions $\Sch\to\Lambda_\bN$ and 
		$\Psi_\bN\to\Lambda_\bN$ are morphisms of composition algebras.
		The second follows from the equations $\Delta^+(h_n)=\sum_{i+j=n}h_i\tn h_j$ and
		$\eps^+(h_n)=0$ for $n\geq 1$.

	(2): This holds because $\Lambda_\bN$, $\Sch$, and $\bN[h_1,\dots]$
		are models for $\Lambda_\bZ$ over $\bN$.

	(3): Similarly, this holds because $\Lambda_\bN$, $\Psi_\bN$, and $\bN[h_1,\dots]$ 
		are models for $\Psi_\bQ$ over $\bN$.
\end{proof}

\subsection{Remark} 

The map $\schurmap\:W(A)\to\Wsch(A)$ can fail to be injective when $A$ is not contained in a ring.
For example, it is not injective when $A$ is the Boolean semiring $\bN/(1+1=1)$. See the
forthcoming work with Darij Grinberg~\cite{Borger-Grinberg:W(Bool)}. 

\subsection{Coordinates for Witt vectors of rings}
\label{subsec:W-coords-for-rings}
When $A$ is a ring, the bijection $\ser$ of (\ref{diag:ghost-and-series-maps}) allows us to 
identify Witt vectors and power series. In
other words, the complete symmetric functions give a free set of coordinates on the
$\bZ$-scheme $W=\Spec(\Lambda_\bZ)$.

Note that there are three other common conventions for identifying Witt vectors and power series. 
Given any two signs $\eps_1,\eps_2\in\{+1,-1\}$, the map
	\begin{align}
	\label{map:series-normalizations}
	\begin{split}
	\ser_{\eps_1}^{\eps_2}\:W(A) &\longisomap 1+tA[[t]] \\
	a &\mapsto \big(\sum_i a(e_i)(\eps_1t)^i\big)^{\eps_2}
	\end{split}
	\end{align}
is an isomorphism of $\bN$-modules. We have taken $\ser=\ser^-_-$ as our convention. 
It has one advantage over the others, which is that $\sergh$ 
can be defined over $\bQpl$ rather than just $\bQ$.
Equivalently, the complete symmetric functions can
be written as polynomials in the power sums with nonnegative coefficients. This is not
true with the elementary symmetric functions, even if we allow ourselves to change the signs
of half of them. I am not aware of any other convincing reason to prefer one of these sign conventions to the 
others.

The presentation $\Lambda_\bZ=\bZ[\theta_1,\dots]$ gives another set of full coordinates:
	$$
	W(A) \longisomap A^\infty, \quad a \mapsto (a(\theta_1), a(\theta_2),\dots)
	$$
We will call the elements $a(\theta_1),\dots\in A$ the 
\emph{Witt components}\index{Witt!components, coordinates} or \emph{Witt coordinates}
of $a$. Addition does not have a simple closed form in these coordinates, unlike in
the series coordinates above. But the Witt coordinates do have two advantages over the series
coordinates. First, they are related
to the power sums by a simple, closed-form expression~(\ref{eq:Frobenius-Witt-relation}).
Second, they behave well with respect to localization. More precisely, let $E$ be a set
of prime numbers. Then we have a free presentation
	$$
	\Lambda_{\bZ[1/E]} = \bZ[1/E][\dots,\theta_{m}\circ\psi_n,\dots],
	$$
where $m$ runs over the positive integers whose prime divisors do not lie in $E$, and
$n$ runs over those whose prime divisors all do. (See the isomorphisms
(1.20.1) and (5.3.1) in~\cite{Borger:BGWV-I}, for example.)

The components of the image of $a$ under the map 
	$$
	w\:W(A)\to A^\infty, \quad a\mapsto \langle{a(\psi_1),a(\psi_2),\dots}\rangle
	$$
are its \emph{ghost components}\index{ghost components}. Their advantage is that all algebraic structure is completely
transparent---addition and multiplication are performed componentwise and Frobenius operations are
given by scaling the indices. When $A$ is a $\bQ$-algebra, $w$ is a bijection, and so the ghost
components have no deficiencies over $\bQ$. When $A$ is a torsion-free ring (i.e.\ a flat
$\bZ$-algebra), $w$ is an injection, and the ghost coordinates are still useful there. We write the
ghost components with angle brackets to avoid any confusion with the Witt components.

\subsection{Witt vectors for semirings contained in rings}
A Witt vector is determined by its series coordinates when $A$ is contained in a  
ring. Indeed, a map $\Lambda_\bN\to A$ is equivalent to a map $a\:\Lambda_\bZ\to\bZ\tn_\bN A$
subject to the \emph{effectivity condition}\index{effectivity condition} $a(m_\lambda)\in A$ for all partitions $\lambda$.
Therefore we have
	\begin{equation}
	\label{eq:effectivity-conditions-for-W(A)}
	W(A) = \bigsetof{\sum_i a_i t^i \in 1+tA[[t]]}{\sum_\mu M^\mu_\lambda a_{\mu_1}a_{\mu_2}\cdots 
		\in A},
	\end{equation}
where $M$ is the transition matrix from the monomial basis $(m_\lambda)_\lambda$ to the 
basis of monomials in the complete symmetric functions $h_n$:
	$$
	m_\lambda = \sum_\mu M^\mu_\lambda h_{\mu_1}h_{\mu_2}\cdots.
	$$
Formula~(\ref{eq:effectivity-conditions-for-W(A)}) also holds for $\Wsch(A)$, but then $M$ must be the transition matrix 
from the Schur basis to this $h$ basis.

Similar statements hold for the ghost components when $A$ is contained in a
$\bQ$-algebra. For example, we have
	\begin{equation}
	\Wsch(A) = \bigsetof{\langle{a_1,a_2,\dots}\rangle\in A^\infty}{\sum_\mu N^\mu_\lambda 
		a_{\mu_1}a_{\mu_2}\cdots \in A},
	\end{equation}
where $N$ is the transition matrix from the Schur basis to the basis of monomials
in the power sums:
	$$
	s_\lambda = \sum_\mu N^\mu_\lambda \psi_{\mu_1}\psi_{\mu_2}\cdots.
	$$
In this particular case, the matrix $N$ has a well-known description: 
the block where $|\lambda|=|\mu|=n$ is the inverse of the
character table of the symmetric group $\mathbf{S}_n$. See Macdonald, (7.8) p.~114~\cite{Macdonald:SF}.

\subsection{Example: some explicit effectivity conditions}
Let us write out the effectivity conditions on ghost components for $\Wsch(A)$ and
$W(A)$ up to weight $4$. If a ghost vector $\langle{a_1,\dots,a_4}\rangle\in A^4$ lies in
the image of $\Wsch(A)$, then the following eleven elements of
$\bQ\tn_\bN A$ are contained in $A$:
	$$
	a_1 , \quad
	(a_1^2\pm a_2)/2 , \quad
	(a_1^3 \pm 3a_1a_2 + 2a_3)/6  , \quad
	(a_1^3-a_3)/3, 
	$$
	$$
	(a_1^4 \pm 6a_1^2a_2 + 3a_2^2 + 8a_1a_3 \pm 6a_4)/24  ,
	$$
	$$
	(3a_1^4 \pm 6a_1^2a_2-3a_2^2\mp 6a_4)/24  , \quad
	(2a_1^4+6a_2^2-8a_1a_3)/24.
	$$
These are all the conditions corresponding to $s_\lambda$ with $|\lambda|\leq 4$.

In the case of $W(A)$, the following elements are required to be in $A$:
$$
	a_1, \quad
	a_2, \quad
	a_3, \quad
	a_4, \quad
$$
$$	
	(a_1^2-a_2)/2,  \quad
	a_1a_2 - a_3, 
$$
$$
	(a_1^3 - 3a_1a_2 + 2a_3)/6 ,  \quad
	a_1a_3 - a_4,  \quad
	(a_2^2 - a_4)/2, \quad
$$
$$
	(a_1^2a_2 - a_2^2 - 2a_1a_3 + 2a_4)/2, \quad
	(a_1^4 - 6a_1^2a_2 + 3a_2^2 +8a_1a_3 - 6a_4)/24
$$
These are all the conditions corresponding to $m_\lambda$ with $|\lambda|\leq 4$.

\subsection{Coordinates for Witt vectors of semirings}
For general semirings, there is no purely vector-like description of $W(A)$ or $\Wsch(A)$. Indeed,
for any set $T$, the functor $A\mapsto A^T$ is represented by the free $\bN$-algebra on $T$, but
by~(\ref{subsec:Lambda-is-not-a-free-algebra}) and~(\ref{subsec:Sch-is-not-a-free-algebra}), neither
$\Lambda_\bN$ nor $\Sch$ is free as an $\bN$-algebra.

Instead Witt vectors are cut out of infinite-dimensional affine space by quadratic relations
determined by the structure constants of multiplication in the relevant basis.
Let $\prt$ denote the set of partitions. For any $\lambda,\mu\in P$, write
	$$
	m_\lambda m_\mu = \sum_{\nu\in P} b^\nu_{\lambda\mu} m_\nu, \quad\quad
	s_\lambda s_\mu = \sum_{\nu\in P} c^\nu_{\lambda\mu} s_\nu.
	$$
Then we have
	\begin{align*}
		W(A) &= \setof{a\in A^{\prt}}{a_0=1,\ a_\lambda a_\mu = \sum_\nu b^\nu_{\lambda\mu}a_\nu} \\
		\Wsch(A) &= \setof{a\in A^{\prt}}{a_0=1,\ a_\lambda a_\mu = \sum_\nu c^\nu_{\lambda\mu}a_\nu}.
	\end{align*}
Addition and multiplication are then defined using the structure constants for
the coproducts $\Delta^+$ and $\Delta^\times$ with respect to the basis in question.

\subsection{Topology and pro-structure} 
\label{subsec:W-topology} 
It is often better to view $W(A)$ and $\Wsch(A)$ as pro-sets, or pro-discrete topological spaces, 
as when $A$ is a ring. We do this as follows. Let $(P_i)_{i\in I}$
denote the filtered system of finitely generated sub-$\bN$-algebras of $\Lambda_\bN$. Then we have
\begin{equation} 
	\label{eq:W-pro-structure} 
	W(A)=\Hom(\colim_{i\in I}P_i,A)=\lim_{i\in I}\Hom(P_i,I). 
\end{equation} 
So $W(A)$, and similarly $\Wsch(A)$, has the natural structure of a pro-set. 
When $A$ is a ring, $W(A)$ can be expressed as an inverse limit of rings. 
I do not know, however, whether the analogous statement holds over $\bN$:

\begin{question}
Are $W(A)$ and $\Wsch(A)$ pro-objects in the category of $\bN$-algebras? More naturally, is this
true as representable functors? Equivalently, can $\Lambda_\bN$ and $\Sch$ be expressed as filtered
colimits $\colim_i P_i$, where each $P_i$ is a co-$\bN$-algebra object in $\Alg{\bN}$ which is
finitely generated as an $\bN$-algebra?
\end{question}

For a stronger form of this question, see~(\ref{subsec:truncation-sets}).

\subsection{Teichm\"uller and anti-Teichm\"uller elements}
\label{subsec:Teich}

For any $\bN$-algebra $A$, consider the monoid algebra $\bN[A]$ with the toric 
$\Lambda_\bN$-structure. By the adjunction property of $W$, the $\bN$-algebra map $\bN[A]\to A$ 
defined by $[a]\mapsto a$ lifts to a unique $\Lambda_\bN$-equivariant map
	$$
	\bN[A]\longmap W(A).
	$$
For $a\in A$, the image of $[a]$ in $W(A)$ is called the \emph{Teichm\"uller lift}\index{Teichm\"uller lift} of $a$ and is
also denoted $[a]$. Explicitly, $[a]$ is the $\bN$-algebra map $\Lambda_\bN\to A$ determined by
	\begin{equation}
	[a]:m_\lambda \longmap 
		\begin{cases}
			a^r & \text{if $\lambda=(r)$} \\
			0 & \text{otherwise}
		\end{cases}
	\end{equation}
The Teichm\"uller map $a\mapsto [a]$ is a map of monoids $A\to W(A)$ under
multiplication.

For the anti-Teichm\"uller elements, consider the $\bN$-algebra 
	$$
	\bN[\eta]/(\eta^2=1) \tn_\bN \bN[A];
	$$ 
this has an action of $\Sch$. On the first factor, $\Sch$ acts as
in~(\ref{subsec:anti-teich-schur-semiring}). On the second factor, it acts through the toric action
of $\Lambda_\bN$. Then the $\bN$-algebra map
	$$
	\bN[\eta]/(\eta^2=1) \tn_\bN \bN[A] \longmap A
	$$
determined by $\eta\mapsto 1$ and $[a]\mapsto a$ for all $a\in A$ lifts by adjunction to a unique
$\Sch$-equivariant map
	$$
	\bN[\eta]/(\eta^2=1) \tn_\bN \bN[A] \longmap \Wsch(A).
	$$
For any $a\in A$, define the \emph{anti-Teichm\"uller lift}\index{anti-Teichm\"uller lift} $\anti{a}\in\Wsch(A)$ 
to be the image of $\eta\tn a$. So in $W(\bZ\tn_\bN A)$ we have
	\begin{equation}
	\anti{a}=-[-a].
	\end{equation}
We also have
	\begin{equation}
	\anti{a}:s_\lambda \longmap 
		\begin{cases}
			a^r & \text{if $\lambda=(1^r)$} \\
			0 & \text{otherwise}
		\end{cases}
	\end{equation}
and
	\begin{equation}
	[a]\anti{b}=\anti{ab}, \quad\quad \anti{a}\anti{b}=[ab], \quad\quad \anti{a}=\anti{1}[a].
	\end{equation}
Observe that the anti-Teichm\"uller lifts exist only in $\Wsch(A)$ and not generally in $W(A)$. For 
example, the element $\anti{1}\in\Wsch(\bN)$ is not in the sub-$\bN$-algebra $W(\bN)$. Indeed,
its ghost vector $\langle{1,-1,1,-1,\dots}\rangle$ is not even in $\bN^\infty$.

\subsection{The involution and the forgotten symmetric functions}
Let
	$$
	\omega\:\Lambda_\bZ\longmap \Lambda_\bZ
	$$
denote the ring map determined by $\omega(h_n)=e_n$ for all $n$. Then we have
	$$
	\omega(s_\lambda)=s_{\lambda'}
	$$
for all $\lambda$, where $\lambda'$ denotes the conjugate partition. 
(See Macdonald, p.\ 23 (2.9)$'$~\cite{Macdonald:SF}.) Therefore we have
	$$
	\omega(\Sch) = \Sch,
	$$
and so $\omega$ induces a functorial set map
	$$
	\Wsch(A) \longmap \Wsch(A), \quad a \mapsto a\circ \omega.
	$$
In fact, this map is simply multiplication by $\anti{1}$. Indeed, it follows from the identities 
of~(\ref{eq:generating-function-identity}) that
	$$
	\omega(\psi_n) = (-1)^{n+1}\psi_n
	$$
for all $n$; now combine this with the equality $\anti{1}=\langle{1,-1,1,-1,\dots}\rangle$.

The symmetric functions $f_\lambda:=\omega(m_\lambda)$ are sometimes called the \emph{forgotten}\index{forgotten symmetric function}\index{symmetric function!forgotten} 
symmetric functions. Their span $\Forg:=\omega(\Lambda_\bN)$ contains $\Sch$. 
It represents the functor
	$$
	\Alg{\bN}(\Forg,A) = \anti{1}W(A),
	$$
which is the free $W(A)$-module generated by the symbol $\anti{1}$. The induced map
	$$
	\anti{1} W(A) \longmap \Wsch(A)
	$$
is a $W(A)$-module map, but $\anti{1}W(A)$ cannot be given an $\bN$-algebra structure making the map
an $\bN$-algebra map. In particular,
$\Forg$ is not a model for $\Lambda_\bZ$ as co-$\bN$-algebra object.

\subsection{Example: the map $\bN\to \Wsch(A)$ is injective unless $A=0$}
\label{subsec:N-to-W(A)-is-injective}
In fact, the map $\varphi$ from $\bN$ to any nonzero $\Sch$-semiring is injective. 
For if $m,n\in\bN$ have the same image and $m<n$, then we have
	$$
	0 = \varphi(\binom{m}{n}) = s_{1^n}(\varphi(m)) = s_{1^n}(\varphi(n)) = \varphi(\binom{n}{n}) = 1. 
	$$
So $\varphi$ is injective unless it is the map to the zero ring. In particular, the map from $\bN$ to any nonzero $\Lambda_\bN$-semiring is injective.

On the other hand, the $\Sch$-equivariant map $\bN[\eta]/(\eta^2=1)\to\Wsch(A)$ 
sending $\eta\to\anti{1}$ is not always injective.
Indeed, when $A=\bZ/2\bZ$, we have $[-1]=[1]=1$ and hence $1+\eta\mapsto 0$.

\section{Total positivity}
\label{sec:total-positivity}

In this section, we will give explicit descriptions of $W(\bRpl)$ and $\Wsch(\bRpl)$ and then use 
this to describe $W(\bN)$. These are very rich objects, and there is much more to say about
them than we can here.

We will find it convenient to use the series normalization
$\ser^+_+$ of~(\ref{subsec:W-coords-for-rings}), as well as
our standard one $\ser=\ser^-_-$. By~(\ref{eq:generating-function-identity}), we have
	$$
	\ser(x)  = \sum_i x(h_i)t^i, \quad\quad	\ser^+_+(x) = \sum_i x(e_i)t^i.
	$$
The two are related by the involution $f(t)\mapsto f(-t)^{-1}$. In other words, we have
	\begin{equation}
	\label{eq:change-series-normalization}
		\ser^+_+(x) = \ser(\anti{1}x).
	\end{equation}

\begin{proposition}\label{pro:mon-pos-coeff-bound}
	For any Witt vector $x\in W(\bRpl)$, write $1+a_1 t+ a_2t^2+\cdots$ 
	for the series $\ser^+_+(x)\in 1+t\bRpl[[t]]$. Then for all $n$, we have 
		\begin{equation}
		\label{eq:mon-pos-coeff-bound}
		a_n \leq \frac{a_1^n}{n!}.
		\end{equation}
	In particular, the series $\sum_n a_n t^n$ converges to an entire function on $\bC$.
\end{proposition}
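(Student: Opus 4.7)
My plan is to reduce the inequality to a positivity statement about a specific symmetric function and then apply $x$.

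First I would unwind the definition: by the formula $\ser^+_+(x) = \sum_i x(e_i)t^i$, we simply have $a_n = x(e_n)$, where $x\:\Lambda_\bN\to\bRpl$ is an $\bN$-algebra map. So the inequality $a_n \leq a_1^n/n!$ is equivalent to asking that the symmetric function
\[
e_1^n - n!\, e_n
\]
be sent to a nonnegative real number under $x$. It would be enough to show that this difference lies in $\Lambda_\bN$, because then the statement follows from $x$ being an $\bN$-algebra map into $\bRpl$.

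The key step is therefore a combinatorial identity. Expanding $e_1^n = (x_1+x_2+\cdots)^n$ by the multinomial theorem gives
\[
e_1^n = \sum_{\alpha:\,|\alpha|=n}\binom{n}{\alpha_1,\alpha_2,\dots}\, x^\alpha,
\]
and the terms with all $\alpha_i\in\{0,1\}$ are exactly the monomials in $n!\,e_n$ (each with coefficient $n!$). Thus $e_1^n - n!\,e_n$ is the sum of the remaining terms, whose coefficients are strictly positive integers; hence $e_1^n - n!\,e_n\in\Lambda_\bN$, and applying $x$ yields $a_1^n - n!\,a_n \geq 0$.

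For the final claim, once the bound $a_n \leq a_1^n/n!$ is in hand, a straightforward comparison with the exponential series shows that $\sum_n a_n t^n$ has infinite radius of convergence, since
\[
\sum_n a_n t^n \ \text{is dominated term-by-term in absolute value by}\ \sum_n \frac{(a_1|t|)^n}{n!} = e^{a_1|t|}
\]
on all of $\bC$; hence the sum defines an entire function. I do not anticipate any serious obstacle here: the only nontrivial ingredient is the multinomial positivity $e_1^n - n!\,e_n\in\Lambda_\bN$, which is immediate from expanding $e_1^n$, and the entirety assertion is then purely analytic.
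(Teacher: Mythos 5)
Your proof is correct, but it uses a different positivity identity than the paper does. You establish directly that $e_1^n - n!\,e_n\in\Lambda_\bN$ via the multinomial expansion of $e_1^n=(\sum_i x_i)^n$ (the squarefree part of that expansion is exactly $n!\,e_n$, and everything else has nonnegative coefficients), and then apply the $\bN$-algebra map $x$ to get $a_1^n - n!\,a_n = x(e_1^n - n!\,e_n)\geq 0$ in one step. The paper instead uses the identity $m_{2,1^{n-2}} = e_{n-1}e_1 - n\,e_n$, which gives the local inequality $a_{n-1}a_1\geq n\,a_n$ for each $n$, and then obtains the stated bound by induction. Both arguments reduce to monomial-positivity of an explicit element of $\Lambda_\bZ$; yours avoids the induction and is self-contained, while the paper's route yields the sharper intermediate ratio bounds $a_n\leq \tfrac{a_1}{n}a_{n-1}$, which encode more information about the coefficient sequence than the final estimate alone. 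Your treatment of the entirety claim by comparison with $e^{a_1|t|}$ matches the paper's (implicit) argument and is fine.
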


\begin{proof}
	Since $m_{2,1^{n-2}} = e_{n-1}e_1-n e_n$, we have 
		$
		a_{n-1}a_1-n a_n \geq 0.
		$
	Then (\ref{eq:mon-pos-coeff-bound}) follows by induction. 
\end{proof}

\subsection{Total positivity}
A formal series $\sum_n a_n t^n\in 1+t\bR[[t]]$ is said to be 
\emph{totally positive}\index{totally positive (power series)} if
all (finite) minors of the infinite matrix $(a_{i-j})_{ij}$ are $\geq 0$. (To be clear, we 
understand $a_0=1$ and $a_n=0$ for $n<0$. Other authors allow more general series.) 
For example, up to the $2\times 2$ minors these inequalities amount to the following:
	$$
	a_n \geq 0, \quad\quad a_n a_{n+i-j} \geq a_{n+i} a_{n-j}.
	$$
Chapter 8 of Karlin's book~\cite{Karlin:book} has an extensive treatment.

\begin{proposition}
\label{pro:W-Schur-is-totally-pos}
	A Witt vector $x\in W(\bR)$ lies in $\Wsch(\bRpl)$ if and only if the series corresponding
	to $x$ under the bijection $\ser\:W(\bR)\to 1+t\bR[[t]]$ is totally positive.
	The same is true for the bijection $\ser^+_+$.
\end{proposition}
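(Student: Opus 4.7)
The plan is to convert the statement into a question about Jacobi--Trudi determinants and then apply two classical positivity results from Macdonald, chapter~I: the expansion of the skew Schur function in ordinary Schur functions (Littlewood--Richardson) and the Jacobi--Trudi identity $s_\lambda=\det(h_{\lambda_i-i+j})$ itself.

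First, given $x\in W(\bR)$, write $\ser(x)=1+\sum_{n\geq 1}a_n t^n$ where $a_n=x(h_n)$, and consider the infinite matrix $H=(a_{i-j})_{i,j\geq 0}$. A minor with row indices $I=\{i_1<\dots<i_k\}$ and column indices $J=\{j_1<\dots<j_k\}$ equals $\det(a_{i_p-j_q})=x(\det(h_{i_p-j_q}))$, after extending $x$ to a ring homomorphism $\Lambda_\bZ\to\bR$. A change of variables $\lambda_p:=i_{k-p+1}-(k-p+1)$ and $\mu_p:=j_{k-p+1}-(k-p+1)$ (reversing the order of rows and columns, whose two sign changes cancel) transforms this into the skew Jacobi--Trudi determinant $s_{\lambda/\mu}=\det(h_{\lambda_i-\mu_j-i+j})$ evaluated at $x$; see Macdonald I.5 (5.4). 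Conversely, every skew partition $\lambda/\mu$ arises this way, and $\mu=\emptyset$ recovers exactly the ordinary Schur functions $s_\lambda$.

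With that correspondence in hand, both implications are essentially formal. If $x\in\Wsch(\bRpl)$, i.e.\ $x(s_\nu)\geq 0$ for every partition $\nu$, then the Littlewood--Richardson expansion $s_{\lambda/\mu}=\sum_\nu c^{\lambda}_{\mu\nu}s_\nu$ with $c^\lambda_{\mu\nu}\in\bN$ (Macdonald I.9) gives $x(s_{\lambda/\mu})\geq 0$, so every minor of $H$ is nonnegative; and if every minor is nonnegative, taking $\mu=\emptyset$ in the correspondence shows $x(s_\lambda)\geq 0$ for all $\lambda$, whence $x\in\Wsch(\bRpl)$.

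For the second series normalization, the cleanest route is to use the identity $\ser^+_+(x)=\ser(\anti{1}x)$ from~(\ref{eq:change-series-normalization}). Multiplication by $\anti{1}$ is an involution of $\Wsch$ (since $\anti{1}^2=[1]=1$), and it preserves $\Wsch(\bRpl)$ because $\anti{1}\in\Wsch(\bN)\subseteq\Wsch(\bRpl)$ and $\Wsch(\bRpl)$ is closed under its own multiplication. Hence $x\in\Wsch(\bRpl)\iff \anti{1}x\in\Wsch(\bRpl)\iff \ser(\anti{1}x)=\ser^+_+(x)$ is totally nonnegative, reducing to the case already proved. (One could alternatively repeat the argument using the dual Jacobi--Trudi formula $s_\lambda=\det(e_{\lambda'_i-i+j})$ on the Toeplitz matrix $(x(e_{i-j}))$, since conjugation $\lambda\mapsto\lambda'$ is a bijection on partitions.)

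The only step that is not purely formal is the reindexing in the second paragraph that matches an arbitrary minor of $H$ to a skew Jacobi--Trudi determinant; the rest is an assembly of results quoted from Macdonald's chapter~I. So the main obstacle is just keeping the bookkeeping of row/column indices against partitions straight and verifying that the correspondence is actually surjective onto all skew shapes (and bijective with ordinary shapes when $\mu=\emptyset$), which is what makes both directions available.
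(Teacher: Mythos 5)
Your proof is correct and follows essentially the same route as the paper's: identify the minors of the Toeplitz matrix $(x(h_{i-j}))$ with skew Schur functions via Macdonald (5.4), use the Littlewood--Richardson positivity of $s_{\lambda/\mu}$ together with the fact that every $s_\lambda$ is itself a skew Schur function to conclude that the $\bN$-span of the minors is exactly $\Sch$, and reduce the $\ser^+_+$ case to the $\ser$ case by multiplying by the invertible element $\anti{1}\in\Wsch(\bRpl)$. The only difference is that you spell out the row/column reindexing explicitly, which the paper leaves to the citation.
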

\begin{proof}
	First consider the universal series $1+h_1 t+ h_2 t^2+\cdots\in\Lambda_\bZ[[t]]$.
	Then it is a standard fact in algebraic combinatorics that the
	minors of the matrix $(h_{i-j})_{ij}$ generate $\Sch$ as an $\bN$-module.
	(The minors are the so-called {\em skew Schur functions}\index{skew Schur functions} $s_{\lambda/\mu}$, by
	Macdonald, p.\ 70 (5.4)~\cite{Macdonald:SF}. Their $\bN$-span contains $\Sch$ because
	every Schur function is a skew Schur function.
	For the other containment, see (9.1) and the following text on p.\ 142 of Macdonald.)
	Therefore $x$, viewed as a ring map $\Lambda_\bZ\to\bR$, sends $\Sch$ to $\bRpl$ if
	and only if the corresponding series is totally positive.
	
	For the second statement, combine the above with~(\ref{eq:change-series-normalization}) and
	the fact that $\anti{1}$ is an element of $\Wsch(A)$ and is invertible.
\end{proof}

\begin{theorem}[Edrei~\cite{Edrei:totally-positive}, Thoma~\cite{Thoma:totally-positive}]
\label{thm:Edrei-Thoma}
	The totally positive series in $1+t\bR[[t]]$ are precisely those of the form
		\begin{equation}
		\label{eq:Edrei-Thoma}
		e^{\gamma t} \frac{\prod_{i=1}^\infty(1+\alpha_i t)}{\prod_{i=1}^{\infty} (1-\beta_i t)},
		\end{equation}
	where $\gamma,\alpha_i,\beta_i\geq 0$ (and both $\sum_i \alpha_i$ and $\sum_i\beta_i$ 
	converge), and every such representation is unique.
\end{theorem}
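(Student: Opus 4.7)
The plan is to split the theorem into three tasks: sufficiency of the given form, necessity (every totally nonnegative series has this form), and uniqueness of the representation. By~(\ref{pro:W-Schur-is-totally-pos}) and the bijection $\ser\:\Wsch(\bR)\longisomap 1+t\bR[[t]]$, both directions become statements about the subset $\Wsch(\bRpl)\subseteq\Wsch(\bR)$.

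For sufficiency, first I would compute directly from the definitions of Teichm\"uller and anti-Teichm\"uller lifts that
$$
\ser([\beta])=\frac{1}{1-\beta t},\qquad \ser(\anti{\alpha})=1+\alpha t;
$$
these follow from $[a](h_n)=a^n$ and from $\anti{a}(h_0)=1$, $\anti{a}(h_1)=a$, $\anti{a}(h_n)=0$ for $n\geq 2$. Since $\Wsch(\bRpl)$ is an $\bN$-algebra and $\ser$ intertwines addition with series multiplication, every finite product $\prod(1+\alpha_i t)\prod(1-\beta_j t)^{-1}$ with $\alpha_i,\beta_j\in\bRpl$ lies in $\ser(\Wsch(\bRpl))$. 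For the exponential factor and the infinite products, I would use that $\Wsch(\bRpl)=\Alg{\bN}(\Sch,\bRpl)$ is closed under pointwise limits inside $\bR^{\Sch}$: the $\bN$-algebra identities are closed conditions and $\bRpl$ is closed in $\bR$. Writing $e^{\gamma t}\prod(1+\alpha_i t)/\prod(1-\beta_i t)$ as the pointwise limit, evaluated on each Schur function, of its finite truncations $(1+\gamma t/n)^n\prod_{i\leq N}(1+\alpha_i t)(1-\beta_i t)^{-1}$, convergence reduces to termwise convergence of the coefficients of $t^m$, which follows from the assumptions $\sum\alpha_i,\sum\beta_i<\infty$.

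For necessity I would cite Edrei's theorem~\cite{Edrei:totally-positive}. The classical strategy is to show that the meromorphic extension of $\sum a_n t^n$ has all poles real and positive (yielding the $\beta_i$) and all zeros real and negative (yielding the $\alpha_i$), then to apply Hadamard factorization to the auxiliary entire function $f(t)\prod(1-\beta_i t)$, which one proves has order at most one. An alternative route is Thoma's~\cite{Thoma:totally-positive}: $\bN$-algebra maps $\Sch\to\bRpl$ correspond, under the Frobenius characteristic, to normalized characters of the infinite symmetric group $S_\infty$, whose classification yields the stated parametrization. This necessity direction is the main obstacle: both proofs are genuinely analytic (complex function theory on one hand, ergodic methods on the Young graph on the other) and have no purely algebraic substitute. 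Uniqueness is then a formal consequence: $\gamma$, the $\alpha_i$, and the $\beta_i$ are recovered respectively as the exponential type of $f$ and as the negatives of the reciprocals of its zeros and the reciprocals of its poles, counted with multiplicity.
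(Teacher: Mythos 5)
Your proposal is correct, but note that the paper does not prove this theorem at all: it is stated as a citation (Edrei, Thoma), with the reader referred to Karlin's book for a proof. So there is no internal argument to compare against; what you have done instead is decompose the result and show which parts are recoverable from the paper's own machinery. That decomposition is accurate and worthwhile. The sufficiency half, as you argue, follows from (\ref{pro:W-Schur-is-totally-pos}) together with the identities $\ser([\beta])=(1-\beta t)^{-1}$ and $\ser(\anti{\alpha})=1+\alpha t$, the fact that $\ser$ carries addition in $\Wsch(\bRpl)$ to multiplication of series, and the closedness of $\Alg{\bN}(\Sch,\bRpl)$ in $\bR^{\Sch}$ under pointwise limits; this is precisely the style of argument the paper itself deploys for the monomial analogue in the second half of (\ref{cor:Witt-Edrei-Thoma}), and there is no circularity since (\ref{pro:W-Schur-is-totally-pos}) rests only on the skew-Schur expansion of minors, not on Edrei--Thoma. (One could even bypass the Witt formalism here: finite products $\prod(1+\alpha_i t)\prod(1-\beta_j t)^{-1}$ are totally nonnegative by Cauchy--Binet, and total nonnegativity is a closed condition; your route packages Cauchy--Binet into the Littlewood--Richardson positivity underlying (\ref{pro:Schur-is-a-model}).) You are also right that the necessity direction is genuinely analytic with no algebraic substitute, and your uniqueness argument via the exponential type, zeros, and poles of the meromorphic extension is sound, since the zeros lie on the negative real axis and the poles on the positive one, so no cancellation can occur. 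In short: the proposal is a correct and more informative treatment than the paper's bare citation, provided the necessity step is honestly attributed to \cite{Edrei:totally-positive} or \cite{Thoma:totally-positive} rather than claimed as proved.
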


For a proof, see Karlin's book~\cite{Karlin:book}, in which the result is theorem 5.3, p.\ 412.

\begin{corollary}
\label{cor:Witt-Edrei-Thoma}
	Consider a Witt vector $x\in W(\bR)$. Then $x$ lies in $\Wsch(\bRpl)$ if and only if
	$\ser^+_+(x)$ is of the form
		\begin{equation}
		e^{\gamma t} \frac{\prod_{i=1}^\infty(1+\alpha_i t)}{\prod_{i=1}^{\infty} (1-\beta_i t)},
		\end{equation}
	where $\gamma,\alpha_i,\beta_i\geq 0$. Similarly, $x$ lies in $W(\bRpl)$ if and only if 
	$\ser^+_+(x)$ is of the form
		\begin{equation}
		\label{eq:monomial-Edrei-Thoma}
		e^{\gamma t} \prod_{i=1}^\infty(1+\alpha_i t),
		\end{equation}
	where $\gamma,\alpha_i\geq 0$.
\end{corollary}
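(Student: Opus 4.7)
The first statement follows immediately by combining Proposition~\ref{pro:W-Schur-is-totally-pos}, which identifies $\Wsch(\bRpl)\subseteq W(\bR)$ with the set of totally nonnegative power series under $\ser^+_+$, with the Edrei--Thoma theorem (Theorem~\ref{thm:Edrei-Thoma}), which lists these series as exactly the products appearing in~(\ref{eq:Edrei-Thoma}).

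For the forward direction of the second statement, suppose $x\in W(\bRpl)$. Via the canonical map $W(\bRpl)\to\Wsch(\bRpl)$, the first part already gives $\ser^+_+(x)=e^{\gamma t}\prod_i(1+\alpha_i t)/\prod_i(1-\beta_i t)$ for some $\gamma,\alpha_i,\beta_i\geq 0$. The new ingredient is Proposition~\ref{pro:mon-pos-coeff-bound}, which forces $\ser^+_+(x)$ to extend to an entire function on $\bC$. The Edrei--Thoma convergence hypothesis $\sum_i\beta_i<\infty$ makes $\prod_i(1-\beta_i t)$ an entire function whose zeros are exactly the positive reals $t=1/\beta_i$, while the numerator $e^{\gamma t}\prod_i(1+\alpha_i t)$ is entire with zeros only on the negative real axis. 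Since these two zero sets are disjoint, no cancellation is possible in the quotient, and entirety forces $\beta_i=0$ for all~$i$.

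For the converse, I would build $x$ from Teichm\"uller lifts followed by a limiting procedure. For $\alpha\geq 0$ the Teichm\"uller lift $[\alpha]\in W(\bRpl)$ of~(\ref{subsec:Teich}) satisfies $[\alpha](e_n)=[\alpha](m_{1^n})=0$ for $n\geq 2$ and $[\alpha](e_1)=\alpha$, hence $\ser^+_+([\alpha])=1+\alpha t$. Because $\ser^+_+$ sends addition in $W$ to multiplication of series, the finite Witt vector
$$
x_{N,n}\dfn n\cdot[\gamma/n]+\sum_{i=1}^N[\alpha_i]\in W(\bRpl)
$$
satisfies $\ser^+_+(x_{N,n})=(1+\gamma t/n)^n\prod_{i=1}^N(1+\alpha_i t)$, which converges coefficientwise to~(\ref{eq:monomial-Edrei-Thoma}) as $n,N\to\infty$. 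The remaining task is to show the $x_{N,n}$ themselves converge in the pro-set $W(\bRpl)\subseteq\bRpl^{\Lambda_\bN}$ of~(\ref{subsec:W-topology}) to some $\bN$-algebra map $x\:\Lambda_\bN\to\bRpl$. Since any $f\in\Lambda_\bN$ is a polynomial with rational coefficients in finitely many power sums $\psi_j$, and $x_{N,n}(\psi_j)=\gamma^j/n^{j-1}+\sum_{i\leq N}\alpha_i^j$ converges as $n,N\to\infty$ (using $\sum_i\alpha_i<\infty$ together with boundedness of the $\alpha_i$ to dominate $\sum_i\alpha_i^j$), the limit $x(f)$ exists and is nonnegative because each approximant is. The main obstacle is this limiting step, in particular verifying that the pointwise limit stays inside $W(\bRpl)$ rather than escaping into $W(\bR)$; this is secured by the nonnegativity of each $x_{N,n}(f)$ and the fact that the $\bN$-algebra relations defining $\Lambda_\bN$ are closed under pointwise limits in $\bRpl$.
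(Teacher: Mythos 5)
Your proposal is correct and follows essentially the same route as the paper: part one by combining Proposition~\ref{pro:W-Schur-is-totally-pos} with Theorem~\ref{thm:Edrei-Thoma}; the forward direction of part two via the entirety bound of Proposition~\ref{pro:mon-pos-coeff-bound} (you spell out the pole/zero cancellation argument that the paper leaves implicit); and the converse by approximating with $\sum_i[\alpha_i]+n[\gamma/n]$ and invoking closedness of $W(\bRpl)$ under coefficientwise limits. The only cosmetic difference is that the paper expresses this closedness via the polynomial effectivity conditions~(\ref{eq:effectivity-conditions-for-W(A)}) on the series coefficients, whereas you phrase it as pointwise convergence of the functionals on $\Lambda_\bN$; these are equivalent.
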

\begin{proof}
	The first part follows from~(\ref{pro:W-Schur-is-totally-pos}) and~(\ref{thm:Edrei-Thoma}).

	Now consider the second part.
	The first part and~(\ref{pro:mon-pos-coeff-bound}) imply that for every Witt vector
	$x\in W(\bRpl)$, the series
	$\ser^+_+(x)$ is of the form~(\ref{eq:monomial-Edrei-Thoma}).
	Conversely, because we have $1+\alpha t = \ser^+_+([\alpha])$ and
		$$
		e^{\gamma t} = \lim_{n\to\infty} \Big(1+\frac{\gamma t}{n}\Big)^n,
		$$
	any series of the form~(\ref{eq:monomial-Edrei-Thoma}) is a limit of finite products
	of series in $\ser^+_+(W(\bRpl))$. Now observe that $\ser^+_+$ identifies $W(\bRpl)$
	with a submonoid of $1+t\bRpl[[t]]$. Further it is a closed subset
	because it is defined by a family of nonnegativity conditions, as 
	in~(\ref{eq:effectivity-conditions-for-W(A)}). Therefore any series of the 
	form~(\ref{eq:monomial-Edrei-Thoma}) lies in the image of $\ser^+_+$.
	(Compare Kingman~\cite{Kingman:partition-structures,Kingman:coalescent}.)
\end{proof}

\subsection{Remark: $W(\bRpl)$ and $\Wsch(\bRpl)$ as convergent monoid algebras} 
\label{rmk:ring-expression-of-E-T}
Equivalently, the subset $\Wsch(\bRpl)$ consists of the Witt vectors in $W(\bR)$ that
can be represented (necessarily uniquely) in the form
	$$
	\sum_{i=1}^\infty[\alpha_i]+\sum_{i=1}^\infty
	\anti{\beta_i}+[\gamma]\xi,
	$$
where $\gamma,\alpha_i,\beta_i\geq 0$ (and both $\sum_i \alpha_i$ and $\sum_i\beta_i$ 
converge) and where $\xi$ is the Witt vector with ghost
components $\langle{1,0,0,\dots}\rangle$, or equivalently such that $\ser(\xi)=e^t$.
Another interpretation is that the evident map is an isomorphism
	$$
	\Nhat{\bRpl} \tn_\bN (\bN[\eta]/(\eta^2=1)) \oplus \bRpl\xi\ \longisomap\ \Wsch(\bRpl).
	$$
Similarly, we have $W(\bRpl)=\Nhat{\bRpl}\oplus\bRpl\xi$.

\subsection{Remark: $W(\bRpl)$ and entire functions}
If we view (\ref{eq:monomial-Edrei-Thoma}) as a Hadamard factorization, then we see that yet
another interpretation of $W(\bRpl)$ is that it is the set of entire functions $f$ on $\bC$ of
order at most $1$ such that $f(0)=1$, the zeros of $f$ are negative real numbers, and $p=0$
in the notation of lecture 4 of Levin's book~\cite{Levin:Entire-functions}.

\begin{corollary}
\label{cor:W-of-discrete-is-polys}
	$W(\bN)$, viewed as a subset of 
	$1+t\bZ[[t]]$ by the map $\ser^+_+$,
	agrees with the set of polynomials in $1+t\bZ[t]$ whose complex roots are all real and negative.
	In particular, $W(\bN)$ is countable.
\end{corollary}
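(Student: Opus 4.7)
The plan is to read off the characterization of $W(\bN)$ from Corollary~\ref{cor:Witt-Edrei-Thoma} by using that $W(\bN)$ embeds in both $W(\bZ)$ (and hence in $1+t\bZ[[t]]$ via $\ser^+_+$) and $W(\bRpl)$. I would prove the two inclusions separately and then dispatch the countability remark.

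\textbf{Forward inclusion.} Let $x \in W(\bN)$ and write $\ser^+_+(x) = 1 + a_1 t + a_2 t^2 + \cdots$. Since $e_n = m_{(1^n)} \in \Lambda_\bN$, each coefficient $a_n = x(e_n)$ lies in $\bN$, so $\ser^+_+(x) \in 1+t\bZ[[t]]$. Viewing $x$ inside $W(\bRpl)$, Corollary~\ref{cor:Witt-Edrei-Thoma} yields $\ser^+_+(x) = e^{\gamma t}\prod_{i\geq 1}(1+\alpha_i t)$ with $\gamma,\alpha_i \geq 0$. I would then invoke Proposition~\ref{pro:mon-pos-coeff-bound}: the bound $a_n \leq a_1^n/n!$ together with $a_n \in \bN$ forces $a_n = 0$ for all sufficiently large $n$, so $\ser^+_+(x)$ is a polynomial. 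Its roots are $-1/\alpha_i$ for the finitely many nonzero $\alpha_i$, hence nonpositive real.

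\textbf{Converse inclusion.} Given $f(t) = \prod_{i=1}^d(1+\alpha_i t) \in 1+t\bZ[t]$ with each $\alpha_i \geq 0$, I would form $x := \sum_{i=1}^d [\alpha_i]$ in $W(\bR)$. Since Teichm\"uller lifts satisfy $[\alpha](\psi_n) = \alpha^n$ and Witt addition is componentwise on ghost vectors, $x$ has $n$th ghost component $\sum_i \alpha_i^n$, so $x(g) = g(\alpha_1,\dots,\alpha_d,0,0,\dots)$ for every symmetric function $g$. It remains to check $x(m_\lambda) \in \bN$ for every partition $\lambda$. Nonnegativity is immediate since $m_\lambda$ has nonnegative coefficients and each $\alpha_i \geq 0$. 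For integrality, write $m_\lambda = P_\lambda(e_1,e_2,\dots)$ with $P_\lambda \in \bZ[e_1,e_2,\dots]$ (using $\Lambda_\bZ = \bZ[e_1,e_2,\dots]$); then $x(m_\lambda) = P_\lambda(e_1(\alpha),e_2(\alpha),\dots)$, and each $e_n(\alpha_1,\dots,\alpha_d)$ is precisely the coefficient of $t^n$ in $f$, hence in $\bZ$. Thus $x(m_\lambda) \in \bN$, so $x$ defines an $\bN$-algebra homomorphism $\Lambda_\bN \to \bN$, that is, $x \in W(\bN)$, with $\ser^+_+(x) = f$ by construction.

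\textbf{Countability and main obstacle.} Countability follows because $\bZ[t]$ is countable and $\ser^+_+$ is injective on $W(\bN)\subseteq W(\bZ)$. The main obstacle is the converse step: the sub-$\bN$-algebra $\bN[e_1,e_2,\dots]$ is strictly smaller than $\Lambda_\bN$ by~(\ref{subsec:non-models}), so the integer-nonnegative coefficients of $f$ alone do not directly verify the effectivity conditions~(\ref{eq:effectivity-conditions-for-W(A)}) for $W(\bN)$. What rescues the argument is that integrality and nonnegativity come from entirely separate inputs, $\Lambda_\bZ = \bZ[e_n]$ on one hand and $\alpha_i \geq 0$ on the other, and only their combination pins down membership in $\bN$.
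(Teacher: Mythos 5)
Your proof is correct, and the forward direction is exactly the paper's argument: embed $W(\bN)$ in $W(\bRpl)$, apply Corollary~\ref{cor:Witt-Edrei-Thoma} to get the shape $e^{\gamma t}\prod_i(1+\alpha_i t)$, and use the bound $a_n\leq a_1^n/n!$ of Proposition~\ref{pro:mon-pos-coeff-bound} together with integrality of the $a_n$ to force the series to be a polynomial (and hence $\gamma=0$), so its roots are the $-1/\alpha_i$.

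Where you diverge is the converse. The paper simply observes that a polynomial in $1+t\bZ[t]$ with nonpositive real roots is a series of the form~(\ref{eq:monomial-Edrei-Thoma}) with integer coefficients and then quotes the identification $W(\bN)=W(\bZ)\cap W(\bRpl)$ together with the converse half of Corollary~\ref{cor:Witt-Edrei-Thoma} --- whose proof required a limiting argument (finite products of Teichm\"uller lifts plus the closedness of $\ser^+_+(W(\bRpl))$ in $1+t\bRpl[[t]]$). You instead build the Witt vector directly as $x=\sum_{i=1}^d[\alpha_i]$ and check the effectivity conditions by hand: $x(m_\lambda)=m_\lambda(\alpha_1,\dots,\alpha_d,0,\dots)\geq 0$ from $\alpha_i\geq 0$, and $x(m_\lambda)\in\bZ$ by expressing $m_\lambda$ as an integer polynomial in the $e_n$, whose values are the coefficients of $f$. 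Since only finitely many factors occur, no limit or closedness argument is needed, so your converse is self-contained and more elementary for this special case; the price is that it does not reprove the full statement of Corollary~\ref{cor:Witt-Edrei-Thoma}, which the paper's phrasing reuses for free. Your closing remark correctly identifies the one genuine subtlety --- that $\bN[e_1,e_2,\dots]\subsetneq\Lambda_\bN$ by~(\ref{subsec:non-models}), so integrality and nonnegativity of the coefficients of $f$ must be fed in through separate channels --- and your resolution is sound.
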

\begin{proof}
	We have
		$
		W(\bN) = W(\bZ\cap\bRpl) = W(\bZ) \cap W(\bRpl).
		$
	By~(\ref{cor:Witt-Edrei-Thoma}), elements on the right-hand side correspond to series of the 
	form~(\ref{eq:monomial-Edrei-Thoma}) with coefficients in $\bZ$. Certainly this includes
	all the polynomials in $1+t\bZ[t]$ with only negative real roots. Conversely, 
	the coefficients of such a series tend to $0$, by (\ref{pro:mon-pos-coeff-bound});
	so all such series are polynomials.
\end{proof}

\subsection{Remark} 

We can reinterpret this in a way that treats the finite and infinite places of $\bQ$ as similarly
as possible. A monic $p$-adic polynomial has coefficients in $\bZ_p$ if and only if all its roots
are integral over $\bZ_p$. Therefore $W(\bN)$, viewed as a subset of $1+t\bQ[[t]]$ via $\ser^+_+$,
is the set of polynomials that when written as $\prod_i(1+\alpha_i t)$, have the property that
every $\alpha_i$ is integral at each finite place and is real and positive at the infinite place.

Rephrasing again, if $\sO_{\bar{\bQ}}^{\mathrm{tp}}$ denotes the multiplicative monoid of algebraic
numbers which are integral at all finite places and which are real and positive at all infinite
places, then we have
	\begin{equation}
	\label{eq:W(N)-formula}	
		W(\bN) = \bN[\sO_{\bar{\bQ}}^{\mathrm{tp}}]^{\Gal(\bar\bQ/\bQ)}.
	\end{equation}

\subsection{Counterexample: $W$ does not preserve surjectivity}
Indeed, $\bN$ surjects onto a nonzero ring, for instance $\bZ/2\bZ$. But $W(\bN)$ is countable
while $W$ applied to any nonzero ring is uncountable. Of course, $W$ does preserve surjectivity
for maps between rings, by~(\ref{subsec:W-coords-for-rings}).

In fact, $\Wsch(\bN)$ is also countable. This will be shown in forthcoming work
with Darij Grinberg~\cite{Borger-Grinberg:W(Bool)}. It follows that $\Wsch$ does not preserve surjectivity 
either. 

\subsection{Remark}
There is a multi-dimensional generalization of~(\ref{cor:W-of-discrete-is-polys}). Let
$A$ be a discrete subring of $\bR^n$, for some $n\geq 0$, and write $A_+=A\cap\bRplex{n}$. Then
$W(A_+)$, viewed as a subset of $W(A)=1+tA[[t]]$, consists of the polynomials which split 
completely over $\bRplex{n}$. In particular, this applies to any totally real number field $K$,
in which case we have the following generalization of~(\ref{eq:W(N)-formula}): 
	\begin{equation}
	\label{eq:W(tot-real)-formula}	
		W(\sO_{K,+}) = \bN[\sO_{\bar{\bQ}}^{\mathrm{tp}}]^{\Gal(\bar{\bQ}/K)}.
	\end{equation}
In particular, there are countably many Witt vectors with entries in the algebra of
algebraic integers that are real and nonnegative at all infinite places.

\section{A model for the $p$-typical symmetric functions over $\bN$}
\label{sec:p-typical-model}

\subsection{$p$-typical Witt vectors and general truncation sets}
\label{subsec:truncation-sets}

Following Bergman (lecture 26 of~\cite{Mumford:Lectures-on-curves}),
let us say a set $S$ of positive integers is a \emph{truncation set}\index{truncation set} if it is closed 
under taking divisors. For any truncation set $S$, write 
	$$
	\Lambda_{\bZ,S}:=\bZ[\theta_d\mid d\in S] \subseteq \bZ[\theta_1,\theta_2,\dots] =\Lambda_\bZ,
	$$
where the $\theta_d$ are the Witt symmetric functions of~(\ref{subsec:elem-and-Witt-symm-functions}). 
For any ring $A$, write
	$$
	W_S(A):=\Alg{\bZ}(\Lambda_{\bZ,S},A)
	$$
for the corresponding ring of Witt vectors.  

The induced map $W(A)\to W_S(A)$ is surjective for all $A$. Indeed, any retraction
$\Lambda_\bZ\to\Lambda_{\bZ,S}$ gives a functorial section. The quotient $W_S(A)$ of $W(A)$ is in fact a
quotient ring, or equivalently $\Lambda_{\bZ,S}$ is a sub-$\bZ$-$\bZ$-algebra of $\Lambda_\bZ$. One can show
this as follows: By induction on $S$, the ring $\bQ\tn_\bZ\Lambda_{\bZ,S}$ agrees with $\bQ[\psi_d\mid d\in
S]$, which is a sub-$\bQ$-$\bZ$-algebra of $\Psi_{\bQ}$; therefore $\Delta^+(\Lambda_{\bZ,S})$ is contained in
$\Lambda_\bZ^{\tn 2} \cap (\bQ\tn_\bZ\Lambda_{\bZ,S})^{\tn 2}$ and hence $\Lambda_{\bZ,S}^{\tn 2}$.

This functor $W_S\:\Alg{\bZ}\to\Alg{\bZ}$ then agrees with the usual one in
Bergman~\cite{Mumford:Lectures-on-curves}, at least up to canonical isomorphism. Similarly, it agrees with the
Witt vector functors of~\cite{Borger:BGWV-I}, as long as $S$ is of the form $\setof{d}{d\text{ divides }m}$ for
some integer $m\geq 1$. (The functors in~\cite{Borger:BGWV-I} are defined only in that context. Note however
that such truncation sets form a cofinal family.) More precisely, $W_S$ and $\Lambda_{\bZ,S}$ agree with the
objects $W_{\bZ,E,n}$ and $\Lambda_{\bZ,E,n}$ defined in~\cite{Borger:BGWV-I}, section 1, where $E$ denotes the
set of prime divisors of $m$, and $n\in\bN^E$ is the vector with components $n_p=\ord_p(m)$.

When $S=\{1,p,\dots,p^k\}$ for some prime $p$, the functor $W_S$ is the $p$-typical Witt vector
functor of length $k$ (or more traditionally $k+1$) discussed in the introduction. In this case, we
will write $W_{(p),k}(A)$ and $\Lambda_{\bZ,(p),k}$ instead of $W_S$ and $\Lambda_{\bZ,S}$. When
$k=\infty$, we will also write $W_{(p)}$ and $\Lambda_{\bZ,(p)}$.

\begin{question}
Does $\Lambda_{\bZ,S}$ have a model $\Lambda_{\bN,S}$ over $\bN$ as a co-$\bN$-algebra object? If
so, are there models such that $\Lambda_{\bN,S}\circ\Lambda_{\bN,S'}\subseteq \Lambda_{\bN,SS'}$,
where $SS'=\setof{ss'}{s\in S, s'\in S'}$? 
\end{question}

The purpose of this section is to show the answers are {\em yes} in the $p$-typical case, when $S$ and
$S'$ contain only powers of a single prime $p$.

\subsection{Positive $p$-typical symmetric functions}
\label{subsec:p-typical-symmetric-functions}
Let $p$ be a prime number. Recall the notation
	$$
	d_p:=\frac{e^p-\psi_p}{p} = m_{p-1,1} + \cdots + (p-1)! m_{1^p} \in \Lambda_\bN\cap 
		\Lambda_{\bZ,(p),1}.
	$$
Let $\lttn{k}$ denote the sub-$\bN$-algebra of $\Lambda_{\bZ,(p),k}$
generated by the set $\setof{\psi_p^{\circ i}\circ d_p^{\circ j}}{i+j\leq k}$, and write
	$$
	A_k = \bN[x_{i,j}\mid 0\leq i+j\leq k]/(x_{i,j}^p=x_{i+1,j}+px_{i,j+1} \mid i+j\leq k-1).
	$$
So $A_k$ is an algebra over $\bN[x_{i,j}\mid i+j= k]$; as a module, it is free of rank 
$p^{\frac{k(k+1)}{2}}$.

\begin{lemma}
	\label{lem:p-typical-model}
	The $\bN$-algebra map $\bN[x_{i,j}\mid i+j\leq k]\to \lttn{k}$ sending $x_{i,j}$ to 
	$\psi_p^{\circ i}\circ d_p^{\circ j}$ factors through $A_k$, and the induced map
	is an isomorphism
		\begin{equation}
		\label{map:lambda_p,k-presentation}
		A_k \longisomap \lttn{k}.
		\end{equation}
	In particular, $\bN[\psi_p^{\circ i}\circ d_p^{\circ j}\mid i+j=k]$ is freely generated as an $\bN$-algebra by the $k+1$ elements $\psi_p^{\circ i}\circ d_p^{\circ j}$, and $\lttn{k}$ is a free module of rank $p^{\frac{k(k+1)}{2}}$ over it.
\end{lemma}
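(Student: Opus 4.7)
First, I would verify that the relation $x_{i,j}^p = x_{i+1,j} + p \cdot x_{i,j+1}$ holds in $\Lambda_\bZ$ (hence in $\lttn{k}$ whenever $i+j \leq k-1$). The definition $d_p = (e^p - \psi_p)/p$ gives $h^p = \psi_p(h) + p\, d_p(h)$ for every $h \in \Lambda_\bZ$, and a direct computation shows $\psi_p \circ d_p = d_p \circ \psi_p$ (both sides act on any $h$ as $(\psi_p(h)^p - \psi_{p^2}(h))/p$). Applying the first identity to $h = \psi_p^{\circ i} \circ d_p^{\circ j}$ and using this commutation yields the relation, so the $\bN$-algebra map $\varphi\: A_{k,k} \to \lttn{k}$ is well-defined; it is surjective by construction.

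Next, I would produce a spanning set via a Noetherian rewriting. Assign each monomial $\prod x_{i,j}^{a_{ij}}$ the weight $w = \sum a_{ij}\, p^{k-i-j}$. The substitution $x_{i,j}^p \rightsquigarrow x_{i+1,j} + p\, x_{i,j+1}$ for $i+j \leq k-1$ replaces a factor of weight $p^{k-i-j+1}$ by two summands of weight $p^{k-i-j-1}$ each, strictly decreasing the total weight. Iterating, any element of $A_{k,k}$ rewrites as an $\bN[y_0, \ldots, y_k]$-linear combination of the $p^{k(k+1)/2}$ ``reduced'' monomials $M_{\underline m} := \prod_{i+j<k} x_{i,j}^{m_{ij}}$ with $0 \leq m_{ij} < p$, where $y_j := x_{k-j,j}$.

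For algebraic independence and the rank, I would work in $\Lambda_{\bQ,(p),k} = \bQ[\psi_1, \psi_p, \ldots, \psi_{p^k}]$, which equals $\bQ \otimes_\bN \lttn{k}$ since each $\psi_{p^i}$ equals the generator $x_{i,0}$. The element $y_j = \psi_p^{\circ(k-j)}(d_p^{\circ j})$ lies in $\bQ[\psi_{p^{k-j}}, \ldots, \psi_{p^k}]$, and an easy induction (with $c_{j+1} = c_j^p/p \neq 0$ starting from $c_0 = 1$) shows $d_p^{\circ j}$ contains a nonzero pure monomial $c_j\, \psi_1^{p^j}$, whence $y_j$ contains $c_j\, \psi_{p^{k-j}}^{p^j}$. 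Thus the Jacobian $(\partial y_j/\partial \psi_{p^{k-l}})_{0\leq j,l\leq k}$ is lower triangular with nonzero diagonal, proving algebraic independence of $y_0, \ldots, y_k$ over $\bQ$. Solving $y_j = \mathrm{const}$ recursively for $\psi_{p^{k-j}}$ (a degree-$p^j$ equation once $\psi_{p^{k-j+1}}, \ldots, \psi_{p^k}$ are fixed) shows the finite map $\bA^{k+1}_\bQ \to \bA^{k+1}_\bQ$ has generic fiber of size $\prod_{j=0}^{k} p^j = p^{k(k+1)/2}$. Hence $\Lambda_{\bQ,(p),k}$ is free of rank $p^{k(k+1)/2}$ over the polynomial subalgebra $\bQ[y_0, \ldots, y_k]$.

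Putting it together: the $p^{k(k+1)/2}$ images $\varphi(M_{\underline m})$ span $\Lambda_{\bQ,(p),k}$ over $\bQ[y_0, \ldots, y_k]$ (by the second paragraph applied over $\bQ$) and their number matches the rank, so they form a basis. Since $\lttn{k}$ embeds in $\Lambda_{\bQ,(p),k}$, these remain $\bN[y_0, \ldots, y_k]$-linearly independent in $\lttn{k}$; combined with spanning, this forces $\varphi$ to be an isomorphism, and $\lttn{k}$ is then free of rank $p^{k(k+1)/2}$ over the free polynomial subalgebra $\bN[y_0, \ldots, y_k]$. The main obstacle is the fiber count in the third paragraph: one must inductively track that $d_p^{\circ j}$ retains a nonzero $\psi_1^{p^j}$-coefficient, which is what makes the Jacobian triangular and yields the precise rank $p^{k(k+1)/2}$.
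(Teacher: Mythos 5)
Your proof is correct, and its first step (verifying $x_{i,j}^p=x_{i+1,j}+px_{i,j+1}$ via $e^p=\psi_p+pd_p$ and the centrality of $\psi_p$, hence a well-defined surjection $\varphi\:A_{k,k}\to\lttn{k}$) coincides with the paper's. The injectivity argument, however, is genuinely different. The paper asserts that $A_{k,k}$ is free as an $\bN$-module, reduces injectivity to the statement after $\tn\,\bQ$, and then shows by a descending induction through the intermediate algebras $A_{k,l}$ that $\bQ\tn_\bN A_{k,k}$ is generated as a $\bQ$-algebra by $x_{0,0},\dots,x_{k,0}$ alone; since these map to the free generators $\psi_{p^i}$ of $\bQ\tn_\bZ\Lambda_{\bZ,(p),k}$, the surjection onto that polynomial ring must be injective. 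You instead exhibit a spanning set of $A_{k,k}$ over $\bN[y_0,\dots,y_k]$ by a terminating rewriting, compute the rank of $\Lambda_{\bQ,(p),k}$ over $\bQ[\varphi(y_0),\dots,\varphi(y_k)]$ by a Jacobian and fiber count, and match the two numbers $p^{k(k+1)/2}$. Your route is longer but buys two things: it establishes the ``in particular'' clause (freeness of $\lttn{k}$ of the stated rank over the polynomial subalgebra on the generators with $i+j=k$) as part of the argument rather than reading it off afterwards, and it does not presuppose that the presented $\bN$-algebra $A_{k,k}$ is free over $\bN$---a confluence-type fact the paper invokes without proof---since linear independence of the reduced monomials falls out of your rank count. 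The paper's argument is shorter because it needs no counting: once the top-row generators are known to generate rationally, freeness of the target does all the work. One point to tighten in your third paragraph: for the induction $c_{j+1}=c_j^p/p$ you should track the full leading term of $d_p^{\circ j}$ in $\psi_1$ (degree exactly $p^j$ with \emph{constant} leading coefficient $c_j$, using that $\psi_p\circ d_p^{\circ j}$ is $\psi_1$-free), not merely the presence of the monomial $c_j\psi_1^{p^j}$; this is what rules out cancellation of the top coefficient in $(d_p^{\circ j})^p$ and guarantees that $y_j=\mathrm{const}$ has degree exactly $p^j$ in $\psi_{p^{k-j}}$.
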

\begin{proof}
	A morphism $\varphi\:A_k\to\lttn{k}$ sending $x_{i,j}$ to 
	$\psi_p^{\circ i}\circ d_p^{\circ j}$ exists because we have
	\begin{align}
	\label{eq:local-485}	
	\begin{split}
	(\psi_p^{\circ i}\circ d_p^{\circ j})^p &= e^p\circ(\psi_p^{\circ i}\circ d_p^{\circ j}) \\
		&= (\psi_p+pd_p)\circ(\psi_p^{\circ i}\circ d_p^{\circ j}) \\
		&= \psi_p^{\circ (i+1)}\circ d_p^{\circ j} + p\psi_p^{\circ i}\circ d_p^{\circ (j+1)}.
		\end{split}
	\end{align}
	since $\psi_p$ commutes under composition with every element.
	Also, $\varphi$ is clearly surjective. 
	
	Let us now show injectivity. Since $A_k$ is a free $\bN$-module,
	it is enough to do so after tensoring with $\bQ$. Consider the diagram
	$$
	\xymatrix{
	\bQ[x_{0,0},\dots,x_{k,0}] \ar@{>->}^-{\text{incl}}[r] & \bQ\tn_\bN A_k \ar^-{\id\tn\varphi}[r] &
		\bQ\tn_\bN\Lambda_{\bN,(p),k}.
	}
	$$	
	Observe that the composition is injective because it sends the $x_{i,0}$ to the $\psi_p^{\circ i}$, which 
	are algebraically independent elements of $\bQ\tn_\bZ\Lambda_{\bZ,(p),k}$ 
	(and in fact are free $\bQ$-algebra generators). Therefore it
	is enough to show that the first inclusion is an equality. That is, 
	it is enough to show that the elements
	$x_{0,0},\dots,x_{k,0}$ generate $\bQ\tn_\bN A_k$ as a $\bQ$-algebra. 
	
	This is follows directly from the relations $x_{i,j}^p=x_{i+1,j}+px_{i,j+1}$. Indeed they imply that,
	for any $j$, if all $x_{i,j}$ lie in a given sub-$\bQ$-algebra, then so do all $x_{i,j+1}$.
	Therefore, the sub-$\bQ$-algebra generated by all $x_{i,0}$ contains all $x_{i,1}$, and hence all 
	$x_{i,2}$, and so on. Therefore it consists of all of $\bQ\tn_\bN A$.
\end{proof}

\begin{proposition}
	\label{pro:p-typical-lambda-k-is-free}
	\begin{enumerate}
		\item $\lttn{k}$ is free as an $\bN$-module.
		\item It is an $\bN$-model for $\Lambda_{\bZ,(p),k}$ as a co-$\bN$-algebra object in a 
			unique way.
		\item We have $\lttn{k}\circ\lttn{k'}\subseteq \lttn{k+k'}$.
	\end{enumerate}
\end{proposition}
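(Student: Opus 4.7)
The plan is to use the presentation of $\lttn{k}$ as $A_{k,k}$ from Lemma \ref{lem:p-typical-model} throughout, so that all three parts reduce to calculations with the generators $x_{i,j}\leftrightarrow\psi_p^{\circ i}\circ d_p^{\circ j}$ for $i+j\leq k$, modulo the relations $x_{i,j}^p=x_{i+1,j}+p\,x_{i,j+1}$ for $i+j<k$.

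For part (1), Lemma \ref{lem:p-typical-model} already exhibits $\lttn{k}$ as free of rank $p^{k(k+1)/2}$ over its top-row subring $\bN[\psi_p^{\circ i}\circ d_p^{\circ j}\mid i+j=k]$, and the latter is a free polynomial $\bN$-algebra on $k+1$ variables, hence free as an $\bN$-module on monomials. Tensoring these two freeness statements yields an explicit $\bN$-basis of $\lttn{k}$ consisting of the monomials $\prod_{i+j\leq k}x_{i,j}^{m_{i,j}}$ with $m_{i,j}<p$ whenever $i+j<k$ (and $m_{i,j}\in\bN$ arbitrary when $i+j=k$).

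For part (2), freeness from (1) gives flatness. The identification $\bQ\tn_\bN\lttn{k}=\Lambda_{\bZ,(p),k}\tn_\bZ\bQ$ follows from the $\bQ$-level computation in the proof of Lemma \ref{lem:p-typical-model}, and the integral equality $\bZ\tn_\bN\lttn{k}=\Lambda_{\bZ,(p),k}$ is established by induction on $k$: the Witt--Frobenius relation $p^j\theta_{p^j}=\psi_{p^j}-\sum_{l<j}p^l\theta_{p^l}^{p^{j-l}}$ is iteratively solved using the $\bZ$-lifted relation $x_{i,l+1}=(x_{i,l}^p-x_{i+1,l})/p$, and one checks by tracking $p$-adic valuations that the resulting expression has integer coefficients. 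Both $\eps^+$ and $\eps^\times$ vanish on $d_p$ and $\psi_p$, so they land in $\bN$. Since $\Delta^+$ and $\Delta^\times$ are ring homomorphisms and $\lttn{k}$ is $\bN$-algebraically generated by the $\psi_p^{\circ i}\circ d_p^{\circ j}$, it suffices to verify the containments on these generators; and because $\Delta^\pm\circ\psi_p=(\psi_p\tn\psi_p)\circ\Delta^\pm$ for the Adams endomorphism, the problem further reduces to showing $\Delta^+(d_p^{\circ j})$ and $\Delta^\times(d_p^{\circ j})$ lie in $\lttn{j}\tn_\bN\lttn{j}$. The crux is $j=1$: the formula for $\Delta^\times(\theta_p)$ recorded in (\ref{subsec:non-models}), translated via $d_p=-\theta_p$, contains a negative term $-p\,d_p\tn d_p$, but substituting the defining relation $e^p=\psi_p+p\,d_p$ absorbs the sign to yield
\begin{equation*}
\Delta^\times(d_p)=d_p\tn\psi_p+\psi_p\tn d_p+p\,d_p\tn d_p\in\lttn{1}\tn\lttn{1};
\end{equation*}
meanwhile $\Delta^+(d_p)=d_p\tn 1+1\tn d_p+\sum_{i=1}^{p-1}\tfrac{1}{p}\tbinom{p}{i}\,e^i\tn e^{p-i}$ is already manifestly positive since the Witt binomials $\tfrac{1}{p}\tbinom{p}{i}$ are integers. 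For $j\geq 2$, one applies $\Delta^\pm$ to $d_p^{\circ j}=((d_p^{\circ(j-1)})^p-\psi_p\circ d_p^{\circ(j-1)})/p$ and proceeds by induction using the same mod-$p$ cancellation. Uniqueness of the model is then automatic from (\ref{subsec:flat-models}).

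For part (3), composition is associative and is an $\bN$-algebra map in the first slot, so it suffices to prove $\psi_p\circ g\in\lttn{k'+1}$ and $d_p\circ g\in\lttn{k'+1}$ for $g\in\lttn{k'}$; iterating this then gives $\lttn{k}\circ\lttn{k'}\subseteq\lttn{k+k'}$ because the generators $\psi_p^{\circ i}\circ d_p^{\circ j}$ of $\lttn{k}$ are iterated compositions of $\psi_p$ and $d_p$. The first is immediate since $\psi_p$ is both additive and multiplicative and shifts $\psi_p^{\circ i}\circ d_p^{\circ j}\mapsto\psi_p^{\circ(i+1)}\circ d_p^{\circ j}$. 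The second follows from the identities
\begin{align*}
d_p\circ(g_1+g_2)&=d_p\circ g_1+d_p\circ g_2+\sum_{i=1}^{p-1}\tfrac{1}{p}\tbinom{p}{i}g_1^ig_2^{p-i},\\
d_p\circ(g_1g_2)&=g_1^p(d_p\circ g_2)+(\psi_p\circ g_2)(d_p\circ g_1),
\end{align*}
both manifestly $\bN$-positive, which reduce the claim to the base case $d_p\circ(\psi_p^{\circ i}\circ d_p^{\circ j})=\psi_p^{\circ i}\circ d_p^{\circ(j+1)}$. The main obstacles will be the $\bZ$-level equality in part (2) and the inductive sign-cancellation for $\Delta^\pm(d_p^{\circ j})$, both of which require careful $p$-adic bookkeeping to ensure that the formal divisions by $p$ produce nonnegative integer-coefficient expressions in the $x_{i,j}$ basis at each step.
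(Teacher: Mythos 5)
Your parts (1) and (3) are essentially correct, and part (3) is a legitimate alternative to the paper's argument. The paper instead observes that $\lttn{k}$ is the image of a surjection $(\lttn{1})^{\bcp k}\to\Lambda_{\bZ,(p),k}$ of co-$\bN$-algebra objects, $f_1\bcp\cdots\bcp f_k\mapsto f_1\circ\cdots\circ f_k$, and then reads off both the coproduct containments of part (2) and the composition containment of part (3) formally, with no computation beyond the case $k=1$. Your reduction of $f\circ g$ to the two cases $\psi_p\circ g$ and $d_p\circ g$ via the sum and product rules for $d_p$ is the same idea carried out by hand, and it does work; note, however, that it must be established \emph{before} your part (2), since your induction for $\Delta^\pm(d_p^{\circ j})$ consumes the containments $\psi_p\circ\lttn{j-1}\subseteq\lttn{j}$ and $d_p\circ\lttn{j-1}\subseteq\lttn{j}$.

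The genuine gap is the step you yourself flag: for $j\geq 2$ you assert that $\Delta^\pm(d_p^{\circ j})=\frac1p\bigl(\Delta^\pm(d_p^{\circ(j-1)})^p-(\psi_p\tn\psi_p)\cdot\Delta^\pm(d_p^{\circ(j-1)})\bigr)$ is $\bN$-positive ``by the same mod-$p$ cancellation,'' without exhibiting the cancellation; as written this is an unproved claim, not a proof. It can be completed. Write $\Delta^\pm(d_p^{\circ(j-1)})=\sum_a c_a\,u_a\tn v_a$ with $c_a\in\bN$ and $u_a\tn v_a$ distinct basis tensors of $\lttn{j-1}\tn\lttn{j-1}$. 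In the $p$-th power the off-diagonal multinomial coefficients are divisible by $p$, and for the diagonal terms one substitutes $u_a^p=e^p\circ u_a=\psi_p\circ u_a+p\,(d_p\circ u_a)$ and likewise for $v_a$, obtaining
\begin{align*}
\tfrac1p\bigl(c_a^p\,u_a^p\tn v_a^p-c_a\,U_a\tn V_a\bigr)
&=\tfrac{c_a^p-c_a}{p}\,U_a\tn V_a
 +c_a^p\bigl(U_a\tn E_a+D_a\tn V_a+p\,D_a\tn E_a\bigr),
\end{align*}
where $U_a=\psi_p\circ u_a$, $D_a=d_p\circ u_a$, $V_a=\psi_p\circ v_a$, $E_a=d_p\circ v_a$; this lies in $\lttn{j}\tn_\bN\lttn{j}$ by Fermat's little theorem and the part-(3) sublemma. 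Two smaller points: the same unexecuted bookkeeping affects your claim $\bZ\tn_\bN\lttn{k}=\Lambda_{\bZ,(p),k}$, where the paper simply quotes the standard fact (Cartier--Dieudonn\'e--Dwork) that $\Lambda_{\bZ,(p),k}=\bZ[e,d_p,\dots,d_p^{\circ k}]$, all of whose generators visibly lie in $\lttn{k}$; and $\eps^\times(\psi_p)=1$ rather than $0$, though the conclusion $\eps^\times(\lttn{k})\subseteq\bN$ is unaffected.
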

\begin{proof}
	(1): This follows from~(\ref{lem:p-typical-model}).
	
	(2): The induced map $\bZ\tn_\bN\lttn{k}\to\Lambda_{\bZ,(p),k}$ is an injection, and it is a 
	surjection because $\Lambda_{\bZ,(p),k}$
	is generated as a $\bZ$-algebra by $e,d_p,\dots,d_p^{\circ k}$, all of which are contained
	in $\lttn{k}$. Therefore $\lttn{k}$ is a model for $\Lambda_{\bZ,(p),k}$ as an algebra.
	
	To show it is a model as a co-$\bN$-algebra object, it is enough to show 
		\begin{equation}
			\label{eq:local-123498}
		\Delta(x)\in \lttn{k}\tn_\bN\lttn{k}
		\end{equation}
	as $x$ runs over a set of $\bN$-algebra generators
	for $\lttn{k}$, and as $\Delta$ runs over the two coproducts $\Delta^+,\Delta^\times$.

	First consider the case $k=1$, where we have the generators $e,\psi_p,d_p$.
	Since $e$ and $\psi_p$ are both $\bN$-algebra-like elements, (\ref{eq:local-123498})
	holds for them. For $d_p$, it follows from the positivity of the coefficients in the 
	following equalities:
		\begin{align*}
			\Delta^+(d_p) &= d_p\tn 1 + 1\tn d_p + e\tn e \\
			\Delta^\times(d_p) &= d_p\tn\psi_p + \psi_p\tn d_p + pd_p\tn d_p.
		\end{align*}

 	Now consider the general case. Since $\lttn{1}$ is a co-$\bN$-algebra object in $\Alg{\bN}$,
	we can form $(\lttn{1})^{\bcp k}$. Consider the map
		$$
		(\lttn{1})^{\bcp k}\longmap (\Lambda_{\bZ,(p),1})^{\bcp k} \longmap \Lambda_{\bZ,(p),k}
		$$
	of co-$\bN$-algebra objects defined by 
	$f_1\bcp\cdots\bcp f_{k}\mapsto f_1\circ\cdots\circ f_{k}$.	Then its image
	is equal to $\lttn{k}$. Indeed, $\lttn{1}$ is generated by $e,\psi_p,d_p$, all of which commute
	with each other under composition; so the image is the sub-$\bN$-algebra generated by all 
	$\circ$-words in $e,\psi_p,d_p$ of length $k$, which is $\lttn{k}$. Thus we have a 
	surjection
		\begin{equation}
			\label{eq:local-598258}
		(\lttn{1})^{\bcp k}\longmap \lttn{k}
		\end{equation}
	of co-$\bN$-algebra objects, and hence a diagram
		$$
		\xymatrix{
		(\lttn{1})^{\bcp k}\ar[r]\ar^{\Delta}[d] 
			& \Lambda_{\bZ,(p),k}\ar^{\Delta}[d] \\
		(\lttn{1})^{\bcp k}\tn_\bN (\lttn{1})^{\bcp k} \ar[r] 
			& \Lambda_{\bZ,(p),k}\tn_\bN \Lambda_{\bZ,(p),k}
		}
		$$
	Therefore $\Delta(\lttn{k})$ is contained in the image of 
	$(\lttn{1})^{\bcp k}\tn_\bN(\lttn{1})^{\bcp k}$, which is contained in 
	$\lttn{k}\tn_\bN\lttn{k}$.
	Therefore $\lttn{k}$ is a model as a co-$\bN$-algebra object.
	
	(3): Because (\ref{eq:local-598258}) is surjective, $\lttn{k}\circ\lttn{k'}$ equals the image 
	of $$(\lttn{1})^{\bcp k}\bcp(\lttn{1})^{\bcp k'}\longmap\Lambda_{\bZ,(p)}$$ which is 
	$\lttn{k+k'}$.
\end{proof}

\subsection{$p$-typical Witt vectors and $\Lambda$-structures for semirings}
\label{subsec:p-typical-defs}
We can put everything above together in what should now be a familiar way.

Define 
	$$
	\Lambda_{\bN,(p)} := \colim_k \lttn{k}.
	$$
It is a flat model for $\Lambda_{\bZ,(p)}$ over $\bN$, as composition algebra. Write
	$$
	W_{(p)}(A) := \Alg{\bN}(\Lambda_{\bN,(p)},A)
	$$
for its Witt vector functor. The truncated version is
	$$
	W_{(p),k}(A) := \Alg{\bN}(\lttn{k},A),
	$$
and we have
	$$
	W_{(p)}(A) = \lim_k W_{(p),k}(A).
	$$
These Witt functors take values in $\bN$-algebras. If $A$ is a ring, then they
agree with the usual $p$-typical Witt vector rings.

Giving an action of $\Lambda_{\bN,(p)}$ on an $\bN$-algebra $A$ is equivalent to giving an 
$\bN$-algebra endomorphism $\psi_p\:A\to A$ and a set map $d_p\:A\to A$ satisfying the identities
	\begin{align}
		\label{eq:delta-semiring-axioms}
		\begin{split}
		\psi_p(d_p(x)) &= d_p(\psi_p(x)) \\
		\psi_p(x) + pd_p(x) &= x^p \\
		d_p(x+y) &= d_p(x) + d_p(y) + \sum_{i=1}^{p-1}\frac{1}{p}\binom{p}{i}x^i y^{p-i} \\
		d_p(xy) &= d_p(x)\psi_p(y) + \psi_p(x)d_p(y) + pd_p(x)d_p(y) \\
		d_p(0) &= 0 \\
		d_p(1) &= 0.
		\end{split}
	\end{align}

Observe that if $A$ is flat over $\bN$, then $\psi_p$ determines $d_p$, assuming it exists.
This is because additive cancellativity and $p$-cancellativity are flat-local properties,
by~(\ref{subsec:cancellative-modules}). Thus giving a $\Lambda_{\bN,(p)}$-structure on
a flat $\bN$-algebra is equivalent to  giving an $\bN$-algebra endomorphism $\psi_p$
lifting the Frobenius map on $\bZ/p\bZ\tn_\bN A$ and satisfying $\psi_p(x)\leq x^p$ for all $x$.
Here we write $a\leq b$ if there exists an element $c\in A$ such that $a+c=b$.

\subsection{Remark: a partition-like interpretation of the bases}

It is possible to give an interpretation of the bases of $\lttn{k}$ and $\Lambda_{\bN,(p)}$ in
the language of partitions. For simplicity of notation, let us write $\lttn{\infty}=\Lambda_{\bN,(p)}$
and hence allow $k=\infty$.

Over $\bZ$, there is a basis of $\Lambda_{\bZ,(p),k}$ given by monomials in
the $d_p^{\circ i}$ or the $\theta_{p^i}$, where $i\geq 0$. They can be indexed by usual partitions
$\lambda$ that are $p$-typical in the sense that all parts $\lambda_j$ are powers of $p$. For
example, one could use the family $\prod_j\theta_{\lambda_j}$. Equivalently, we could index them
by the $p$-typical multiplicity vectors $m\in\bigoplus_{i \geq 0} \bN$, where $m_i$ is the number
of $j$ such that $\lambda_j=p^i$.

On the other hand, the $\bN$-basis of $\lttn{k}$ consists of monomials 
	$$
	\prod_{i+j\leq k} (\psi_p^{\circ i}\circ d_p^{\circ j})^{m_{ij}},
	$$
such that when $i+j<k$, we have $m_{ij}<p$. Observe that such a monomial remains
a basis element in $\lttn{k+1}$ if and only if $m_{ij}< p$ when $i+j=k$.

We might think of the vector $m\in \bigoplus_{i,j\geq 0}\bN$ as the multiplicity vector of a
2-dimensional $p$-typical partition. Such a partition would be an expression of the form
$\sum_{i,j}m_{ij}p^{i+j}$. The basis for $\lttn{k}$ would then be indexed by all $2$-dimensional
$p$-typical partitions subject to the conditions that there are less than $p$
parts $(i,j)$ when $i+j<k$, and no parts $(i,j)$ when $i+j>k$.

\subsection{Relation to the multiple-prime theory}

Since $\psi_p,d_p\in\Lambda_\bN$, we have $\Lambda_{\bN,(p)}\subseteq\Lambda_\bN$.
This induces canonical algebra maps $W(A)\to W_{(p)}(A)$ for all $\bN$-algebras $A$.
In particular, for each $a\in A$, there is a Teichm\"uller lift $[a]\in W_{(p)}(A)$.
It is the image of the usual Teichm\"uller lift $[a]\in W(A)$.

On the other hand, we have $\Lambda_{\bN,(p)}\not\subseteq \Sch$, simply because
$\psi_p\not\in\Sch$. In particular, there is no functorial map $\Wsch(A)\to W_{(p)}(A)$ that agrees
with the usual one for rings.

\subsection{Some explicit descriptions of $W_{(p),k}(A)$}

The presentation~(\ref{map:lambda_p,k-presentation}) translates directly into finite 
descriptions of the Witt vectors of finite length:
	$$
	W_{(p),k}(A) = \bigsetof{(a_{i,j})\in 
		A^{\setof{(i,j)}{i+j\leq k}}}{a_{i,j}^p=a_{i+1,j}+pa_{i,j+1} \text{ for } i+j<k }.
	$$
For example, 
	$$
	W_{(p),1}(A) = \bigsetof{(a_{00},a_{10},a_{01})}{a_{00}^p=a_{10}+pa_{01}}
	$$
In general, $W_{(p),k}=\Spec(\lttn{k})$ is the locus in the $\bN$-scheme
$\bA^{\binom{k+2}{2}}_{\bN}$ defined by the $\binom{k+1}{2}$ relations in the algebra $A_k$ 
of~(\ref{subsec:p-typical-symmetric-functions}).

As usual with Witt vectors, the algebraic structure is not transparent when expressed in
coordinates. The simplest nontrivial example is $W_{(p),1}(A)$, where we have
	\begin{align*}
	(a_{00},a_{10},a_{01})+(b_{00},b_{10},b_{01}) 
		&= (a_{00}+b_{00},\, a_{10}+b_{10},\, a_{01}+b_{01}
			+\sum_{i=1}^{p-1}\frac{1}{p}\binom{p}{i}a_{00}^i b_{00}^{p-i}) \\
	(a_{00},a_{10},a_{01})(b_{00},b_{10},b_{01}) 
		&= (a_{00}b_{00},\, a_{10}b_{10},\, a_{10}b_{01} + a_{01} b_{10}+pa_{01}b_{01}) \\
	0 &= (0,0,0)\\
	1 &= (1,1,0).
	\end{align*}
This is just another expression of the formulas of (\ref{eq:delta-semiring-axioms}).

\subsection{$W_{(p),k}(A)$ when $A$ is contained in a ring}
In this case, we can ignore the relations in the presentation~(\ref{map:lambda_p,k-presentation})
and instead describe $W_{(p),k}(A)$ in terms of the usual $p$-typical Witt vector
ring $W_{(p),k}(\bZ\tn_\bN A)$ and
effectivity conditions corresponding to the generators $\psi_p^{\circ i}\circ d_p^{\circ j}$.
Indeed, a morphism $\lttn{k}\to A$ is equivalent to a morphism
$a\:\Lambda_{\bZ,(p),k}\to\bZ\tn_\bN A$ such that $a(\psi_p^{\circ i}\circ d_p^{\circ j})\in A$ for 
all $i,j$. Thus we have
	$$
	W_{(p),k}(A) = \bigsetof{a\in W_{(p),k}(\bZ\tn_\bN A)}
		{a(\psi_p^{\circ i}\circ d_p^{\circ j})\in A\text{ for } i+j\leq k}.
	$$
For instance, if $A$ is contained in a $\bQ$-algebra, then this permits a recursive description in
terms of ghost components: 
$W_{(p),k}(A)$ is the set of ghost vectors $\langle{a_0,\dots,a_k}\rangle\in A^{k+1}$ satisfying 
the following property:
	$$
	\langle{a_1,\dots,a_{k}}\rangle,\ 
	\Big\langle{\frac{a_0^p-a_1}{p},\dots,\frac{a_{k-1}^p-a_k}{p}}\Big\rangle \in W_{(p),k-1}(A).
	$$
Thus the conditions are that for all $i\geq 0$, the elements
	\begin{multline*}
	\frac{a_i^p-a_{i+1}}{p},\quad  \frac{1}{p}\Big(\frac{a_i^p-a_{i+1}}{p}\Big)^p 
		-\frac{1}{p}\Big(\frac{a_{i+1}^p-a_{i+2}}{p}\Big),\\ 
	\frac{1}{p}\Bigg(\frac{1}{p}\Big(\frac{a_i^p-a_{i+1}}{p}\Big)^p 
		-\frac{1}{p}\Big(\frac{a_{i+1}^p-a_{i+2}}{p}\Big)\Bigg)^p
	- \frac{1}{p}\Bigg(\frac{1}{p}\Big(\frac{a_{i+1}^p-a_{i+2}}{p}\Big)^p 
		-\frac{1}{p}\Big(\frac{a_{i+2}^p-a_{i+3}}{p}\Big)\Bigg)
	\end{multline*}
and so on lie in $A$.	

For example, $W_{(p),1}(\bRpl)$ is the set of ghost vectors $\langle{x,y}\rangle$ in $\bRplex{2}$ 
with $y \leq x^p$. Similarly one can show that $W_{(p),2}(\bRpl)$
is the set of ghost vectors in $\bRplex{3}$ of the form $\langle{a,a^p x,a^{p^2}y}\rangle$, where
	$$
	x\leq 1 \quad \text{ and } \quad 0 \leq x^p-y \leq \frac{(1-x)^p}{p^{p-1}}.
	$$
Thus the pre-image of $1$ under the projection $W_{(p),2}(\bRpl)\to\bRpl$ onto the first coordinate 
is the 2-simplex bounded by the curves $y=x^p$, $y=x^p-(1-x)^p/p^{p-1}$ and $y=0$  
in the $xy$-plane.

\subsection{Counterexample: The canonical map $W_{(p),k+1}(A)\to W_{(p),k}(A)$ is not generally 
surjective}

It is surjective when $A$ is a ring or when $k=0$. But when $k\geq 1$ and $A$ is general, it is not. It is
enough to check this in the universal case, when $A=\lttn{k}$. In other words, it is enough to show the
inclusion $\lttn{k}\to\lttn{k+1}$ has no retraction in the category $\Alg{\bN}$. So suppose $\varphi$ is such a
retraction. By~(\ref{eq:local-485}), we have 
	$$
	(\psi_p^{\circ i}\circ d_p^{\circ j})^p 
		= \psi_p^{\circ(i+1)}\circ d_p^{\circ j} + p\psi_p^{\circ i}\circ d_p^{\circ (j+1)}.
	$$
Now suppose $i+j=k$. Then since $\varphi$ is a retraction, we have
	$$
	(\psi_p^{\circ i}\circ d_p^{\circ j})^p = \varphi(\psi_p^{\circ i}\circ d_p^{\circ j})^p
		= \varphi(\psi_p^{\circ(i+1)}\circ d_p^{\circ j}) + p\varphi(\psi_p^{\circ i}\circ d_p^{\circ (j+1)}).
	$$
But by~(\ref{lem:p-typical-model}), the left-hand side is additively indecomposable in $\lttn{k}$, 
and so both terms on the right-hand side vanish. So we have
$\varphi(\psi_p^{\circ i}\circ d_p^{\circ (j+1)})=0$ whenever $i+j=k$. Taking $i=k-1$ and $j=1$ gives
	$$
	\varphi(\psi_p^{\circ (k-1)}\circ d_p)^p = 
		\varphi(\psi_p^{\circ k}\circ d_p) 
		+ p\varphi(\psi_p^{\circ (k-1)}\circ d_p^{\circ 2}) = 0.
	$$
But this is impossible because $\varphi$ is a retraction.

\subsection{Semirings and the infinite prime}
\label{subsec:positivity-philosophy}

In the theory of $\Lambda$-rings, a finite prime $p$ allows us to speak of two things: $p$-adic integrality,
which is a property, and Frobenius lifts at $p$, which are structures. The fundamental point of this chapter
is that it is reasonable for some purposes to think of positivity as $p$-adic integrality at the place
$p=\infty$. So the infinite prime plays the first role here but not the second. This is meager when compared
to the rich role the infinite prime plays elsewhere in number theory, such as the theory of automorphic
forms, but our approach does have the virtue that it allows us to treat the first role purely algebraically,
and hence scheme-theoretically, as we can for finite primes.

But it is natural to wonder whether there is some analogue of the second role for $p=\infty$ and
whether there is an $\infty$-typical theory that can be isolated from the rest of the primes.
One might hope that flows will appear here.

\section{On the possibility of other models}

So far, we have not been concerned with whether our $\bN$-models are the most
natural ones---their existence has been interesting enough. The purpose of this short section
is to raise some questions in this direction.

As discussed in~(\ref{subsec:representation-theory}), the connection between symmetric functions and arithmetic
algebraic geometry is explained by Wilkerson's theorem, which we interpret as saying that $\Lambda_\bZ$ is the
composition ring that controls commuting Frobenius lifts. It is natural to ask whether there are similar,
arithmetically satisfying descriptions of the composition algebras over $\bN$ we have considered. As explained
in (\ref{subsec:p-typical-defs}), there is a such a description in the $p$-typical case. It would be
interesting to find one for $\Lambda_\bN$ or $\Sch$. A less satisfying alternative would be to
single out $\Lambda_\bN$ and $\Sch$ among all $\bN$-models by some general properties, and at least this form
of the question admits a precise expression:

\begin{question}
Are $\Lambda_\bN$, $\Sch$, and possibly the $B_p$ (of question~\ref{ques:p-poly-func} 
in~\ref{subsec:representation-theory}) the only flat models for $\Lambda_\bZ$ over $\bN$?
If not, is $\Sch$ the minimal one? Is $\Lambda_\bN$ the maximal one?
Is $\Lambda_{\bN,(p)}$ the only flat model for $\Lambda_{\bZ,(p)}$ over $\bN$?
\end{question}

Whether we ask for models as composition algebras or models as co-$\bN$-algebra objects, I do not
know the answer. I do not even know the answer to analogous questions about integrality at the 
finite primes. For instance, is $\Lambda_{\bZ,(p)}$ the maximal  integral model for 
$\bZ[1/p]\tn_\bZ\Lambda_{\bZ,(p)}$?

\begin{question}
Over $\bQpl$, there is another model for $\Lambda_\bQ$, namely $\Psi_\bQpl$. Is there still another?
\end{question}

\begin{question}
Let $K$ be a number field, and let $T$ be a set of embeddings $K\to\bR$. Do the composition 
algebras $\Lambda_{\sO_K,E}$ of~(\ref{subsec:lambda-over-number-fields}) have models over the 
sub-$\bN$-algebra of $\sO_K$ consisting of elements that are nonnegative under all $\sigma\in T$?
\end{question}

We have seen that if $K=\bQ$ and $T$ consists of the unique embedding, the answer is yes in two
cases: when $E$ consists of all maximal ideals of $\bZ$ or when it consists of only one. I do not
know the answer in any other case, unless $T$ or $E$ is empty.

\section{$k$-Schur functions and truncated Witt vectors}
\label{sec:k-Schur}

Let $\Lambda_{\bZ,k}$ denote $\bZ[h_1,\dots,h_k]$. Thus, in the notation 
of~(\ref{subsec:truncation-sets}), we have $\Lambda_{\bZ,k}=\Lambda_{\bZ,S}$, where $S$ is
the truncation set $\{1,2,\dots,k\}$. The purpose of this section is to show how $k$-Schur
functions, a recent development in the theory of symmetric functions, allow us to give an
$\bN$-model $\Sch_k$ for $\Lambda_{\bZ,k}$ which approaches $\Sch$ as $k$ tends to infinity.
Unfortunately, $\Sch_k$ is only a model as a co-$\bN$-module object, and not as a co-$\bN$-algebra
object. This would seem to be fatal for any application of $k$-Schur functions to Witt vectors as
objects of arithmetic algebraic geometry. But they do have several properties that are good from
the point of view of Witt vectors, and there are several parallels with the $p$-typical
$\bN$-models of section~\ref{sec:p-typical-model}. The purpose of this humble section is just to enter the
details into the literature, in case they can be of use to anyone else.

\subsection{$k$-Schur functions and $\Sch_{k}$}
It is not possible to make a $\bZ$-basis for $\Lambda_{\bZ,k}$ out of usual Schur functions. This
is because there are only finitely many Schur functions in any given $\Lambda_{\bZ,k}$. But
Lapointe--Lascoux--Morse~\cite{Lapointe-Lascoux-Morse:tableau-atoms} discovered certain symmetric
functions that form a basis for $\Lambda_{\bZ,k}$ and are similar to Schur functions in many ways.
They call them \emph{$k$-Schur}\index{Schur function!Schur function@$k$-Schur function} functions and denote them $s^{(k)}_\lambda$, where $\lambda$ runs
over all partitions $(\lambda_1,\dots)$ which are \emph{$k$-bounded}\index{partition!$k$-bounded} in the sense that $\lambda_1\leq k$.

Our reference for $k$-Schur functions will be the book~\cite{Lam-et-al:k-Schur-book} by Lam
\emph{et al.}, in particular part 2, which was written by Morse, Schilling, and Zabrocki and 
is based on lectures by Lapointe and Morse. They
consider more than one definition of $k$-Schur function but conjecture that they
all agree. For definiteness, we will take
	$$
	s^{(k)}_\lambda:=s^{(k)}_\lambda[X;1]
	$$ 
as our definition, where $s^{(k)}_\lambda[X;t]$ is defined there in equation (3.16), p.\ 
81. See pp.\ 83--84 for a discussion of the relations with the other definitions.

Define
	$$
	\Sch_{k} := \bigoplus_\lambda \bN s^{(k)}_\lambda.
	$$
As explained in part 2, section 4.5 of~\cite{Lam-et-al:k-Schur-book}, the family 
$s^{(k)}_\lambda$ forms a
$\bZ$-basis for $\Sch_{k}$. In other words, $\Sch_k$ is free over $\bN$ and is a model for 
$\Lambda_{\bZ,k}$ over $\bN$ as a module.

\begin{proposition}
\label{pro:k-Schur-facts}
For $k\geq 0$, we have
	\begin{enumerate}
	\item $\Sch_{k}$ is a model for $\Lambda_{\bZ,k}$ as a co-$\bN$-module object 
		in $\Alg{\bN}$,
	\item $s^{(k)}_\lambda=s_\lambda$, if $k\geq \lambda_1+l-1$, where $l$ is the length of 
		$\lambda$,
	\item $\Sch_{k}\subseteq \Sch_{k+1}$ and $\Sch_{k}\subseteq \Sch$,
	\item $\Sch_k$ is finitely presented as an $\bN$-algebra.
	\end{enumerate}
\end{proposition}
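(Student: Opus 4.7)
Plan: I would prove the four parts in the order (2), (3), (1), (4), leveraging positivity and stability theorems for $k$-Schur functions from \cite{Lam-et-al:k-Schur-book}. Part (2) is the standard stability theorem for $k$-Schur functions at $t=1$: when $\lambda_1+l-1\leq k$, the shape $\lambda$ is small enough relative to $k$ that the $k$-tableau combinatorics defining $s^{(k)}_\lambda[X;t]$ at $t=1$ collapses to ordinary semistandard tableau combinatorics, yielding $s^{(k)}_\lambda=s_\lambda$. I would simply cite the relevant proposition in Part~2 of \cite{Lam-et-al:k-Schur-book}.

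For part (3), both containments reduce to known positivity of $k$-Schur expansions. The containment $\Sch_k\subseteq\Sch_{k+1}$ follows from an identity of the form
\[
s^{(k)}_\lambda \ = \ \sum_\mu c^{k,k+1}_{\lambda\mu}\,s^{(k+1)}_\mu, \qquad c^{k,k+1}_{\lambda\mu}\in\bN,
\]
a standard $k$-Schur positivity result. The containment $\Sch_k\subseteq\Sch$ follows similarly: iterating the previous expansion and invoking (2) shows every $s^{(k)}_\lambda$ is a nonnegative integer combination of ordinary Schur functions.

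For part (1), the conditions of~(\ref{subsec:flat-models}) (with $L=\bN$ and dropping the $\Delta^\times,\eps^\times$ conditions) reduce to: (a) $\bZ\tn_\bN\Sch_k\to\Lambda_{\bZ,k}$ is a bijection, (b) $\Delta^+(\Sch_k)\subseteq\Sch_k\tn_\bN\Sch_k$, and (c) $\eps^+(\Sch_k)\subseteq\bN$. Condition (a) is immediate from the basis property. Condition (c) follows from (3): we have $\Sch_k\subseteq\Sch\subseteq\Lambda_\bN$ by~(\ref{pro:Schur-is-a-model}), so $\eps^+$ evaluates at $x_i=0$ to a nonnegative integer. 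The heart of (1) is condition (b), the nonnegativity of the structure constants in
\[
\Delta^+\bigl(s^{(k)}_\lambda\bigr) \ = \ \sum_{\mu,\nu} a^\lambda_{\mu\nu}\, s^{(k)}_\mu\tn s^{(k)}_\nu;
\]
this is the $k$-analogue of the Littlewood--Richardson positivity of~(\ref{eq:LW-coeffs}), and I would extract it from the $k$-Schur positivity results of \cite{Lam-et-al:k-Schur-book}.

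Part (4) is where I expect the main obstacle. My plan is to exhibit an explicit finite $\bN$-algebra generating set, such as a finite family of $k$-Schur functions (perhaps including the $k$-rectangles $s^{(k)}_{(i^{k-i+1})}$ together with low-weight generators), and to use the $k$-Pieri rule to rewrite every other $s^{(k)}_\lambda$ as an $\bN$-polynomial in them. For \emph{finite presentability} one then needs a finite set of relations whose $\Mod{\bN}$-equivalence closure captures all relations, which is strictly more than the $\bZ$-ideal-theoretic statement. Here the absence of subtraction in $\Mod{\bN}$-equivalence relations is delicate: over $\bZ$, finite generation of the ideal of relations is automatic from Hilbert's basis theorem once the algebra is finitely generated, but over $\bN$ one must exhibit a finite list of generating equalities, which will require careful bookkeeping with the $k$-Pieri structure coefficients.
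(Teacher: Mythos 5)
Your handling of parts (2) and (3) matches the paper's: (2) is the stability property of $k$-Schur functions (property 39 of Lapointe--Morse~\cite{Lapointe-Morse:k-tableau}), and (3) is obtained exactly as you describe, from the positivity of the $k\to k+1$ branching together with (2). But there are two genuine gaps. First, in part (1) your checklist omits condition (1) of~(\ref{subsec:flat-models}): for $\Sch_k$ to be a co-$\bN$-module object \emph{in} $\Alg{\bN}$ it must in particular be a sub-$\bN$-\emph{algebra} of $\Lambda_{\bZ,k}$, i.e., closed under multiplication. That is the positivity of the $k$-Littlewood--Richardson coefficients, a deep theorem and one of the two corollaries (8.1 and 8.2) of Lam~\cite{Lam:Schubert-polys-and-affine-Grass} cited in the paper --- the other being the coproduct positivity that you do address. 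Without multiplicative closure, neither (1) nor (4) is even well posed.

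Second, part (4) in your write-up is an acknowledged plan rather than a proof, and the difficulty you correctly flag --- that over $\bN$ one must exhibit a finite generating set of the \emph{congruence}, not merely of an ideal --- is exactly what your sketch does not resolve. The paper's resolution rests on a specific structural fact coming from the multiplication rule of theorem 40 of~\cite{Lapointe-Morse:k-tableau}: the $k$ $k$-rectangular $k$-Schur functions generate a free polynomial sub-$\bN$-algebra $B$ of $\Sch_k$, and $\Sch_k$ is a \emph{free} $B$-module on the $k!$ $k$-irreducible $k$-Schur functions. Writing each of the $\binom{k!+1}{2}$ products of pairs of irreducibles as a $B$-linear combination of irreducibles yields a presentation with $k!+k$ generators and $\binom{k!+1}{2}$ quadratic relations: every word in the generators reduces modulo these relations to a $B$-linear combination of the $k!$ module generators, and the freeness of $\Sch_k$ as a $B$-module shows the induced surjection from the finitely presented algebra onto $\Sch_k$ is injective, so the finite relation list generates the entire kernel congruence. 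This free-module structure is the missing idea; appealing to the $k$-Pieri rule and ``careful bookkeeping'' will not by itself produce a finite list of relations whose $\Alg{\bN}$-congruence closure is everything.
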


\begin{proof}
These are mostly just restatements of results collected in part 2, chapter 4 of the 
book~\cite{Lam-et-al:k-Schur-book}.

(1): This follows from corollaries 8.1 and 8.2 of Lam~\cite{Lam:Schubert-polys-and-affine-Grass}.
See sections 4.7 and 4.8 of part 2 of~\cite{Lam-et-al:k-Schur-book}.

(2): This is property 39 of Lapointe--Morse~\cite{Lapointe-Morse:k-tableau}.
The result under the stronger assumption $k\geq\sum_i\lambda_i$ is discussed in part 2, section 4.1 
of~\cite{Lam-et-al:k-Schur-book}. 

(3): The first statement is theorem 2 of Lam--Lapointe--Morse--Shimozono~\cite{LLMS:k-shape}.
See section 4.10 of part 2 of~\cite{Lam-et-al:k-Schur-book}.
The second statement follows from the first, together with part (2).

(4): This follows from the multiplication rule established in theorem 40 of
Lapointe--Morse~\cite{Lapointe-Morse:k-tableau}. See section 4.6 of part 2
of~\cite{Lam-et-al:k-Schur-book}. As a module over the $\bN$-algebra generated by the $k$
$k$-rectangular $k$-Schur functions, $\Sch_k$ is freely generated by the $k!$ $k$-irreducible
$k$-Schur functions. Further, the $k$-rectangular $k$-Schur functions are algebraically
independent. So $\Sch_k$ can be generated by $k!+k$ elements with $\binom{k!+1}{2}$ quadratic
relations.
\end{proof}

\subsection{Remark} 
There are some similarities between $\Sch_k$ and $\lttn{k}$. Compare the preceding
proposition with section~\ref{sec:p-typical-model}, and especially the presentation of $\Sch_k$
mentioned in the proof of (4) above with the presentation of~(\ref{map:lambda_p,k-presentation}).

\subsection{Truncated Schur--Witt vectors for semirings}
For any $\bN$-algebra $A$, define
	$$
	\Wsch_k(A):=\Alg{\bN}(\Sch_k,A).
	$$
It follows from~(\ref{pro:k-Schur-facts})(4) that $\Wsch_k(A)$ can be described as the subset of a 
finite-dimensional affine space $A^N$ satisfying a finite list of equations. 

By~(\ref{pro:k-Schur-facts})(2)--(3), we have $\Sch = \colim_k\Sch_k$ 
and $\Wsch(A) = \lim_k \Wsch_k(A)$.

It follows from~(\ref{pro:k-Schur-facts})(1) that $\Wsch_k(A)$ inherits an $\bN$-module structure
and that $\Wsch_k(A)=W_k(A)$ when $A$ is a ring.
Unlike in the case when $A$ is a ring, $\Wsch_k(A)$ does not generally inherit an $\bN$-algebra structure. 

\subsection{Counterexample: $\Sch_k$ is not a co-$\bN$-algebra object}
\label{subsec:Sch_k-not-a-co-algebra}

It is for $k\leq 2$, but as Luc Lapointe informed me, we have
$\Delta^\times(s^{(3)}_{22})\not\in\Sch_3\tn\Sch_3$, and so it fails for $k=3$.
This can be checked by hand using the following equalities:
\begin{align*}
	12s^{(3)}_{22} &= \psi_1^4 + 3\psi_2^2 -4\psi_1\psi_3 \\
	\psi_1^4 &= s^{(3)}_{1111} + 2s^{(3)}_{211} + 2s^{(3)}_{22} +s^{(3)}_{31} \\
	\psi_2^2 &= s^{(3)}_{1111} - 2s^{(3)}_{211} + 2s^{(3)}_{22} +s^{(3)}_{31} \\
	\psi_1\psi_3 &= s^{(3)}_{1111} - s^{(3)}_{211} - s^{(3)}_{22} +s^{(3)}_{31}.
\end{align*}
On the other hand, $\Delta^{\times}(s^{(3)}_{22})$ is contained in $\Sch_4\tn\Sch_4$. This is just
because it is an $\bN$-linear combination of elements of the form $s_\lambda\tn s_\mu$, where
$\lambda$ and $\mu$ are partitions of $2+2=4$; so we have $s_\lambda=s^{(4)}_\lambda$ and
$s_\mu=s^{(4)}_\mu$ for all $\lambda,\mu$ in question.

It is also not true that $\Sch_k\circ\Sch_{l}\subseteq\Sch_{kl}$ for all $k,l$. According to my
computations, it is true if $k,l\leq 3$ and $(k,l)\neq (3,3)$. But for $k=l=3$, it fails: the
coefficient of $s^{(9)}_{6331111}$ in $s^{(3)}_{22}\circ s^{(3)}_{22}$ is $-1$. In fact, we have
$\Sch_3\circ\Sch_{3}\not\subseteq\Sch_{11}$ and $\Sch_3\circ\Sch_{3}\subseteq\Sch_{12}$.

\section{Remarks on absolute algebraic geometry}

This volume is a collection of contributions on the theme of the mythical field with one element.
One can see this chapter from that point of view, although I have so far avoided making the
connection. There are two natural approaches to rigidifying the category of rings---one can look
for models over $\bN$, or one can add structure, such as a $\Lambda_\bZ$-ring structure, which we
think of as descent data to the absolute point~\cite{Borger:LRFOE}. In this chapter, we have
combined the two. I do not have much more to say about the philosophy of the field with one element
than I already have said in~\cite{Borger:LRFOE}, but this way of thinking does suggest some
mathematical questions.

\begin{question}
Is it possible to extend the constructions $W$ and $\Wsch$ to non-affine $\bN$-schemes?
What about their adjoints $A\mapsto \Lambda_\bN\bcp A$ and $A\mapsto \Sch\bcp A$?
\end{question}

Over $\bZ$, this was done in my paper~\cite{Borger:BGWV-II}, but there are several complications over $\bN$.
The most important is that over $\bZ$ I used Witt vectors of finite length, because it is better to think of
$W(A)$ as a projective system of discrete rings, rather than actually taking the limit. But there is not yet
any finite-length version of the big Witt vector functor for $\bN$-algebras. On the other hand, we do have
finite-length $p$-typical functors for $\bN$-algebras available; so it is probably easier to make immediate
progress there.

A similar question is whether the concept of a $\Lambda_\bN$-structure or
$\Sch$-structure can be extended to nonaffine $\bN$-schemes. Over $\bZ$, this is done using
the functor $W^*=\colim_n W_n^*$, where $W_n^*$ is the extension of $W_n$ to nonaffine schemes.
(One could also use its right adjoint $W_*=\lim_n W_{n*}$.) So the
two questions are indeed closely related. The following question is a natural guide:

\begin{question}
Let $X$ be a $\Lambda_\bN$-scheme that is flat and locally of finite presentation 
over $\bN$. Does there exist a toric variety $Y$ over $\bN$ and a surjective $\Lambda_\bN$-morphism
$Y\to X$?
\end{question}

This requires some explanation. By a {\em toric variety over $\bN$}\index{toric!variety over $\bN$}, I mean an $\bN$-scheme that can be
formed by gluing together affine $\bN$-schemes of the form $\Spec(\bN[M])$, where $M$ is a
commutative monoid, along other schemes of the same form, where all the gluing maps are induced by
maps of the monoids. Surjectivity of a morphism of $\bN$-schemes can be understood in the sense of
the Zariski topos. Finally, while a $\Lambda_\bN$-structure has at the moment no precise meaning
for nonaffine $X$, it is possible to strengthen the question to a precise one that is still open.
For instance, we can require only that the base change of $X$ to $\bZ$ admit a
$\Lambda_\bZ$-structure.

\newpage
\frenchspacing
\bibliography{references}
\bibliographystyle{plain}

\printindex

\newpage\mbox{}\newpage
\end{document}